\documentclass[12pt]{amsart}



\usepackage{amssymb}
\usepackage{amssymb,amsmath,amsthm,enumerate,verbatim,bbm}
\usepackage{mathrsfs}

\usepackage{enumitem}

\usepackage{graphicx}

\makeatletter
\@namedef{subjclassname@2010}{%
  \textup{2010} Mathematics Subject Classification}
\makeatother

\usepackage[T1]{fontenc}


\newtheorem{theorem}{Theorem}[section]
\newtheorem{corollary}[theorem]{Corollary}
\newtheorem{lemma}[theorem]{Lemma}
\newtheorem{proposition}[theorem]{Proposition}



\theoremstyle{definition}
\newtheorem{definition}[theorem]{Definition}
\newtheorem{remark}[theorem]{Remark}
\newtheorem{example}[theorem]{Example}



\numberwithin{equation}{section}


\frenchspacing

\textwidth=13.5cm
\textheight=23cm
\parindent=16pt
\oddsidemargin=-0.5cm
\evensidemargin=-0.5cm
\topmargin=-0.5cm




\DeclareMathOperator*{\Ran}{\mathrm{Ran}}
\DeclareMathOperator*{\Ker}{\mathrm{Ker}}
\DeclareMathOperator*{\id}{\mathrm{id}}
\DeclareMathOperator*{\jacrad}{\mathrm{rad}}

\DeclareMathOperator*{\codim}{\mathrm{codim}}


\begin{document}


\baselineskip=17pt


\title[Surjective representations of $\mathcal{B}(X)$]{When are full representations of algebras of operators on Banach spaces automatically faithful?}

\author[B. Horv\'ath]{Bence Horv\'ath}
\address{Institute of Mathematics, Czech Academy of Sciences, \v{Z}itn\'a 25, 115 67 Prague 1, Czech Republic}
\email{horvath@math.cas.cz, b.horvath@lancaster.ac.uk}

\date{}

\begin{abstract}
We examine the phenomenon when surjective algebra homomorphisms between algebras of operators on Banach spaces are automatically injective. In the first part of the paper we shall show that for certain Banach spaces $X$ the following property holds: For every non-zero Banach space $Y$ every surjective algebra homomorphism $\psi: \, \mathcal{B}(X) \rightarrow \mathcal{B}(Y)$ is automatically injective. In the second part of the paper we consider the question in the opposite direction: Building on the work of Kania, Koszmider, and Laustsen \textit{(Trans. London Math. Soc., 2014)} we show that for every separable, reflexive Banach space $X$ there is a Banach space $Y_X$ and a surjective but not injective algebra homomorphism $\psi: \, \mathcal{B}(Y_X) \rightarrow \mathcal{B}(X)$.
\end{abstract}

\subjclass[2010]{Primary 46H10, 47L10; Secondary 46B03, 46B07, 46B10, 46B26, 47L20}

\keywords{Banach space, Semadeni space, bounded linear operator, ideal, semisimple, algebra homomorphism, automatically injective, SHAI property}

\maketitle

\section{Introduction and preliminaries}	

\subsection{Introduction}

A classical result of Eidelheit (see for example \cite[Theorem~2.5.7]{Dales}) asserts that if $X,Y$ are Banach spaces then they are isomorphic if and only if their algebras of operators $\mathcal{B}(X)$ and $\mathcal{B}(Y)$ are isomorphic as Banach algebras, in the sense that there exists a continuous bijective algebra homomorphism $\psi: \mathcal{B}(X) \rightarrow \mathcal{B}(Y)$. It is natural to ask whether for some class of Banach spaces $X$ this theorem can be strengthened in the following sense: If $Y$ is a non-zero Banach space and $\psi: \mathcal{B}(X) \rightarrow \mathcal{B}(Y)$ is a continuous, surjective algebra homomorphism, is $\psi$ automatically injective? 

It is easy find an example of a Banach space with this property. Indeed, let $X$ be a finite-dimensional Banach space, let $Y$ be a non-zero Banach space and let $\psi: \, \mathcal{B}(X) \rightarrow \mathcal{B}(Y)$ be a surjective algebra homomorphism. Since $\mathcal{B}(X) \simeq M_n(\mathbb{C})$ for some $n \in \mathbb{N}$, simplicity of $M_n(\mathbb{C})$ implies that $\Ker(\psi)= \lbrace 0 \rbrace$. One can also obtain an infinite-dimensional example: If $\mathcal{H}$ be a separable, infinite-dimensional Hilbert space, $Y$ is a non-zero Banach space, and let $\psi: \, \mathcal{B}(\mathcal{H}) \rightarrow \mathcal{B}(Y)$ is a surjective algebra homomorphism, then $\psi$ is automatically injective; see the paragraph before the proof of Theorem \ref{hilbshi}. These simple observations ensure that the following definition is not vacuous.

\begin{definition}
A Banach space $X$ has the \textit{SHAI property} (Surjective Homomorphisms Are Injective) if for every non-zero Banach space $Y$ every surjective algebra homomorphism $\psi: \, \mathcal{B}(X) \rightarrow \mathcal{B}(Y)$ is automatically injective. 
\end{definition}

The purpose of this paper is to initiate the study of this property. The paper is structured as follows.

In the second part of \textit{Section 1} we establish our notations and introduce the necessary background. We begin \textit{Section 2} by giving a list of examples of Banach spaces which lack the SHAI property, see Example \ref{nonex}. We continue by extending our list of examples of Banach spaces with the SHAI property. Since $\ell_2$ possesses this property, it is therefore natural to ask the same question for other classical sequence spaces. We obtain the following result: 

\begin{proposition}\label{ellpshi}
Suppose $X$ is one of the Banach spaces $c_0$ or $\ell_p$ for $1 \leq p \leq \infty$. Then $X$ has the SHAI property.
\end{proposition}

Another way of generalising the $\ell_2$-case is to ask whether all, not necessarily separable Hilbert spaces have the SHAI property. As we will demonstrate, the answer is affirmative:

\begin{theorem}\label{hilbshi}
A Hilbert space of arbitrary density character has the SHAI property.
\end{theorem}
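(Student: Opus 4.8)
The plan is to prove that an arbitrary Hilbert space $\mathcal{H}$ (of any density character) has the SHAI property by showing that if $\psi: \mathcal{B}(\mathcal{H}) \rightarrow \mathcal{B}(Y)$ is a surjective algebra homomorphism onto a non-zero $\mathcal{B}(Y)$, then $\Ker(\psi) = \lbrace 0 \rbrace$. The central structural fact I would invoke is the description of the (two-sided) closed ideal lattice of $\mathcal{B}(\mathcal{H})$: when $\mathcal{H}$ has density character $\kappa$, the closed ideals of $\mathcal{B}(\mathcal{H})$ form a well-ordered chain indexed by the infinite cardinals $\leq \kappa$ together with $\lbrace 0 \rbrace$, namely the ideals $\mathscr{I}_{\lambda}$ of operators whose range has density character strictly less than $\lambda$. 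Since $\Ker(\psi)$ is a closed two-sided ideal (closed because $\psi$ is assumed continuous, and the surjectivity onto a Banach algebra makes the standard closed-graph/open-mapping arguments available), it must be one of these ideals $\mathscr{I}_{\lambda}$. The strategy is then to rule out every possibility except $\lbrace 0 \rbrace$.

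The key step is to exploit surjectivity together with the fact that each proper quotient $\mathcal{B}(\mathcal{H})/\mathscr{I}_{\lambda}$ with $\mathscr{I}_\lambda \neq \lbrace 0 \rbrace$ fails to be isomorphic to any $\mathcal{B}(Y)$, and more usefully that these quotients have algebraic features incompatible with being a full operator algebra. First I would reduce to the separable case heuristically and then handle the general cardinal by a transfinite argument. Concretely, suppose for contradiction that $\Ker(\psi) = \mathscr{I}_{\lambda}$ for some infinite cardinal $\lambda \leq \kappa$; then $\psi$ factors as an isomorphism $\overline{\psi}: \mathcal{B}(\mathcal{H})/\mathscr{I}_{\lambda} \rightarrow \mathcal{B}(Y)$. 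I would derive a contradiction by locating, inside $\mathcal{B}(\mathcal{H})$, an orthogonal family of $\kappa$ mutually equivalent nonzero projections together with a partial-isometry structure (a copy of the Cuntz-type or matrix-unit relations) that survives the quotient by $\mathscr{I}_{\lambda}$, and then observing that the image in $\mathcal{B}(Y)$ of such a system forces $Y$ to carry an incompatible decomposition — in particular the image of the identity, which is the identity of $\mathcal{B}(Y)$, would be expressible as a sum (or limit of sums) of infinitely many orthogonal-like idempotents in a way that a unital algebra $\mathcal{B}(Y)$ cannot support unless the corresponding ideal was already trivial.

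More precisely, I would use the following idempotent-lifting idea. In $\mathcal{B}(\mathcal{H})/\mathscr{I}_\lambda$, the class of the identity is a nonzero idempotent; since $\psi$ is surjective, $\psi(\id_{\mathcal{H}})$ is an idempotent in $\mathcal{B}(Y)$ commuting with everything in the image, hence a central idempotent, and by surjectivity it must equal $\id_Y$. Now the rank-one projections in $\mathcal{B}(\mathcal{H})$ generate the minimal nonzero ideal (the finite-rank, or compact, operators); if $\lambda$ were strictly bigger than $\aleph_0$ these would lie in $\Ker(\psi)$, so $\psi$ would annihilate the compacts, and then $\psi(\id_\mathcal{H})$ would have to decompose through the Calkin-type quotient. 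The contradiction I would extract is that $\mathcal{B}(Y)$, being a unital algebra with $\id_Y = \psi(\id_\mathcal{H})$, cannot receive a nonzero image of a projection $p$ with $\psi(p) = 0$ while simultaneously admitting the surjection, because one can write $\id_\mathcal{H}$ as a strong-limit of finite sums of such projections and track how the algebraic identities (e.g.\ a Halmos-type $2\times 2$ matrix trick exhibiting any $T \in \mathscr{I}_\lambda$ as a sum of a few commutators or products of elements each of which maps into a proper corner) force $\psi|_{\mathscr{I}_\lambda}$ to be the zero map to be inconsistent with surjectivity onto all of $\mathcal{B}(Y)$.

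The hard part, and the step I expect to be the main obstacle, is the transfinite bookkeeping for the non-separable case: the ideal lattice of $\mathcal{B}(\mathcal{H})$ is a chain of length the number of infinite cardinals up to $\kappa$, and ruling out each intermediate ideal $\mathscr{I}_\lambda$ as a possible kernel requires a uniform argument that does not break when $\lambda$ is a limit cardinal (where $\mathscr{I}_\lambda$ is the closure of the union of the strictly smaller ideals). I anticipate that the cleanest route is to establish, once and for all, that for each such $\lambda$ the quotient $\mathcal{B}(\mathcal{H})/\mathscr{I}_\lambda$ is \emph{not} isomorphic to $\mathcal{B}(Y)$ for any Banach space $Y$ — for instance by showing these quotients are never idempotent-generated in the requisite way or contain no minimal nonzero idempotent, a property that every $\mathcal{B}(Y)$ enjoys through its rank-one projections. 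Granting such a rigidity statement for the ideal quotients, the theorem follows immediately since $\Ker(\psi)$ is forced to be the unique ideal $\lbrace 0 \rbrace$ whose quotient $\mathcal{B}(\mathcal{H})$ genuinely is a full operator algebra.
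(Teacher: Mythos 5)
Your closing reduction --- rule out every non-zero closed two-sided ideal $\mathcal{J}$ as a possible kernel by showing that $\mathcal{B}(\mathcal{H})/\mathcal{J}$ contains no minimal non-zero idempotent, whereas $\mathcal{B}(Y)$ always does (any rank-one idempotent $y \otimes \eta$ with $\langle y, \eta \rangle = 1$) --- is exactly the right endgame, and it is the strategy the paper follows. The problem is that you leave this rigidity statement unproved (``Granting such a rigidity statement\dots''), and it is the entire mathematical content of the theorem. The paper establishes it in three steps, none of which appear in your proposal: (i) a projection-lifting lemma valid for an \emph{arbitrary} closed two-sided ideal $\mathcal{J} \trianglelefteq \mathcal{B}(H)$: every projection $p \in \mathcal{B}(H)/\mathcal{J}$ lifts to a projection in $\mathcal{B}(H)$. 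This is where the real work happens: one lifts $p$ to a self-adjoint $A$, applies the spectral theorem to write $A = U^{-1} \circ M_f \circ U$, notes that $M_{f-f^2}$ maps into $\mathcal{J}$ (conjugated by $U$), and then produces an explicit bounded function $g$ with $g(f-f^2) = \mathbf{1}_{[f \geq 1/2]} - f$, so that the spectral projection $U^{-1} \circ M_{\mathbf{1}_{[f \geq 1/2]}} \circ U$ is a genuine lift of $p$. (ii) A splitting argument: any projection $Q \notin \mathcal{J}$ has infinite-dimensional range (since $\mathcal{J} \supseteq \mathcal{A}(H)$), hence splits as $Q_1 + Q_2$ with $Q_1, Q_2$ mutually orthogonal, equivalent to $Q$, so neither lies in $\mathcal{J}$ and both have image strictly below $\pi(Q)$; this kills minimality of projections in the quotient. (iii) A $C^*$-algebra lemma: if a $C^*$-algebra has a minimal idempotent then it has a minimal projection --- needed because the competitor algebra $\mathcal{B}(Y)$ supplies idempotents, not self-adjoint elements, so one must pass from ``no minimal projections'' to ``no minimal idempotents'' in the quotient (which is a $C^*$-algebra, closed ideals of $C^*$-algebras being self-adjoint).

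Two further points. First, the alternative routes sketched in your middle paragraphs would not survive being made precise: in particular, any argument that writes $\id_{\mathcal{H}}$ as a strong-operator-topology limit of finite sums of projections and then ``applies $\psi$'' is invalid, because a surjective homomorphism onto $\mathcal{B}(Y)$ is only norm-continuous (by Johnson's theorem), not SOT-continuous, so such limits do not push forward. Second, the Gramsch--Luft classification of the closed ideals of $\mathcal{B}(\mathcal{H})$ as a well-ordered chain $\mathscr{I}_{\lambda}$ is a correct but unnecessary import: the paper's argument is uniform in $\mathcal{J}$ and never needs to know what the non-zero closed ideals actually are, so the ``transfinite bookkeeping'' at limit cardinals that you flag as the main obstacle simply never arises. (A minor remark in the same spirit: continuity of $\psi$ is not an assumption in the SHAI property; it comes for free from Johnson's theorem because $\mathcal{B}(Y)$ is semisimple, and this is what makes $\Ker(\psi)$ closed.)
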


We shall also provide more ``exotic'' examples of Banach spaces with the SHAI property, including Schlumprecht's arbitrarily distortable Banach space $\mathbf{S}$, constructed in \cite{schlumprecht1}:

\begin{theorem}\label{arbdistortshi}
Let $X$ be a complementably minimal Banach space such that it has a complemented subspace isomorphic to $X \oplus X$. Then $X$ has the SHAI property. In particular, Schlumprecht's arbitrarily distortable Banach space $\mathbf{S}$ has the SHAI property.
\end{theorem}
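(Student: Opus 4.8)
\emph{Setup and reduction.} The plan is to argue by contradiction. Suppose $Y \neq \lbrace 0 \rbrace$ and $\psi: \mathcal{B}(X) \rightarrow \mathcal{B}(Y)$ is a surjective algebra homomorphism with $\Ker \psi \neq \lbrace 0 \rbrace$. Since the finite-rank operators $\mathcal{F}(X)$ form the minimal non-zero two-sided ideal of $\mathcal{B}(X)$, every non-zero two-sided ideal contains them, so $\mathcal{F}(X) \subseteq \Ker \psi$; moreover $\psi(\id_X) = \id_Y \neq 0$, so $\Ker \psi$ is a proper ideal. The first genuine use of the hypothesis is to show that $\Ker \psi$ lies inside the strictly singular operators $\mathcal{S}(X)$: if $T$ is \emph{not} strictly singular, then $T$ is bounded below on some infinite-dimensional subspace $V$, and complementable minimality supplies a subspace $W \subseteq V$ with $W \cong X$ and $W$ complemented in $X$. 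Since $TW \cong X$ is again infinite-dimensional, a second application of complementable minimality (pulling a complemented copy of $X$ inside $TW$ back through the isomorphism $T|_W$) shows that $\id_X$ factors through $T$; hence the two-sided ideal generated by $T$ is all of $\mathcal{B}(X)$. Consequently every proper two-sided ideal, in particular $\Ker \psi$, is contained in $\mathcal{S}(X)$.

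\emph{Lifting a rank-one idempotent.} As $Y \neq \lbrace 0 \rbrace$, fix a rank-one idempotent $q \in \mathcal{B}(Y)$ and, using surjectivity, choose $u \in \mathcal{B}(X)$ with $\psi(u) = q$. Then $u^{2} - u \in \Ker \psi \subseteq \mathcal{S}(X)$, so $u^{2} - u$ is a Riesz operator; its spectrum clusters only at $0$, which forces $\sigma(u)$ to meet the line $\lbrace \operatorname{Re}\lambda = 1/2 \rbrace$ in at most finitely many points. Choosing a contour $\Gamma$ separating the part of $\sigma(u)$ near $1$ from the part near $0$, the Riesz projection $p = \tfrac{1}{2\pi i}\oint_{\Gamma}(\lambda - u)^{-1}\,d\lambda$ is a genuine idempotent, and (by continuity of $\psi$, so that $\psi$ commutes with the functional calculus) $\psi(p) = \tfrac{1}{2\pi i}\oint_{\Gamma}(\lambda - q)^{-1}\,d\lambda = q$. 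Since $\mathcal{F}(X) \subseteq \Ker \psi$ while $\psi(p) = q \neq 0$, the operator $p$ cannot have finite rank, so $pX$ is infinite-dimensional.

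\emph{Producing a factoring idempotent in the kernel.} Because $pX$ is infinite-dimensional, complementable minimality gives a complemented copy of $X$ inside $pX$, and the standing hypothesis that $X$ has a complemented subspace isomorphic to $X \oplus X$ lets us split that copy into two orthogonal complemented copies of $X$ lying inside $pX$. This yields idempotents $f_{1}, f_{2} \in \mathcal{B}(X)$ with $f_{1}f_{2} = f_{2}f_{1} = 0$, $f_{i} \leq p$, and $f_{i}X \cong X$. Applying $\psi$, the elements $\psi(f_{1}), \psi(f_{2})$ are orthogonal idempotents lying in $q\mathcal{B}(Y)q = \mathbb{C}q$, whose only idempotents are $0$ and $q$; orthogonality rules out $\psi(f_{1}) = \psi(f_{2}) = q$, so $\psi(f_{j}) = 0$ for some $j$. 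But $f_{j}X \cong X$ is complemented, so $\id_X$ factors through $f_{j}$ and the ideal generated by $f_{j}$ is $\mathcal{B}(X)$; as $f_{j} \in \Ker \psi$ this gives $\id_X \in \Ker \psi$, whence $\id_Y = \psi(\id_X) = 0$, contradicting $Y \neq \lbrace 0 \rbrace$. Therefore $\Ker \psi = \lbrace 0 \rbrace$, so $X$ has the SHAI property. Schlumprecht's space $\mathbf{S}$ is complementably minimal and isomorphic to its square, hence trivially contains a complemented copy of $\mathbf{S}\oplus\mathbf{S}$, and the theorem applies.

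\emph{Main obstacle.} The conceptual core---splitting an infinite-rank lift into two copies of $X$ and using rank-one-ness of $q$ to push one copy into the kernel---is clean. I expect the real work to be the \emph{exact} idempotent lift $\psi(p) = q$. This rests on two supporting points that must be verified carefully: first, that $\Ker\psi \subseteq \mathcal{S}(X)$, so that $u^{2}-u$ is Riesz and its spectrum permits a separating contour (here complementable minimality is exactly what upgrades ``$T$ fixes a copy of $X$'' to ``$\id_X$ factors through $T$''); and second, that $\psi$ is continuous, so that the holomorphic functional calculus is preserved under $\psi$. Since the definition of SHAI does not presuppose continuity, establishing automatic continuity of such surjections (or lifting the idempotent without it) is where the bulk of the technical effort will lie.
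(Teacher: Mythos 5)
Your argument is correct, but it takes a genuinely different route from the paper's. The paper disposes of the theorem in a few lines: it quotes Whitley's theorem \cite[Theorem~6.2]{whitley} to conclude that $\mathcal{S}(X)$ is the largest proper two-sided ideal of $\mathcal{B}(X)$, so that $\mathcal{E}(X)=\mathcal{S}(X)$ is maximal, and then applies Lemma \ref{squaremax}: a non-injective surjection $\psi$ onto the semisimple algebra $\mathcal{B}(Y)$ must kill $\mathcal{E}(X)$ (Lemma \ref{noninjbigkernel}, via Kleinecke's theorem on the radical of $\mathcal{B}(X)/\mathcal{A}(X)$), hence $\Ker(\psi)=\mathcal{E}(X)$ by maximality, making $\mathcal{B}(Y)\simeq\mathcal{B}(X)/\mathcal{E}(X)$ simple --- impossible for infinite-dimensional $Y$ --- while finite-dimensional $Y$ is excluded separately by Lemma \ref{suffshi}. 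You instead reprove Whitley's theorem from scratch (your factorization of $\id_X$ through a non-strictly-singular operator is precisely his argument) and replace the radical/maximality argument by a concrete idempotent lift: $u^2-u$ is Riesz because it is strictly singular, the Riesz projection $p$ satisfies $\psi(p)=q$, and the $X\oplus X$ hypothesis splits $p$ into orthogonal idempotents $f_1,f_2$ whose images in the one-dimensional corner $q\mathcal{B}(Y)q=\mathbb{C}q$ cannot both be nonzero, forcing some $f_j\in\Ker(\psi)$ through which $\id_X$ factors. This is closer in spirit to the paper's Hilbert-space argument (Lemma \ref{prelift1}, Corollary \ref{lift1}, Proposition \ref{hilbnominproj}) than to its proof of the present theorem. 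What your route buys is uniformity: finite- and infinite-dimensional $Y$ are handled at once, with no appeal to the ring-theoretic infiniteness results behind Lemma \ref{suffshi}. What the paper's route buys is brevity, and it avoids spectral theory entirely.

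Two points in your write-up need repair, neither fatal. First, your ``main obstacle'' is not one: automatic continuity of $\psi$ is exactly Johnson's theorem (every surjective homomorphism onto a semisimple Banach algebra is continuous), which the paper states at the start of Section 2 and uses throughout; since $\mathcal{B}(Y)$ is semisimple, your interchange of $\psi$ with the contour integral is legitimate, and no further technical effort is required. Second, ``$\sigma(u)$ meets the line $\{\operatorname{Re}\lambda=1/2\}$ in finitely many points'' does not by itself yield a separating contour. The statement you actually need, which follows from the spectral mapping theorem applied to $\lambda\mapsto\lambda^2-\lambda$ together with the Riesz property of $u^2-u$, is that $\sigma(u)$ is countable with accumulation points only in $\{0,1\}$; consequently some vertical line $\operatorname{Re}\lambda=c$ with $c$ near $1/2$ misses $\sigma(u)$ entirely, and it can be closed up to a contour $\Gamma$ enclosing $1$ (which lies in $\sigma(q)\subseteq\sigma(u)$) but not $0$, as your computation of $\frac{1}{2\pi i}\oint_{\Gamma}(\lambda-q)^{-1}\,d\lambda=q$ requires.
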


When studying the SHAI property of a Banach space $X$, understanding the complemented subspaces of $X$ and the lattice of closed two-sided ideals of $\mathcal{B}(X)$ appears to be immensely helpful. For the Banach space $X = \textstyle( \bigoplus_{n \in \mathbb{N}} \ell_2^{n} )_Y$, where $Y$ is $c_0$ or $\ell_1$, the complemented subspace structure was studied by Bourgain, Casazza, Lindenstrauss, and Tzafriri in \cite{bclt} and the ideal lattice of $\mathcal{B}(X)$ by Laustsen, Loy, and Read in \cite{llr} and later by Laustsen, Schlumprecht, and Zs\'ak in \cite{ltzs}. Their results allow us to show the following:

\begin{theorem}\label{sumfindimshi}
Let $X:= \textstyle( \bigoplus_{n \in \mathbb{N}} \ell_2^{n} )_Y$, where $Y$ is $c_0$ or $\ell_1$. Then $X$ has the SHAI property.
\end{theorem}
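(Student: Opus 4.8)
The plan is to prove the contrapositive at the level of quotients: I will show that $\mathcal{B}(X)/\mathcal{I}$ is not isomorphic, as an algebra, to $\mathcal{B}(Z)$ for any nonzero closed two-sided ideal $\mathcal{I}$ of $\mathcal{B}(X)$ and any nonzero Banach space $Z$. Since the kernel of a non-injective homomorphism is a nonzero two-sided ideal, the SHAI property follows once this is established (together with the reduction to closed ideals discussed below). The two external inputs I would quote at the outset are: from Bourgain--Casazza--Lindenstrauss--Tzafriri \cite{bclt}, the classification of the complemented subspaces of $X = \textstyle(\bigoplus_{n} \ell_2^{n})_Y$, from which I only need that \emph{every infinite-dimensional complemented subspace $W$ of $X$ satisfies $W \cong W \oplus W$}; and from Laustsen--Loy--Read \cite{llr} and Laustsen--Schlumprecht--Zs\'ak \cite{ltzs}, the explicit (finite) lattice of closed two-sided ideals of $\mathcal{B}(X)$, whose smallest nonzero member is $\mathcal{K}(X)$.

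The engine of the proof is a character obstruction. First I would recall the standard fact that the finite-rank operators $\mathcal{F}(X)$ form the unique minimal two-sided ideal of $\mathcal{B}(X)$, so a non-injective $\psi$ satisfies $\mathcal{F}(X) \subseteq \mathcal{I} := \Ker(\psi)$. On the other side, $\mathcal{B}(Z)$ has nonzero socle --- its rank-one projections are minimal idempotents --- so an algebra isomorphism $\mathcal{B}(X)/\mathcal{I} \cong \mathcal{B}(Z)$ produces a minimal idempotent $\bar{e}$ in $\mathcal{B}(X)/\mathcal{I}$. Assuming $\mathcal{I}$ is closed, I would lift $\bar e$ to an idempotent $e \in \mathcal{B}(X)$; identifying $e\mathcal{B}(X)e$ with $\mathcal{B}(eX)$, the minimality relation $\bar e (\mathcal{B}(X)/\mathcal{I}) \bar e = \mathbb{C}\bar e$ becomes the statement that $\mathcal{B}(eX)$ possesses a two-sided ideal of codimension one, i.e. a character $\chi$. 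Because $\bar e \neq 0$ gives $e \notin \mathcal{I} \supseteq \mathcal{F}(X)$, the complemented subspace $eX$ is infinite-dimensional, so $eX \cong eX \oplus eX$ by the input from \cite{bclt}. But $\mathcal{B}(W)$ admits no character when $W \cong W \oplus W$: one can choose $S, T \in \mathcal{B}(W)$ with $TS = I_W$ and $ST$ a projection whose range and kernel are each isomorphic to $W$, whence $\chi(ST) = \chi(TS) = 1$ and, symmetrically, $\chi(I_W - ST) = 1$, giving the contradiction $1 = \chi(I_W) = 2$. Thus $\mathcal{B}(X)/\mathcal{I}$ has no minimal idempotent and cannot be $\mathcal{B}(Z)$.

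The step I expect to be the real obstacle is the reduction allowing me to assume $\mathcal{I} = \Ker(\psi)$ is closed: the idempotent lifting above fails for non-closed ideals, and discontinuous homomorphisms are precisely what produce the non-SHAI examples, so this cannot be finessed. Here I would bring in the full force of \cite{llr, ltzs}: combining semisimplicity of the target $\mathcal{B}(Z)$ with the explicit finite list of closed ideals, I would aim to locate $\overline{\mathcal{I}}$ in the lattice and force $\mathcal{I}$ to coincide with a closed ideal --- equivalently, to establish automatic continuity of $\psi$ on $\mathcal{B}(X)$. Once closedness is secured, the character obstruction applies uniformly to the finitely many candidate ideals --- in particular it already eliminates the Calkin-type quotient $\mathcal{B}(X)/\mathcal{K}(X)$ --- and since the complemented-subspace input holds equally for $Y = c_0$ and $Y = \ell_1$, both cases of the theorem are dispatched together.
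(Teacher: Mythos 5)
Your overall strategy---pass to the lattice of closed two-sided ideals of $\mathcal{B}(X)$ from \cite{llr,ltzs} and then obstruct each nonzero proper closed ideal $\mathcal{I}$ by showing $\mathcal{B}(X)/\mathcal{I}$ cannot contain a minimal idempotent---is essentially the paper's strategy, and your character obstruction is a correct variant of the paper's ``no minimal idempotents'' argument (Proposition \ref{hilbnominproj}(1)): the computation showing that $\mathcal{B}(W)$ admits no character when $W \simeq W \oplus W$ is sound, and transporting the relation $e\mathcal{B}(Z)e = \mathbb{C}e$ through the isomorphism and the lifted idempotent is fine. Of the two steps you flag or elide, the first is benign: the ``real obstacle'' of closedness of $\Ker(\psi)$ is exactly Johnson's automatic continuity theorem (every surjective algebra homomorphism onto a semisimple Banach algebra, such as $\mathcal{B}(Z)$, is automatically continuous), which the paper states at the start of Section 2 and uses silently. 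There is nothing to ``aim to establish'' from the ideal lattice---you should simply cite this theorem, after which $\Ker(\psi)$ is a closed two-sided ideal and Theorem \ref{idcharsumfin} applies.

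The genuine gap is your claim that the lifting-plus-character obstruction ``applies uniformly to the finitely many candidate ideals.'' Lifting an idempotent of $\mathcal{B}(X)/\mathcal{I}$ to an idempotent of $\mathcal{B}(X)$ is a theorem only for $\mathcal{I} = \mathcal{K}(X)$ (Lemma \ref{liftcalkin}); it is false for closed ideals of Banach algebras in general---for instance, for the ideal $\lbrace f \in C[0,1] : f(0)=f(1)=0 \rbrace$ of $C[0,1]$ the quotient is $\mathbb{C}^2$, whose idempotent $(1,0)$ does not lift, since the only idempotents of $C[0,1]$ are $0$ and $1$---and no lifting result is available in the literature for the maximal ideal $\overline{\mathscr{G}}_Y(X)$. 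So your argument, as written, disposes of $\Ker(\psi) = \mathcal{K}(X)$ but not of $\Ker(\psi) = \overline{\mathscr{G}}_Y(X)$. The repair is what the paper does for that case: since $\overline{\mathscr{G}}_Y(X)$ is a maximal closed ideal (and in a unital Banach algebra the closure of a proper ideal is proper, so it is maximal among all ideals), the quotient $\mathcal{B}(X)/\overline{\mathscr{G}}_Y(X)$ is simple, whereas $\mathcal{B}(Z)$ contains the nonzero proper closed ideal $\mathcal{K}(Z)$ when $Z$ is infinite-dimensional; the remaining possibility that $Z$ is finite-dimensional is excluded by Lemma \ref{suffshi}, i.e.\ by the fact from \cite{ringfinofopalgs} that $\mathcal{B}(X)$ has no proper finite-codimensional ideals when $X$ has a complemented subspace isomorphic to $X \oplus X$. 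With these two substitutions your proof goes through and coincides in substance with the paper's.
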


Finally, in \textit{Section 2} we establish a permanence property:

\begin{proposition}\label{directsumshi}
Let $E$ be a Banach space and let $F,G$ be closed subspaces of $E$ with $E = F \oplus G$. If both $F$ and $G$ have the SHAI property then $E$ has the SHAI property.
\end{proposition}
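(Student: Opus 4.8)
The plan is to use the idempotents witnessing $E = F \oplus G$ to transport the problem to the two summands. Write $P_F, P_G \in \mathcal{B}(E)$ for the complementary projections onto $F$ and $G$, so that $P_F + P_G = \id$, $P_F^2 = P_F$, $P_G^2 = P_G$ and $P_F P_G = P_G P_F = 0$; if $F = 0$ or $G = 0$ the statement is immediate, so assume both are non-zero. Let $\psi \colon \mathcal{B}(E) \to \mathcal{B}(Y)$ be a surjective algebra homomorphism with $Y \neq 0$, and put $J := \Ker \psi$; I must show $J = \{0\}$. Since $\psi$ is surjective it is unital, so $Q_F := \psi(P_F)$ and $Q_G := \psi(P_G)$ are complementary idempotents in $\mathcal{B}(Y)$ and induce a decomposition $Y = Y_F \oplus Y_G$ with $Y_F := \Ran Q_F$ and $Y_G := \Ran Q_G$. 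Identifying $\mathcal{B}(F)$ with the corner $P_F \mathcal{B}(E) P_F$ via the (non-unital) embedding $\iota_F$ that sends $S \in \mathcal{B}(F)$ to the operator acting as $S$ on $F$ and as $0$ on $G$ (so $\iota_F(\id) = P_F$), and likewise $\mathcal{B}(Y_F)$ with $Q_F \mathcal{B}(Y) Q_F$, the composition $\psi_F := \psi \circ \iota_F$ is a surjective algebra homomorphism $\mathcal{B}(F) \to \mathcal{B}(Y_F)$ whose kernel corresponds to $P_F J P_F = J \cap P_F \mathcal{B}(E) P_F$. Define $\psi_G$ analogously.

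The main obstacle, and the heart of the argument, is that the SHAI property of $F$ and of $G$ can only be invoked when the respective targets $Y_F, Y_G$ are non-zero, so the key claim is that both of them indeed are. Suppose, for contradiction, that $Y_F = 0$. Then $Q_F = 0$, whence $Q_G = \id$ and $Y_G = Y \neq 0$, so $\psi_G \colon \mathcal{B}(G) \to \mathcal{B}(Y)$ is surjective; as $G$ has the SHAI property, $\psi_G$ is injective, which translates to $P_G J P_G = \{0\}$. On the other hand $Q_F = 0$ means precisely that $P_F \in J$. Using that $F$ and $G$ are non-zero, I would choose rank-one operators $D \colon G \to F$ and $C \colon F \to G$ with $C D \neq 0$ (concretely $Dx = g^*(x) f$ and $Cy = f^*(y) g$ for suitable $f \in F$, $g \in G$, $f^* \in F^*$, $g^* \in G^*$), regarded as elements of $\mathcal{B}(E)$ supported on the appropriate corners. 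Since $J$ is a two-sided ideal and $P_F \in J$, the operator $C P_F D$ lies in $J$; but $C P_F D$ equals the non-zero operator $CD$, which is supported on $G$ and hence is a non-zero element of $P_G J P_G$, contradicting $P_G J P_G = \{0\}$. Thus $Y_F \neq 0$, and by the symmetric argument $Y_G \neq 0$.

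With both $Y_F$ and $Y_G$ non-zero, the SHAI property of $F$ and of $G$, applied to the surjections $\psi_F$ and $\psi_G$, yields $P_F J P_F = \{0\}$ and $P_G J P_G = \{0\}$. It remains to kill the off-diagonal corners. For $u = P_F T P_G$ with $T \in J$ one has, for every $W \in \mathcal{B}(E)$, that $u \,(P_G W P_F) = P_F (T P_G W) P_F \in P_F J P_F = \{0\}$; viewing $u$ as an operator $G \to F$ and noting that $P_G W P_F$ realises an arbitrary operator $F \to G$, a one-dimensional test (taking $P_G W P_F$ of rank one) forces $u = 0$, so $P_F J P_G = \{0\}$, and symmetrically $P_G J P_F = \{0\}$. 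Finally, every $T \in J$ decomposes as $T = P_F T P_F + P_F T P_G + P_G T P_F + P_G T P_G$, each summand lying in the corresponding corner of $J$, all of which vanish; hence $T = 0$ and $J = \{0\}$, i.e.\ $\psi$ is injective. I expect the delicate points to be the bookkeeping of the corner identifications and, above all, the verification that neither $Y_F$ nor $Y_G$ can be zero, since that is exactly where the hypotheses on \emph{both} summands are genuinely used.
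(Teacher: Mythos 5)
Your proof is correct, and its skeleton coincides with the paper's: the complementary idempotents, the induced corner surjections onto $\mathcal{B}(\Ran \psi(P_F))$ and $\mathcal{B}(\Ran \psi(P_G))$, the application of the SHAI hypothesis to both, and the case distinction according to whether these corner targets vanish. The genuine differences lie in how each case is finished, and in both spots your route is more self-contained. In the degenerate case the paper does not show that $\Ran \psi(P_F) = \lbrace 0 \rbrace$ is impossible; it proves injectivity of $\psi$ directly there, using that a non-injective $\psi$ would satisfy $\mathcal{F}(E) \subseteq \Ker(\psi)$ --- i.e.\ the standard fact that every non-zero two-sided ideal of $\mathcal{B}(E)$ contains the finite-rank operators --- and then testing a rank-one element against the non-vanishing corner. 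You instead rule the case out by manufacturing the non-zero kernel element $C P_F D$ of the $G$-corner directly from $P_F \in \Ker(\psi)$, needing nothing beyond the ideal property. In the main case the paper argues that every $A \in \Ker(\psi)$ is off-diagonal, hence $A^2$ is simultaneously diagonal and off-diagonal, so $A^2 = 0$, and then invokes Remark \ref{idealsquarezero}, which again rests on $\mathcal{A}(E)$ being the smallest non-trivial closed two-sided ideal; you kill the off-diagonal corners outright by multiplying with hand-built rank-one operators and using the vanishing of the diagonal corners. The two endgames prove the same thing: yours trades the paper's brevity for an argument that uses only the ideal property of the kernel and explicitly constructed rank-one operators, while the paper leans on standard structural facts about the ideal lattice of $\mathcal{B}(E)$.
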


We remark in passing that the stability of the SHAI property under finite sums is of interest to us since $\mathcal{B}(F \oplus G)$ can have a very complicated lattice of closed two-sided ideals even if $\mathcal{B}(F)$ and $\mathcal{B}(G)$ themselves have the simplest possible ideal structure, we refer the reader to \cite{fschzs} and \cite{schzs}. We do not know however if $L_p[0,1]$ possesses the SHAI property for $p \in [1, \infty) \backslash \lbrace 2 \rbrace$.

\textit{Section 3} is devoted entirely to construct Banach spaces which fail the SHAI property in a rather non-trivial manner; for every separable, reflexive Banach space $X$ we find a Banach space $Y_X$ and a surjective but not injective algebra homomorphism $\Theta: \, \mathcal{B}(Y_X) \rightarrow \mathcal{B}(X)$. More precisely, we prove the following:

\begin{theorem}\label{ourmain}
Let $X$ be a non-zero, separable, reflexive Banach space. For every $S \in \mathcal{B}(Y_X)$ there exists a unique $\Theta(S) \in \mathcal{B}(X)$ and there exists a club subset $D \subseteq [0, \omega_1)$ such that for all $\alpha \in D$ and all $\psi \in X^*$:
\begin{align}
S^*(\delta_{\alpha} \otimes \psi) = \delta_{\alpha} \otimes \Theta(S)^* \psi.
\end{align} 
Moreover, the map $\Theta: \, \mathcal{B}(Y_X) \rightarrow \mathcal{B}(X); \, S \mapsto \Theta(S)$ is a non-injective algebra homomorphism of norm one; and there exists an algebra homomorphism $\Lambda: \, \mathcal{B}(X) \rightarrow \mathcal{B}(Y_X)$ of norm one with $\Theta \circ \Lambda = \id_{\mathcal{B}(X)}$. In particular $\Theta$ is surjective.
\end{theorem}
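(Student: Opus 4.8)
The plan is to derive everything from one structural principle about adjoints of operators on $Y_X$: once the ordinal variable $\alpha$ is confined to a suitable club, $S^*$ acts diagonally on the functionals $\delta_\alpha \otimes \psi$. I would prove first that for each $S \in \mathcal{B}(Y_X)$ there is a club $D_S \subseteq [0,\omega_1)$ and a bounded operator $\phi_S \in \mathcal{B}(X^*)$, with $\|\phi_S\| \le \|S\|$, satisfying
\begin{equation*}
S^*(\delta_\alpha \otimes \psi) = \delta_\alpha \otimes \phi_S(\psi) \qquad (\alpha \in D_S,\ \psi \in X^*).
\end{equation*}
Reflexivity of $X$ is what lets me convert $\phi_S$ into the desired operator on $X$: for reflexive $X$ the weak$^*$ topology on $X^*$ agrees with the weak topology, so every bounded operator on $X^*$ is weak$^*$--weak$^*$ continuous and hence an adjoint; thus $\phi_S = \Theta(S)^*$ for a unique $\Theta(S) \in \mathcal{B}(X)$ with $\|\Theta(S)\| = \|\phi_S\| \le \|S\|$. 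This simultaneously yields the displayed identity of the theorem and the bound $\|\Theta\| \le 1$; uniqueness of $\Theta(S)$ follows because any two clubs meet (indeed their intersection is again a club, hence uncountable), so the identity on a common ordinal pins down $\Theta(S)^*$ on all of $X^*$.

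Establishing the stabilization principle is the crux and, I expect, the main obstacle. Here I would lean on the concrete description of $Y_X^*$ coming from the Semadeni-type construction: each functional has countable support in the ordinal coordinate. Fixing $\psi$, I would apply a pressing-down (Fodor) argument to the map sending $\alpha$ to the support of $S^*(\delta_\alpha \otimes \psi)$, producing a club on which this support is confined below $\alpha$; the continuity built into $Y_X$ then forces $S^*(\delta_\alpha \otimes \psi)$ onto the diagonal, that is, into $\delta_\alpha \otimes X^*$, with a coefficient that a further closing-off argument shows to be eventually independent of $\alpha$. To obtain a single club valid for every $\psi$ at once I would use separability: since $X$ is separable and reflexive, $X^*$ is separable, so I can run the argument on a countable dense set of functionals, intersect the resulting countably many clubs, and extend the identity to all $\psi$ by density together with the uniform bound $\|S^*\| = \|S\|$. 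This is exactly the point where both hypotheses on $X$ are consumed --- separability for the combinatorics, reflexivity for the passage back to $\mathcal{B}(X)$.

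With the principle in hand, the algebraic properties of $\Theta$ are routine consequences of uniqueness and the behaviour of adjoints under composition. Given $S, S' \in \mathcal{B}(Y_X)$, their clubs intersect in a club $D_S \cap D_{S'}$, on which linearity is immediate and, using $(SS')^* = (S')^* S^*$, one computes
\begin{equation*}
(SS')^*(\delta_\alpha \otimes \psi) = (S')^*\bigl(\delta_\alpha \otimes \Theta(S)^*\psi\bigr) = \delta_\alpha \otimes \bigl(\Theta(S)\Theta(S')\bigr)^*\psi,
\end{equation*}
so uniqueness gives $\Theta(SS') = \Theta(S)\Theta(S')$. Hence $\Theta$ is an algebra homomorphism of norm at most one.

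It remains to construct $\Lambda$ and to see that $\Theta$ is surjective but not injective. For $\Lambda$ I would take the pointwise post-composition map $\Lambda(T) \colon f \mapsto T \circ f$, which is a norm-one algebra homomorphism $\mathcal{B}(X) \to \mathcal{B}(Y_X)$; since $\Lambda(T)^*(\delta_\alpha \otimes \psi) = \delta_\alpha \otimes T^*\psi$ holds for \emph{every} $\alpha$, the defining identity and uniqueness give $\Theta(\Lambda(T)) = T$, i.e.\ $\Theta \circ \Lambda = \id_{\mathcal{B}(X)}$. In particular $\Theta$ is surjective, and $1 = \|\id_{\mathcal{B}(X)}\| = \|\Theta \circ \Lambda\| \le \|\Theta\|\,\|\Lambda\| \le \|\Theta\|$ upgrades the earlier bound to $\|\Theta\| = 1$. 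For non-injectivity I would use the concrete structure of $Y_X$ to exhibit a nonzero operator whose range consists of functions supported on a fixed countable subset of $[0,\omega_1)$; evaluation $\delta_\alpha$ annihilates such a function for every $\alpha$ outside that countable set, which is co-countable (hence contains a club), so $S^*(\delta_\alpha \otimes \psi) = 0$ there and $\Theta(S) = 0$ although $S \neq 0$. This shows $\Ker(\Theta) \neq \{0\}$ and completes the argument.
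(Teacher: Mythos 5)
Your overall architecture coincides with the paper's: a club-stabilization principle for $S^*(\delta_\alpha\otimes\psi)$, reflexivity of $X$ to descend from $\mathcal{B}(X^*)$ to $\mathcal{B}(X)$, uniqueness and multiplicativity via intersections of clubs, and a right inverse given by post-composition (your $\Lambda(T)\colon F\mapsto T\circ F$ is exactly the paper's $\Lambda(A)=(P\otimes_{\epsilon}A)\vert_{Y_X}$ written concretely). Your non-injectivity argument is correct and genuinely different: you exhibit explicit kernel elements (operators whose ranges consist of functions vanishing beyond a fixed countable ordinal), whereas the paper argues that injectivity would force $\mathcal{B}(Y_X)\simeq\mathcal{B}(X)$, hence $Y_X\simeq X$ by Eidelheit's theorem, contradicting non-separability of $Y_X$; your route is more explicit and equally valid. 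The genuine gap is at the crux, the stabilization principle itself. The paper does not prove it from scratch: it scalarizes, observing that for fixed $x\in X$, $\psi\in X^*$ the operator $S_x^{\psi}\colon f\mapsto\langle (S(f\otimes x))(\cdot),\psi\rangle$ lies in $\mathcal{B}(C_0[0,\omega_1))$, and then invokes the Kania--Koszmider--Laustsen theorem (Theorem~\ref{lkk}) as a black box to get a scalar $\varphi(S_x^{\psi})$ and a club; separability, bilinearity, reflexivity and Lemma~\ref{suff} then assemble $\Theta(S)$. You propose instead to prove the stabilization directly by club combinatorics, which amounts to re-proving a vector-valued strengthening of that cited theorem; naming Fodor, closing-off, continuity and separability is not the same as executing them, and this is where essentially all the mathematical content of the theorem sits.

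Moreover, the sketch as written breaks at a specific point. Fixing only $\psi$, the coefficients of $S^*(\delta_\alpha\otimes\psi)\in Y_X^*\cong\ell_1([0,\omega_1);X^*)$ are \emph{vectors} in $X^*$, not scalars. The mechanism that kills off-diagonal mass in the scalar case --- the coefficient at a fixed ordinal $\beta_0$, namely $\alpha\mapsto\langle \mathbf{1}_{\{\beta_0\}}\otimes x,\,S^*(\delta_\alpha\otimes\psi)\rangle$, is the value at $\alpha$ of a continuous function vanishing at $\omega_1$, hence dies eventually --- only controls the pairing with each \emph{fixed} $x$. In norm, the $X^*$-valued coefficients at $\beta_0$ can remain bounded away from zero while tending to zero weak$^*$ (an orthonormal sequence in $X=\ell_2$ shows the phenomenon), so the pressing-down contradiction does not close as you state it. The repair is to scalarize in $x$ as well: run the argument over a countable dense subset of $X$ and take countable suprema of the resulting thresholds, i.e.\ separability of $X$ must enter \emph{inside} the combinatorial step, not only in the final gluing over $\psi$ where your outline places it. (There is also a technical misassignment: Fodor's lemma addresses mass strictly \emph{below} the diagonal and yields stationary sets, which must be upgraded to clubs by a contradiction argument; mass \emph{above} the diagonal requires a closing-off argument combined with weak$^*$-continuity of $\alpha\mapsto S^*(\delta_\alpha\otimes\psi)$ at limit points.) Until this principle is actually proved --- or quoted from the literature, as the paper does --- the proposal is an outline rather than a proof.
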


All necessary terminology and notation will be explained in the subsequent sections. 

\subsection{Preliminaries}

Our notations and terminology are standard. The set of natural numbers not including zero will be denoted by $\mathbb{N}$, and $\mathbb{N}_0 := \mathbb{N} \cup \lbrace 0 \rbrace$. The fields of real and complex numbers are denoted by $\mathbb{R}$ and $\mathbb{C}$, respectively. 

\subsubsection{Banach spaces, their algebras of operators and ideals thereof, Banach algebras}
In what follows, all Banach spaces and Banach algebras are assumed to be over the complex scalar field $\mathbb{C}$. Most of our results extend however verbatim to Banach spaces over the real scalar field $\mathbb{R}$. Whenever an argument of ours holds only in the complex case, we emphasize the importance of the choice of the scalar field.

If $X$ is a Banach space then its dual space is $X^*$ and $\langle \cdot \, , \cdot \rangle$ is the duality bracket between $X$ and $X^*$. The symbol $I_X$ is the identity operator on $X$. The symbol $\mathcal{B}(X,Y)$ stands for the Banach space of bounded linear operators between the Banach spaces $X$ and $Y$, we let $\mathcal{B}(X):= \mathcal{B}(X,X)$. For $T \in \mathcal{B}(X,Y)$ its adjoint is $T^* \in \mathcal{B}(Y^*,X^*)$. If $W,Z$ are closed linear subspaces of $X$ and $Y$, respectively, then for a $T \in \mathcal{B}(X,Y)$ we denote the restriction of $T$ to $W$ by $T \vert_W$, clearly $T \vert_W \in \mathcal{B}(W,Y)$. If $\Ran(T) \subseteq Z$ then $T \vert^Z$ denotes $T$ considered as a bounded linear operator between $X$ and $Z$, that is, $T \vert^Z \in \mathcal{B}(X,Z)$.

The \textit{direct sum} of Banach spaces $X$ and $Y$ will be denoted by $X \oplus Y$. Two Banach spaces $X$ and $Y$ are said to be \textit{isomorphic} if there is a linear homeomorphism between $X$ and $Y$, it will be denoted by $X \simeq Y$. If $X \simeq X \oplus X$ we say that \textit{$X$ is isomorphic to its square}. Throughout this paper, whenever two Banach spaces are isometrically isomorphic we shall identify them when it does not cause any confusion. By an \textit{isomorphism of Banach algebras} $A$ and $B$ we understand that there is an algebra homomorphism between $A$ and $B$ which is also a homeomorphism. This will also be denoted by $A \simeq B$.

The symbols $\mathcal{A}(X)$, $\mathcal{K}(X)$, $\mathcal{S}(X)$, $\mathcal{E}(X)$, $\mathcal{W}(X)$ and $\mathcal{X}(X)$ stand for the closed two-sided ideals of operators which are \textit{approximable}, \textit{compact}, \textit{strictly singular}, \textit{inessential}, \textit{weakly compact} and have \textit{separable range}, respectively. We recall that $T \in \mathcal{B}(X)$ is an \textit{inessential operator} (see \cite[page~489]{pi}) if for every $S \in \mathcal{B}(X)$ it follows that $\dim(\Ker(I_X + ST)) < \infty$ and $\codim_X(\Ran(I_X + ST)) < \infty$; this is equivalent to saying that $I_X +ST$ is a Fredholm operator for every $S \in \mathcal{B}(X)$. It is well-known that $\mathcal{A}(X) \subseteq \mathcal{K}(X) \subseteq \mathcal{S}(X) \subseteq \mathcal{E}(X)$ and $\mathcal{K}(X) \subseteq \mathcal{W}(X) \cap \mathcal{X}(X)$ hold, see for example \cite{caradus}. 

A \textit{character} on a unital Banach algebra $A$ is a unit-preserving algebra homomorphism from $A$ to $\mathbb{C}$. Any such character is necessarily of norm at most $1$ and therefore continuous.

\subsubsection{Idempotents, projections}
Let $R$ be a ring. We say that $p \in R$ is an \textit{idempotent} if $p^2 =p$. If $p,q \in R$ are idempotents then we say that they are \textit{mutually orthogonal} and write $p \perp q$ if $pq=0=qp$. For $p,q \in R$ idempotents we write $p \sim q$ if there exist $a,b \in R$ such that $p=ab$ and $q = ba$, in this case we say that $p$ and $q$ are \textit{equivalent}. If $p,q \in R$ are idempotents, then we write $q \leq p$ whenever $pq=q$ and $qp=q$ hold. This is a partial order on the set of idempotents of $R$. We say that an idempotent $p \in R$ is \textit{minimal} if it is minimal in the set of non-zero idempotents of $R$ with respect to this partial order. We write $q < p$ if both $q \leq p$ and $q \neq p$ hold.

In a $C^*$-algebra $A$ an idempotent $p \in A$ is called a \textit{projection} if it is self-adjoint. A projection is minimal if it is minimal in the set of non-zero projections of $A$ with respect to the partial order $\leq$.

\subsubsection{Simple and semisimple algebras}
We say that a unital algebra $A$ is \textit{simple} if the only non-trivial two-sided ideal in $A$ is $A$. If $A$ is a unital algebra, the \textit{Jacobson radical} of $A$, denoted by $\jacrad(A)$, is the intersection of all maximal left ideals in $A$, and it is a two-sided ideal in $A$. If there are no proper left ideals in $A$ we put $\jacrad(A):= A$. A unital algebra is \textit{semisimple} if its Jacobson radical is trivial. For any Banach space $X$, the Banach algebra $\mathcal{B}(X)$ is well-known to be semisimple but it is not simple whenever $X$ is infinite-dimensional, since $\mathcal{A}(X)$ is a proper non-trivial closed two-sided ideal in $\mathcal{B}(X)$.
	
\section{When surjective algebra homomorphisms are automatically injective}

A classical deep result of B. E. Johnson asserts the following.
\begin{theorem}[Johnson]
If $A,B$ are Banach algebras such that $B$ is semisimple, then every surjective algebra homomorphism $\psi: \, A \rightarrow B$ is automatically continuous.
\end{theorem}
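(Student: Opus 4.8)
The plan is to run the classical separating-space argument and to trap the obstruction to continuity inside the Jacobson radical, which is trivial by hypothesis. For a linear map $\psi\colon A \to B$ between Banach spaces, recall the \emph{separating space}
\[
\mathfrak{S}(\psi) := \lbrace b \in B : \text{there is } (a_n) \subseteq A \text{ with } a_n \to 0 \text{ and } \psi(a_n) \to b \rbrace,
\]
a closed linear subspace of $B$. An application of the closed graph theorem shows that $\psi$ is continuous if and only if $\mathfrak{S}(\psi) = \lbrace 0 \rbrace$: the graph of a linear map between Banach spaces is closed exactly when $a_n \to 0$ and $\psi(a_n) \to b$ force $b = 0$. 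Thus the whole problem reduces to proving $\mathfrak{S}(\psi) = \lbrace 0 \rbrace$, and since $B$ is semisimple, so that $\jacrad(B) = \lbrace 0 \rbrace$, it suffices to establish the inclusion $\mathfrak{S}(\psi) \subseteq \jacrad(B)$.

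First I would record that $\mathfrak{S}(\psi)$ is a closed two-sided ideal of $B$; this is where surjectivity of $\psi$ enters. Given $b = \psi(c) \in B$ and $s = \lim_n \psi(a_n) \in \mathfrak{S}(\psi)$ with $a_n \to 0$, multiplicativity yields $\psi(a_n c) = \psi(a_n) b \to s b$ and $\psi(c a_n) = b \psi(a_n) \to b s$, while $a_n c \to 0$ and $c a_n \to 0$; hence $sb, bs \in \mathfrak{S}(\psi)$. Next I would pass to the primitive ideals of $B$. Writing $\jacrad(B) = \bigcap_P P$, the intersection taken over all primitive ideals $P$ of $B$, each such $P$ is the kernel of a strictly irreducible representation $\pi\colon B \to \mathcal{B}(E)$ on a Banach space $E$, realised on the quotient $E = B/L$ by a maximal modular left ideal and automatically contractive. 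It therefore suffices to show $\mathfrak{S}(\psi) \subseteq P = \Ker(\pi)$ for every such $\pi$.

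Fix one irreducible $\pi$ and consider the composite representation $\pi \circ \psi\colon A \to \mathcal{B}(E)$. Because $\psi$ is surjective, $\pi(\psi(A)) = \pi(B)$ acts on $E$ with no non-trivial invariant subspace, so $\pi \circ \psi$ is again strictly irreducible. Invoking the Johnson--Sinclair automatic continuity theorem for strictly irreducible representations of Banach algebras on Banach spaces, $\pi \circ \psi$ is continuous, i.e. $\mathfrak{S}(\pi \circ \psi) = \lbrace 0 \rbrace$. Finally, continuity of $\pi$ lets me transport the separating space forward: if $s = \lim_n \psi(a_n)$ with $a_n \to 0$, then $\pi(s) = \lim_n \pi(\psi(a_n)) \in \mathfrak{S}(\pi \circ \psi) = \lbrace 0 \rbrace$, so $\pi(\mathfrak{S}(\psi)) = \lbrace 0 \rbrace$, that is $\mathfrak{S}(\psi) \subseteq \Ker(\pi)$. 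Intersecting over all primitive ideals gives $\mathfrak{S}(\psi) \subseteq \jacrad(B) = \lbrace 0 \rbrace$, and continuity of $\psi$ follows.

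The genuinely hard part is the input I have quarantined into the Johnson--Sinclair theorem: that every strictly irreducible representation $\theta$ of a Banach algebra on a Banach space $E$ is automatically continuous. Its proof is a far more delicate separating-space argument carried out at the level of the orbit maps $a \mapsto \theta(a)\xi$ for $\xi \in E$; one invokes the \emph{main boundedness (stability) lemma} to show that a naturally associated descending chain of closed subspaces of $E$ generated by the separating space must stabilise, and then uses irreducibility — the only closed invariant subspaces are $\lbrace 0 \rbrace$ and $E$ — together with a Baire-category construction to force the relevant separating spaces to vanish. I expect essentially all the difficulty of the theorem to be concentrated here; by contrast the reduction to the radical, the ideal property of $\mathfrak{S}(\psi)$, and the transfer of the separating space across $\pi$ are routine once the separating-space formalism is in place.
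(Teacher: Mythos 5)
You cannot be compared against the paper's own argument here, because the paper has none: this theorem is quoted as a classical result of Johnson, with a pointer to \cite[Theorem~5.1.5]{Dales}, and is then used as a black box throughout. Judged on its own merits, your proposal is correct and is in essence the classical proof found in the cited literature. The separating-space formalism, the closed-graph equivalence, the observation that surjectivity makes $\mathfrak{S}(\psi)$ a closed two-sided ideal of $B$, the passage to primitive ideals $P = \Ker(\pi)$ with $\pi$ the (contractive, since maximal modular left ideals are closed) representation on $B/L$, the fact that $\pi \circ \psi$ is again algebraically irreducible because $\pi(\psi(A)) = \pi(B)$, and the transfer $\pi(\mathfrak{S}(\psi)) \subseteq \mathfrak{S}(\pi \circ \psi) = \lbrace 0 \rbrace$ are all accurate and correctly assembled; intersecting over primitive ideals and invoking semisimplicity finishes the argument. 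The only caveat, which you yourself flag honestly, is that the proposal is a reduction rather than a self-contained proof: all of the depth is outsourced to the automatic continuity of algebraically irreducible representations of Banach algebras on Banach spaces (Johnson, 1967; see also Sinclair's treatment via the stability/main boundedness lemma). That theorem is genuinely prior to, not a restatement of, the result being proved, so there is no circularity — this is exactly the logical order of Johnson's original paper and of the standard texts — but a reader should understand that your proof is ``modulo'' a theorem of comparable difficulty to the one being established. Two small points of hygiene: the paper defines $\jacrad$ only for unital algebras via maximal left ideals, whereas the theorem concerns general Banach algebras, so your use of maximal \emph{modular} left ideals is the right fix; and one should note the degenerate case where $B$ has no such ideals, which by semisimplicity forces $B = \lbrace 0 \rbrace$ and makes the claim trivial.
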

For a modern discussion of this result we refer the reader to \cite[Theorem~5.1.5]{Dales}. In what follows we shall use this fundamental result without explicitly mentioning it.

We first observe that there is a large class of Banach spaces which obviously lack the SHAI property.
\begin{lemma}\label{ifcharnotshi}
Let $X$ be an infinite-dimensional Banach space such that $M_n(\mathbb{C})$ is a quotient of $\mathcal{B}(X)$ for some $n \in \mathbb{N}$. Then $X$ does not have the SHAI property.
\end{lemma}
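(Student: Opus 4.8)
The plan is to read off the failure of the SHAI property directly from its definition by exhibiting a single witnessing homomorphism that is surjective but not injective. By hypothesis $M_n(\mathbb{C})$ is a quotient of $\mathcal{B}(X)$, so there is a two-sided ideal $J \subseteq \mathcal{B}(X)$ and a surjective algebra homomorphism $\psi: \, \mathcal{B}(X) \rightarrow M_n(\mathbb{C})$ with $\Ker(\psi) = J$; equivalently, $\mathcal{B}(X)/J \cong M_n(\mathbb{C})$.

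The key observation is that $M_n(\mathbb{C})$ is itself an algebra of operators on a Banach space: identifying $\mathbb{C}^n$ with the $n$-dimensional Hilbert space $\ell_2^n$, we have $M_n(\mathbb{C}) = \mathcal{B}(\ell_2^n)$ as (Banach) algebras. Setting $Y := \ell_2^n$, which is a non-zero Banach space, the map $\psi$ becomes a surjective algebra homomorphism $\psi: \, \mathcal{B}(X) \rightarrow \mathcal{B}(Y)$ of precisely the shape quantified over in the definition of the SHAI property. It therefore suffices to check that this particular $\psi$ fails to be injective.

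For this I would use a trivial dimension count. Since $X$ is infinite-dimensional, $\mathcal{B}(X)$ is infinite-dimensional as a complex vector space: fixing a non-zero functional $\varphi \in X^*$ and a sequence of linearly independent vectors $(x_k)_{k \in \mathbb{N}}$ in $X$, the rank-one operators $z \mapsto \varphi(z)\, x_k$ are linearly independent in $\mathcal{B}(X)$. On the other hand $\dim_{\mathbb{C}} M_n(\mathbb{C}) = n^2 < \infty$. Were $\psi$ injective, it would be a linear bijection of $\mathcal{B}(X)$ onto $M_n(\mathbb{C})$, forcing $\dim_{\mathbb{C}} \mathcal{B}(X) = n^2$, a contradiction. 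Hence $\Ker(\psi) = J \neq \lbrace 0 \rbrace$, so $\psi$ is a surjective but non-injective algebra homomorphism onto $\mathcal{B}(Y)$, and $X$ does not have the SHAI property.

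There is essentially no obstacle in this argument; its entire content is recognizing $M_n(\mathbb{C})$ as $\mathcal{B}(\ell_2^n)$ and then comparing dimensions. The only point meriting a moment's care is that the SHAI property quantifies over \emph{all} non-zero Banach spaces $Y$, so producing one suitable $Y$ (here the finite-dimensional $\ell_2^n$) admitting a surjective-but-not-injective homomorphism is enough to falsify it. Continuity of $\psi$ is not needed for the argument, though it is automatic by Johnson's theorem since $M_n(\mathbb{C})$ is semisimple.
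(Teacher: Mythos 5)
Your proof is correct and follows essentially the same route as the paper: identify $M_n(\mathbb{C})$ with $\mathcal{B}(\mathbb{C}^n)$ and observe that the resulting surjection cannot be injective because $X$, hence $\mathcal{B}(X)$, is infinite-dimensional. The paper leaves the dimension count implicit where you spell it out with rank-one operators, but the argument is identical.
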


\begin{proof}
Let $\varphi: \, \mathcal{B}(X) \rightarrow M_n(\mathbb{C})$ be a surjective algebra homomorphism. Since $\mathcal{B}(\mathbb{C}^n) \simeq M_n(\mathbb{C})$ we immediately obtain that that there is a surjective algebra homomorphism $\psi: \, \mathcal{B}(X) \rightarrow \mathcal{B}(\mathbb{C}^n)$ which cannot be injective, since $X$ is infinite-dimensional.
\end{proof}

\begin{remark}
For any $n \in \mathbb{N}$ one can easily find an infinite-dimensional Banach space $X$ such that it satisfies the conditions of Lemma \ref{ifcharnotshi}, that is, $M_n(\mathbb{C})$ is a quotient of $\mathcal{B}(X)$. Indeed, let $E$ be an infinite-dimensional Banach space such that $\mathcal{B}(E)$ has a character $\varphi: \, \mathcal{B}(E) \rightarrow \mathbb{C}$. (Examples of such spaces are given below in Example \ref{nonex}.) Let $X:= \textstyle {\bigoplus_{i =1}^n E}$, then there is an isomorphism between the Banach algebras $\mathcal{B}(X)$ and $M_n(\mathcal{B}(E))$, this latter being the Banach algebra of $(n \times n)$-matrices with entries in $\mathcal{B}(E)$. Since every element $A \in \mathcal{B}(E)$ can be written uniquely as $A = \lambda I_E + T$ for some $\lambda \in \mathbb{C}$ and $T \in \Ker(\varphi)$, it is straightforward to check that
\begin{align}
\psi: \; M_n(\mathcal{B}(E)) \rightarrow M_n(\mathbb{C}); \quad (\lambda_{i,j} I_E + T_{i,j})_{i,j =1}^n \mapsto (\lambda_{i,j})_{i,j=1}^n
\end{align}
defines surjective algebra homomorphism. So in particular $M_n(\mathbb{C})$ is a quotient of $\mathcal{B}(X)$.

In fact, something much stronger can be said then the above: It was observed by Kania and Laustsen in \cite[page~1022]{klideal} that every complex, semisimple, finite-dimensional, unital algebra is isomorphic to $\mathcal{B}(X) / \mathcal{K}(X)$ for a suitable Banach space $X$.
\end{remark}

We recall that an infinite-dimensional Banach space $X$ is \textit{indecomposable}, if there are no closed, infinite-dimensional subspaces $Y,Z$ of $X$ such that $X \simeq Y \oplus Z$. A Banach space $X$ is \textit{hereditarily indecomposable} if every closed, infinite-dimensional subspace of $X$ is indecomposable.

The next example collects a variety of examples from the literature where $\mathcal{B}(X)$ is known to have a character, so $X$ does not have the SHAI property by Lemma \ref{ifcharnotshi}. In examples (1)--(3) this character is shown explicitly and in examples (4)--(7) the character is obtained from a commutative quotient on $\mathcal{B}(X)$. It is not intended to be an exhaustive list. 

\begin{example}\label{nonex}
None of the following spaces $X$ have the SHAI property:
\begin{enumerate}
\item $X$ is a \textit{complex} hereditarily indecomposable Banach space, since by \cite[Theorem~18]{GM0} $\mathcal{B}(X)$ has a character whose kernel is $\mathcal{S}(X)$,
\item $X= \mathcal{J}_p$ where $1<p< \infty$ and $\mathcal{J}_p$ is the $p^{th}$ James space, since by \cite[Paragraph~8]{edmit}, $\mathcal{B}(X)$ has a character whose kernel is $\mathcal{W}(X)$, see also \cite[Theorem~4.16]{laustsenmax1},
\item $X=C[0, \omega_1]$, where $\omega_1$ is the first uncountable ordinal, since by \cite[Paragraph~9]{edmit} $\mathcal{B}(X)$ has a character, see also \cite[Proposition~3.1]{lw},
\item $X= C[0, \omega_{\eta}]$, where $\eta$ is a regular cardinal, and $\omega_{\eta}$ is the smallest ordinal of cardinality $\aleph_{\eta}$, since by \cite[Section~4]{ogden} $\mathcal{B}(X)$ has a character,
\item $X=X_{\infty}$, where $X_{\infty}$ is the indecomposable but not hereditarily indecomposable Banach space constructed by Tarbard in \cite[Chapter~4]{Tarbard}, since $\mathcal{B}(X) / \mathcal{K}(X) \simeq \ell_1(\mathbb{N}_0)$, where the right-hand side is endowed with the convolution product,
\item $X=X_K$, where $K$ is a countable compact Hausdorff space and $X_K$ is the Banach space construced by Motakis, Puglisi, and Zisimopoulou in \cite[Theorem~B]{MPZ}, since $\mathcal{B}(X)/ \mathcal{K}(X) \simeq C(K)$,
\item $X=C(K_0)$, where is $K_0$ is the compact Hausdorff connected \\ ``Koszmider'' space constructed by Plebanek in \cite[Theorem~1.3]{plebanek}, since $\mathcal{B}(X) / \mathcal{W}(X) \simeq C(K_0)$, as shown in \cite[Proposition~3.3]{schlackow}, and it also follows from \cite[Theorem~6.5(i)]{dkkkl},
\item $X= \mathcal{G}$, where $\mathcal{G}$ is the Banach space constructed by Gowers in \cite{gowers1}, since $\mathcal{B}(X) / \mathcal{S}(X) \simeq \ell_{\infty} / c_0$, as shown in \cite[Corollary~8.3]{laustsenmax1}.
\end{enumerate}
\end{example}

The purpose of the following lemma is to show for a certain ``nice'' class of Banach spaces, when studying the SHAI property it is enough to restrict our attention to infinite-dimensional spaces $Y$.

\begin{lemma}\label{suffshi}
Let $X$ be a Banach space such that $X$ contains a complemented subspace isomorphic to $X \oplus X$. Then the following are equivalent:
\begin{enumerate}
\item $X$ has the SHAI property,
\item for any $Y$ infinite-dimensional Banach space any surjective algebra homomorphism $\psi: \, \mathcal{B}(X) \rightarrow \mathcal{B}(Y)$ is automatically injective.
\end{enumerate}
\end{lemma}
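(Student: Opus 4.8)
The implication (1) $\Rightarrow$ (2) is immediate, since every infinite-dimensional non-zero Banach space is in particular non-zero, so the whole content lies in the reverse implication. My plan is to show that, under hypothesis (2), the only conceivable way for (1) to fail is through a \emph{finite}-dimensional codomain, and then to rule this case out completely using the hypothesis on $X$. So suppose (2) holds, let $Y$ be non-zero and let $\psi: \mathcal{B}(X) \to \mathcal{B}(Y)$ be a surjective algebra homomorphism. If $Y$ is infinite-dimensional, then (2) hands us injectivity for free, so it suffices to treat the case $\dim Y = n$ for some $n \in \mathbb{N}$, where $\mathcal{B}(Y) \simeq M_n(\mathbb{C})$, and to derive a contradiction from the mere existence of such a $\psi$. (Note that $X$ itself is automatically infinite-dimensional: a non-zero finite-dimensional space cannot contain a complemented subspace isomorphic to its square for dimension reasons, while $X = \{0\}$ is vacuous.)

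The engine of the argument is a pair of idempotents produced from the hypothesis. Writing $X = R \oplus V$ with $R \simeq X \oplus X$, I would split $R = R_1 \oplus R_2$ with $R_1 \simeq X \simeq R_2$, so that $X = R_1 \oplus R_2 \oplus V$. Let $P, Q \in \mathcal{B}(X)$ be the bounded projections onto $R_1$ and onto $R_2$ along the respective complements. Then $P$ and $Q$ are mutually orthogonal idempotents, $PQ = QP = 0$, with $\Ran P = R_1 \simeq X$ and $\Ran Q = R_2 \simeq X$. The key observation is that an idempotent in $\mathcal{B}(X)$ whose range is isomorphic to $X$ is equivalent to $I_X$ in the sense of the preliminaries: if $v: \Ran P \to X$ is an isomorphism, then $a := vP$ and $b := v^{-1}$ (composed with the inclusion $\Ran P \hookrightarrow X$) satisfy $ab = I_X$ and $ba = P$, whence $P \sim I_X$, and likewise $Q \sim I_X$.

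Now I would push everything through $\psi$. Surjectivity forces $\psi(I_X)$ to be the identity of $\mathcal{B}(Y) = M_n(\mathbb{C})$, that is, $\psi(I_X) = I_n$. Since algebra homomorphisms preserve the relation $\sim$, from $P \sim I_X \sim Q$ we obtain $\psi(P) \sim I_n \sim \psi(Q)$ in $M_n(\mathbb{C})$. Here finite-dimensionality bites: equivalent idempotents in $M_n(\mathbb{C})$ have equal trace (because $\mathrm{tr}(cd) = \mathrm{tr}(dc)$), hence equal rank, so $\psi(P)$ and $\psi(Q)$ are idempotents of rank $n$ in $M_n(\mathbb{C})$ and are therefore both equal to $I_n$. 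But then $I_n = \psi(P)\psi(Q) = \psi(PQ) = \psi(0) = 0$, a contradiction since $n \geq 1$. This eliminates finite-dimensional $Y$ and completes (2) $\Rightarrow$ (1).

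The only genuinely delicate point I anticipate is verifying $P \sim I_X$ cleanly from the range isomorphism, i.e. the fact that equivalence of idempotents in $\mathcal{B}(X)$ is detected by the isomorphism type of the range; the remaining steps are formal. It is worth flagging that the second case is in fact \emph{unconditional}: what is really proved there is that $M_n(\mathbb{C})$ is never a quotient of $\mathcal{B}(X)$ once $X$ contains a complemented copy of $X \oplus X$, so hypothesis (2) is invoked only to dispatch the infinite-dimensional codomains. No analytic input is needed — in particular the argument uses neither continuity nor Johnson's theorem.
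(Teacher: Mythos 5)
Your proof is correct, and while its overall skeleton matches the paper's — (1) $\Rightarrow$ (2) is trivial, and the substance is showing that any surjective $\psi\colon \mathcal{B}(X) \rightarrow \mathcal{B}(Y)$ forces $Y$ to be infinite-dimensional — the way you exclude finite-dimensional $Y$ is genuinely different. The paper notes that $\dim Y < \infty$ would make $\Ker(\psi)$ a proper ideal of finite codimension in $\mathcal{B}(X)$, and then simply cites \cite[Propositions~1.9~and~2.3]{ringfinofopalgs} and \cite[Proposition~1.3.34]{Dales} for the fact that $\mathcal{B}(X)$ has no such ideals when $X$ contains a complemented copy of $X \oplus X$. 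You instead unwind that citation into a self-contained argument: the complemented copy of $X \oplus X$ yields mutually orthogonal idempotents $P \perp Q$ with $P \sim I_X \sim Q$ (this is precisely the ``proper infiniteness'' of $\mathcal{B}(X)$ that Laustsen's Proposition 1.9 records), homomorphisms preserve $\sim$ and send $I_X$ to $I_n$, and the trace on $M_n(\mathbb{C})$ then forces $\psi(P) = \psi(Q) = I_n$, contradicting $\psi(P)\psi(Q) = \psi(PQ) = 0$. Your verification of $P \sim I_X$ via $a = vP$ and $b = \iota \circ v^{-1}$ is sound and matches the paper's definition of equivalence of idempotents. What your route buys is an elementary, citation-free proof, plus the explicitly noted unconditional fact that $M_n(\mathbb{C})$ is never a quotient of $\mathcal{B}(X)$ under the hypothesis; what the paper's route buys is brevity and the formally stronger intermediate statement that $\mathcal{B}(X)$ has no proper finite-codimensional ideals at all — although by Wedderburn theory (a non-zero finite-dimensional unital algebra has some $M_k(\mathbb{C})$ as a quotient) that statement is essentially equivalent to the one you prove.
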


\begin{proof}
Let $Y$ be a non-zero Banach space and let $\psi: \, \mathcal{B}(X) \rightarrow \mathcal{B}(Y)$ be a surjective algebra homomorphism, we show that $Y$ must be infinite-dimensional. For assume towards a contradiction it is not; then clearly $\mathcal{B}(Y)$ is finite-dimensional, thus by $\mathcal{B}(X) / \Ker(\psi) \simeq \mathcal{B}(Y)$ we have that $\Ker(\psi)$ is finite-codimensional in $\mathcal{B}(X)$. But $X$ has a complemented subspace isomorphic to $X \oplus X$ therefore by successively applying \cite[Propositions~1.9~and~2.3]{ringfinofopalgs} and \cite[Proposition~1.3.34]{Dales} it follows that $\mathcal{B}(X)$ has no proper ideals of finite codimension, a contradiction.
\end{proof}

We recall that if $A,B$ are unital algebras and $\theta: \, A \rightarrow B$ is a surjective algebra homomorphism then $\theta[\jacrad(A)] \subseteq \jacrad(B)$.

\begin{lemma}\label{noninjbigkernel}
Let $X$ be a Banach space, let $B$ be a unital Banach algebra and let $\psi: \, \mathcal{B}(X) \rightarrow B$ be a continuous, surjective, non-injective algebra homomorphism. Then $\psi[\mathcal{E}(X)] \subseteq \jacrad(B)$. In particular, if $B$ is semisimple then $\mathcal{E}(X) \subseteq \Ker(\psi)$.
\end{lemma}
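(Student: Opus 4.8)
The plan is to verify the radical membership directly through the quasi-invertibility characterization of the Jacobson radical, exploiting that $\psi$ annihilates the finite-rank operators. First I would record two preliminary facts. Since $\psi$ is surjective, $\psi(I_X)$ acts as a two-sided identity on $B$, whence $\psi(I_X) = 1_B$; in particular $\psi$ carries invertible elements of $\mathcal{B}(X)$ to invertible elements of $B$. Next, since $\psi$ is continuous and not injective, $\Ker(\psi)$ is a non-zero closed two-sided ideal of $\mathcal{B}(X)$, and therefore contains the smallest non-zero closed two-sided ideal $\mathcal{A}(X)$; in particular every finite-rank operator lies in $\Ker(\psi)$. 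I would then invoke the standard characterization: an element $a$ of a unital algebra lies in its Jacobson radical if and only if $1 - ba$ is invertible for every $b$. By surjectivity, to prove $\psi(T) \in \jacrad(B)$ for a fixed $T \in \mathcal{E}(X)$ it thus suffices to show that $\psi(I_X - ST) = 1_B - \psi(S)\psi(T)$ is invertible in $B$ for every $S \in \mathcal{B}(X)$.

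Fix $T \in \mathcal{E}(X)$ and $S \in \mathcal{B}(X)$, and set $U := I_X - ST$. By the definition of inessentiality (applied with $-S$ in place of $S$), $U$ is a Fredholm operator. I would next argue that $U$ has index zero: since $\mathcal{E}(X)$ is an ideal, $-ST \in \mathcal{E}(X)$, so the path $t \mapsto I_X + t(-ST)$, $t \in [0,1]$, consists entirely of Fredholm operators, and as the index is locally constant on the Fredholm operators we get $\mathrm{ind}(U) = \mathrm{ind}(I_X) = 0$. The crucial step is now to perturb $U$ to an invertible operator by a finite-rank correction: because $U$ is Fredholm of index zero, there is a finite-rank operator $F$ with $V := U + F$ bijective, hence invertible (choose $F$ to vanish on a closed complement of $\Ker(U)$ and to map $\Ker(U)$ isomorphically onto a complement of the closed finite-codimensional subspace $\Ran(U)$). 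Applying $\psi$ and using $\psi(F) = 0$ gives $\psi(U) = \psi(V)$, which is invertible in $B$ since $V$ is invertible. This is precisely the invertibility of $1_B - \psi(S)\psi(T)$, so $\psi(T) \in \jacrad(B)$, and hence $\psi[\mathcal{E}(X)] \subseteq \jacrad(B)$.

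I expect the main subtlety to be the index-zero reduction rather than any delicate estimate. The naive route would invoke Atkinson's theorem to invert $U$ modulo $\mathcal{K}(X)$ and then push the parametrix relation through $\psi$; this fails in general because $\Ker(\psi)$ is only guaranteed to contain the approximable operators $\mathcal{A}(X)$, and $\psi$ need not annihilate all of $\mathcal{K}(X)$ when $X$ lacks the approximation property. Working modulo the finite-rank operators, which do lie in $\Ker(\psi)$, circumvents this, but it is available only because $\mathrm{ind}(U) = 0$, as a finite-rank (indeed compact) perturbation cannot alter the index. Finally, the ``in particular'' clause is immediate: if $B$ is semisimple then $\jacrad(B) = \lbrace 0 \rbrace$, so $\psi[\mathcal{E}(X)] = \lbrace 0 \rbrace$, that is, $\mathcal{E}(X) \subseteq \Ker(\psi)$.
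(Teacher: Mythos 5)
Your proof is correct, but it takes a genuinely different route from the paper's. Both arguments begin identically: continuity and non-injectivity force $\Ker(\psi)$ to be a non-zero closed two-sided ideal, hence $\mathcal{A}(X) \subseteq \Ker(\psi)$, so $\psi$ annihilates every finite-rank operator. From there the paper is very short: it factors $\psi$ as $\theta \circ \pi$ through the quotient $\mathcal{B}(X)/\mathcal{A}(X)$, uses the general fact that a surjective homomorphism of unital algebras maps radical into radical, and then cites Kleinecke's theorem, which identifies $\jacrad\bigl(\mathcal{B}(X)/\mathcal{A}(X)\bigr)$ with $\pi[\mathcal{E}(X)]$; the inclusion $\psi[\mathcal{E}(X)] \subseteq \jacrad(B)$ then drops out. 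You instead prove the radical membership by hand: via the quasi-invertibility characterization of $\jacrad(B)$ and surjectivity of $\psi$, you reduce to the invertibility of $\psi(I_X - ST)$ for each $S$, and you obtain it from Fredholm theory --- inessentiality of $T$ makes $I_X - ST$ Fredholm, the homotopy $t \mapsto I_X - tST$ through Fredholm operators forces index zero, and an index-zero Fredholm operator becomes invertible after a finite-rank perturbation, which $\psi$ cannot see. In effect you have reproved, in exactly the case needed, the direction of Kleinecke's theorem that the paper quotes as a black box. What your route buys is self-containedness: everything rests on standard index theory, and your closing observation --- that one must work modulo finite-rank (hence approximable) operators rather than modulo $\mathcal{K}(X)$, since without the approximation property there is no guarantee that $\mathcal{K}(X) \subseteq \Ker(\psi)$ --- is precisely the subtlety that would sink the naive Atkinson-parametrix argument, and you handle it correctly. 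What the paper's route buys is brevity and the sharper general statement ($\pi[\mathcal{E}(X)]$ \emph{equals} the radical of $\mathcal{B}(X)/\mathcal{A}(X)$), which places the lemma in its natural structural context.
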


\begin{proof}
Since $\psi$ is not injective $\mathcal{A}(X) \subseteq \Ker(\psi)$ holds and therefore there exists a unique surjective  algebra homomorphism $\theta: \, \mathcal{B}(X) / \mathcal{A}(X) \rightarrow B$ with $\theta \circ \pi = \psi$ and $\Vert \psi \Vert = \Vert \theta \Vert$, where $\pi: \, \mathcal{B}(X) \rightarrow \mathcal{B}(X) / \mathcal{A}(X)$ is the quotient map. Thus $\theta[\jacrad(\mathcal{B}(X) / \mathcal{A}(X))] \subseteq \jacrad(B)$, which by Kleinecke's theorem \cite[Theorem~5.5.9]{caradus} is equivalent to $\theta[\pi[\mathcal{E}(X)]] \subseteq \jacrad(B)$. This is equivalent to $\psi[\mathcal{E}(X)] \subseteq \jacrad(B)$, as required.
\end{proof}

\begin{lemma}\label{squaremax}
Let $X$ be a Banach space such that $\mathcal{E}(X)$ is a maximal ideal in $\mathcal{B}(X)$ and $X$ has a complemented subspace isomorphic to $X \oplus X$. Then $X$ has the SHAI property.
\end{lemma}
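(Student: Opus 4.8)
The plan is to argue by contradiction, exploiting the two structural lemmas already at hand. First I would use the hypothesis that $X$ has a complemented subspace isomorphic to $X \oplus X$ to invoke Lemma \ref{suffshi}: this reduces the verification of the SHAI property to showing that every surjective algebra homomorphism $\psi: \, \mathcal{B}(X) \rightarrow \mathcal{B}(Y)$ with $Y$ \emph{infinite-dimensional} is injective. Fixing such a $Y$ and such a $\psi$, suppose towards a contradiction that $\psi$ is not injective.

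Since $\mathcal{B}(Y)$ is semisimple, Johnson's theorem guarantees that $\psi$ is automatically continuous, so $\psi$ meets the hypotheses of Lemma \ref{noninjbigkernel}. Applying that lemma with $B = \mathcal{B}(Y)$ (again semisimple) yields $\mathcal{E}(X) \subseteq \Ker(\psi)$. Now $\Ker(\psi)$ is a proper two-sided ideal of $\mathcal{B}(X)$, proper because $\psi$ surjects onto the non-zero algebra $\mathcal{B}(Y)$, so that $\mathcal{B}(X)/\Ker(\psi) \simeq \mathcal{B}(Y) \neq \lbrace 0 \rbrace$. The decisive step is then to combine this with the maximality of $\mathcal{E}(X)$: since $\mathcal{E}(X) \subseteq \Ker(\psi) \subsetneq \mathcal{B}(X)$ and $\mathcal{E}(X)$ is a maximal ideal, we must have $\Ker(\psi) = \mathcal{E}(X)$.

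The final step is to read off the contradiction via the first isomorphism theorem. The identity $\Ker(\psi) = \mathcal{E}(X)$ gives an algebra isomorphism $\mathcal{B}(Y) \simeq \mathcal{B}(X)/\mathcal{E}(X)$, and maximality of $\mathcal{E}(X)$ makes the quotient $\mathcal{B}(X)/\mathcal{E}(X)$ a simple algebra; hence $\mathcal{B}(Y)$ is simple. But this is impossible: as recorded in the preliminaries, for infinite-dimensional $Y$ the algebra $\mathcal{B}(Y)$ is never simple, since $\mathcal{A}(Y)$ is a proper non-trivial closed two-sided ideal. This contradiction shows that $\psi$ must be injective, and by Lemma \ref{suffshi} this establishes the SHAI property for $X$.

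I expect the only genuinely delicate point to be the reduction to infinite-dimensional $Y$: the non-simplicity argument fails for finite-dimensional $Y$ (where $\mathcal{B}(Y) \simeq M_n(\mathbb{C})$ is simple), so the complemented-square hypothesis is doing essential work precisely through Lemma \ref{suffshi}. Everything else is a short chain of ideal-theoretic observations, with Johnson's automatic continuity theorem supplying the continuity needed to feed $\psi$ into Lemma \ref{noninjbigkernel}.
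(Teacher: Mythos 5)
Your proof is correct and follows essentially the same route as the paper's: reduce to infinite-dimensional $Y$ via Lemma \ref{suffshi}, apply Lemma \ref{noninjbigkernel} to get $\mathcal{E}(X) \subseteq \Ker(\psi)$, use maximality to force $\Ker(\psi) = \mathcal{E}(X)$, and derive a contradiction from the simplicity of $\mathcal{B}(X)/\mathcal{E}(X)$ versus the non-simplicity of $\mathcal{B}(Y)$. Your write-up is in fact slightly more explicit than the paper's, spelling out both the role of Johnson's theorem (which the paper uses tacitly) and the precise source of the contradiction (that $\mathcal{A}(Y)$ is a proper non-trivial ideal when $Y$ is infinite-dimensional).
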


\begin{proof}
Let $Y$ be an infinite-dimensional Banach space and let $\psi: \, \mathcal{B}(X) \rightarrow \mathcal{B}(Y)$ be a surjective algebra homomorphism. Assume towards a contradiction that $\psi$ in not injective. Since $\mathcal{B}(Y)$ is semisimple in view of Lemma \ref{noninjbigkernel} it follows that $\mathcal{E}(X) \subseteq \Ker(\psi)$ must hold. Since $\psi$ is surjective, $\Ker(\psi)$ is a proper ideal thus by maximality of $\mathcal{E}(X)$ in $\mathcal{B}(X)$ it follows that $\Ker(\psi) = \mathcal{E}(X)$. Thus $\mathcal{B}(X) / \mathcal{E}(X) \simeq \mathcal{B}(Y)$, where the right-hand side is simple, due to maximality of $\mathcal{E}(X)$ in $\mathcal{B}(X)$, which is a contradiction. Therefore $\psi$ must be injective thus by Lemma \ref{suffshi} the claim is proven. 
\end{proof}

\begin{remark}
We observe that the condition ``$X$ has a complemented subspace isomorphic to $X \oplus X$'' in the previous lemma cannot be dropped in general. Indeed, let $X$ be a hereditarily indecomposable Banach space, then $\mathcal{E}(X) = \mathcal{S}(X)$ is a maximal ideal in $\mathcal{B}(X)$ but by Example \ref{nonex} $(1)$ the space $X$ does not have the SHAI property. 
\end{remark}

\begin{proof}[Proof of Proposition \ref{ellpshi}]
Let $X$ be $c_0$ or $\ell_p$ for $1 \leq p < \infty$. Gohberg, Markus, and Feldman showed in \cite{gmf} that $\mathcal{A}(\ell_p) = \mathcal{K}(\ell_p) = \mathcal{S}(\ell_p) = \mathcal{E}(\ell_p)$ is the only closed, non-trivial, proper, two-sided ideal in $\mathcal{B}(\ell_p)$. In \cite[page~253]{laustsenloy}, Loy and Laustsen deduced that $\mathcal{W}(\ell_{\infty}) = \mathcal{X}(\ell_{\infty}) = \mathcal{S}(\ell_{\infty}) = \mathcal{E}(\ell_{\infty})$ is the unique maximal ideal in $\mathcal{B}(\ell_{\infty})$. Thus in both cases the result follows from Lemma \ref{squaremax}.
\end{proof}

We remark in passing that it was recently shown by W. B. Johnson, G. Pisier and G. Schechtman in \cite[Theorem~4.2]{jpsch} that $\mathcal{B}(\ell_{\infty})$ has continuum many distinct closed two-sided ideals, thus the use of Lemma \ref{squaremax} in the proof of Proposition \ref{ellpshi} is essential.

We recall that a Banach space $X$ is called \textit{complementably minimal} if every closed, infinite-dimensional subspace of $X$ contains a subspace which is complemented in $X$ and isomorphic to $X$.

\begin{proof}[Proof of Theorem \ref{arbdistortshi}]
Since $X$ is complementably minimal, it follows from \cite[Theorem~6.2]{whitley} that $\mathcal{S}(X)$ is the largest proper two-sided ideal in $\mathcal{B}(X)$. In particlar $\mathcal{E}(X) = \mathcal{S}(X)$ is maximal in $\mathcal{B}(X)$, thus Lemma \ref{squaremax} yields the claim.
	
We recall that Schlumprecht's space $\mathbf{S}$ is isomorphic to it is square and it is complementably minimal, as shown, for example, in \cite{schlumprecht2}, thus the first part of the theorem applies.
\end{proof}

In the following we show that for a Hilbert space $H$ of arbitrary density character, the projections lift from any quotient of $\mathcal{B}(H)$. In what follows, if $(X, \mu)$ is a measure space and $f \in L_{\infty}(X,\mu)$ then 
\begin{align}
M_f : \, L_2(X, \mu) \rightarrow L_2(X, \mu); \quad g \mapsto fg
\end{align}
is called the \textit{multiplication operator by $f$} and is clearly a bounded linear operator.

\begin{lemma}\label{hilblem}
Let $H$ be a Hilbert space and let $\mathcal{J}$ be a closed, two-sided ideal in $\mathcal{B}(H)$. For any projection $p \in \mathcal{B}(H)/ \mathcal{J}$ there exists a projection $P \in \mathcal{B}(H)$ such that $p= \pi(P)$, where $\pi: \, \mathcal{B}(H) \rightarrow \mathcal{B}(H)/ \mathcal{J}$ denotes the quotient map.	
\end{lemma}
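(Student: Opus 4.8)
The plan is to lift an arbitrary projection $p \in \mathcal{B}(H)/\mathcal{J}$ to a genuine projection in $\mathcal{B}(H)$, exploiting the $C^*$-structure that makes $\mathcal{B}(H)/\mathcal{J}$ a $C^*$-algebra whenever $\mathcal{J}$ is a closed two-sided (hence self-adjoint, by a standard Hilbert space argument) ideal. First I would take any $A \in \mathcal{B}(H)$ with $\pi(A) = p$. Since $p$ is self-adjoint I may replace $A$ by its real part $\tfrac{1}{2}(A + A^*)$ without disturbing $\pi(A) = p$, so I can assume $A$ is self-adjoint. Thus $\pi(A^2 - A) = p^2 - p = 0$, meaning $A^2 - A \in \mathcal{J}$; equivalently, the spectrum of $A$ clusters near $\{0,1\}$ modulo $\mathcal{J}$.

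The standard $C^*$-algebraic device is the continuous functional calculus: choose a continuous (indeed, I would take it to be a suitable polynomial or a locally constant function) map $f$ on the spectrum of $A$ that equals $0$ near $0$ and $1$ near $1$, and set $P := f(A)$. Because $A$ is self-adjoint, $f(A)$ is self-adjoint whenever $f$ is real-valued, and choosing $f$ to satisfy $f^2 = f$ on a neighbourhood of $\operatorname{spec}(A)$ forces $P^2 = P$ exactly, so $P$ is a genuine projection in $\mathcal{B}(H)$. The remaining point is that $\pi(P) = p$: one shows $f(A) - A \in \mathcal{J}$ by approximating $f$ uniformly by polynomials vanishing appropriately, using that $A^2 - A \in \mathcal{J}$ together with multiplicativity and continuity of $\pi$. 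Concretely, since $p$ is already idempotent, $f(p) = p$ for any $f$ with $f(0)=0, f(1)=1$, and $\pi(f(A)) = f(\pi(A)) = f(p) = p$ by the functional-calculus-compatibility of the $*$-homomorphism $\pi$.

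The main obstacle I anticipate is not the lifting argument itself, which is classical, but rather the role of the multiplication-operator apparatus flagged just before the statement, suggesting the intended proof sidesteps abstract functional calculus in favour of a concrete spectral/measure-theoretic construction on $L_2(X,\mu)$. For a general Hilbert space of arbitrary density character, $H \cong L_2(X,\mu)$ for a suitable measure space, and self-adjoint $A$ can be represented (via the spectral theorem) so that the cutoff function $f$ becomes a multiplication operator $M_f$; this is presumably why $M_f$ was introduced. I would therefore organize the proof around realizing $P = M_f(A)$ concretely and verifying $M_f(A) - A \in \mathcal{J}$ directly, which keeps everything inside the honest operator algebra and avoids invoking the $C^*$-quotient structure as a black box. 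The one delicate step requiring care is ensuring $f$ can be chosen so that $f \equiv f^2$ holds exactly on the relevant part of the spectrum while still satisfying $f - \mathrm{id} \to 0$ appropriately near the two cluster points $0$ and $1$; a piecewise-constant choice (value $0$ on $[-\varepsilon, \tfrac12]$ and $1$ on $(\tfrac12, 1+\varepsilon]$, say) handles this, though one must confirm measurability and boundedness so that $M_f \in L_\infty$ gives a legitimate bounded operator.
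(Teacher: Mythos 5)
Your proposal, as written, has a genuine gap, and its primary route cannot be repaired. The premise behind your continuous-functional-calculus argument --- that $A^2 - A \in \mathcal{J}$ forces the spectrum $\sigma(A)$ to cluster near $\lbrace 0,1 \rbrace$ --- is false for a general closed two-sided ideal $\mathcal{J}$: it is only the spectrum of $\pi(A)$ \emph{in the quotient} that lies in $\lbrace 0,1 \rbrace$, while $\sigma(A)$ itself can be all of $[0,1]$. Concretely, let $K$ be a nonseparable Hilbert space, $H = K \oplus K \oplus L_2[0,1]$, let $\mathcal{J} = \mathcal{X}(H)$ be the closed two-sided ideal of operators with separable range, and let $A = I_K \oplus 0 \oplus M_t$, where $M_t$ is multiplication by the independent variable. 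Then $A$ is self-adjoint, $A^2 - A = 0 \oplus 0 \oplus M_{t^2 - t} \in \mathcal{J}$, and $p := \pi(A)$ is a projection with $p \neq 0$ and $p \neq 1$ (both $A$ and $I_H - A$ have nonseparable range), yet $\sigma(A) = [0,1]$. Since $[0,1]$ is connected, every continuous $f$ with $f = f^2$ on $\sigma(A)$ is constant, so $f(A) \in \lbrace 0, I_H \rbrace$ and $\pi(f(A)) \in \lbrace 0, 1 \rbrace \neq p$. Thus no continuous $f$ of the kind you describe exists, and the (correct) identity $\pi(f(A)) = f(\pi(A))$ cannot help; this is precisely why the paper does not argue via the $C^*$-quotient functional calculus.

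Your fallback route --- represent $A = U^{-1} \circ M_f \circ U$ by the spectral theorem and take $P = U^{-1} \circ M_h \circ U$ with $h$ the class of the indicator $\mathbf{1}_{[\tilde{f} \geq 1/2]}$ --- is indeed the paper's construction, but the step you describe as ``verify $M_f(A) - A \in \mathcal{J}$ directly'' is exactly the crux, and neither mechanism you offer establishes it: uniform polynomial approximation is unavailable because $h$ is discontinuous, and ``compatibility of $\pi$ with functional calculus'' makes no sense for Borel functions (the element $h(\pi(A))$ is not even defined in the quotient). The paper's key device is an explicit $g \in L_{\infty}(X,\mu)$, namely $\tilde{g} = 1/(\tilde{f}-1)$ on $[\tilde{f} < 1/2]$ and $\tilde{g} = 1/\tilde{f}$ on $[\tilde{f} \geq 1/2]$, which is bounded by $2$ and satisfies the pointwise identity $g(f - f^2) = h - f$. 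Consequently $P - A = U^{-1} \circ M_{h-f} \circ U = (U^{-1} \circ M_g \circ U)(U^{-1} \circ M_{f - f^2} \circ U) \in \mathcal{J}$, using only that $U^{-1} \circ M_{f-f^2} \circ U = A - A^2 \in \mathcal{J}$ and that $\mathcal{J}$ is an ideal; no smallness of $\sigma(A)$ near $\lbrace 0,1 \rbrace$ enters anywhere. Your closing remark that $f - \mathrm{id}$ should tend to $0$ near the two ``cluster points'' gestures at the relevant phenomenon, but the actual content is the pointwise domination $\vert h(s) - s \vert \leq 2 \vert s - s^2 \vert$ converted into a factorization through a bounded multiplier; without formulating that factorization, the proof that $\pi(P) = p$ is missing.
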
	

\begin{proof}
Let $p \in \mathcal{B}(H)/ \mathcal{J}$ be a projection. There exists a self-adjoint $A \in \mathcal{B}(H)$ such that $p= \pi(A)$. By the spectral theorem for bounded self-adjoint operators \cite[Chapter~IX.,~Theorem~4.6]{conway} there exists a measure space $(X, \mu)$, a $\mu$-almost everywhere bounded, real-valued function $f$ on $X$ and an isometric isomorphism $U: \, H \rightarrow L_2(X, \mu)$ such that $A= U^{-1} \circ M_f \circ U$. Consequently
\begin{align}
\pi(U^{-1} \circ M_f \circ U) &= \pi(A) = p = p^2 = \pi(A^2) = \pi(U^{-1} \circ M_{f^2} \circ U),
\end{align}
which is equivalent to 
\begin{align}\label{inideal1}
U^{-1} \circ M_{f-f^2} \circ U = U^{-1} \circ (M_f - M_{f^2}) \circ U \in \mathcal{J}.
\end{align}
Let $\tilde{f}$ be a representative of the class $f$ and let $h$ be the class of $\mathbf{1}_{[\tilde{f} \geq 1/2]}$, the indicator function of the set $[\tilde{f} \geq 1/2]:= \lbrace x \in X : \, \tilde{f}(x) \geq 1/2 \rbrace$. Clearly $h \in L_{\infty}(X, \mu)$ is well-defined and $P:= U^{-1} \circ M_h \circ U \in \mathcal{B}(H)$ is a projection. We show that $p = \pi(P)$, which is equivalent to showing that $U^{-1} \circ M_{f-h} \circ U \in \mathcal{J}$. We first observe that it is enough to find $g \in L_{\infty}(X, \mu)$ such that $g(f-f^2)=h-f$. Indeed, if such a function $g$ were to exist then $M_g \circ M_{f-f^2} = M_{h-f}$ and consequently
\begin{align}
U^{-1} \circ M_{h-f} \circ U &= U^{-1} \circ M_g \circ M_{f-f^2} \circ U \notag \\
&= (U^{-1} \circ M_g \circ U) \circ (U^{-1} \circ M_{f-f^2} \circ U) \in \mathcal{J}
\end{align}
holds by Equation (\ref{inideal1}) and the fact that $\mathcal{J}$ is an ideal in $\mathcal{B}(H)$.

Thus let $\tilde{g}: \, X \rightarrow \mathbb{R}$ be the following function:
\begin{align}
\tilde{g}(x) := \left\{
\begin{array}{l l}
1/(\tilde{f}(x)-1) & \quad \text{if  } \tilde{f}(x) < 1/2 \\
1/ \tilde{f}(x) & \quad \text{otherwise.} \\
\end{array} \right.
\end{align}
Let $g$ be the class of $\tilde{g}$, clearly $g$ is $\mu$-almost everywhere bounded by $2$. A simple calculation shows that
\begin{align}
\tilde{g}(x)(\tilde{f}(x)-\tilde{f}^2(x)) &= \left\{
\begin{array}{l l}
(\tilde{f}(x)- \tilde{f}^2(x)/(\tilde{f}(x)-1) & \quad \text{if  } \tilde{f}(x) < 1/2 \\
(\tilde{f}(x)-\tilde{f}^2(x)/ \tilde{f}(x) & \quad \text{otherwise,} \\
\end{array} \right.
\end{align}
so $\tilde{g}(x)(\tilde{f}(x)- \tilde{f}^2(x)) = \mathbf{1}_{[\tilde{f} \geq 1/2]}(x) - \tilde{f}(x)$ holds for every $x \in X$. Consequently $g(f-f^2)=h-f$, which proves the claim.
\end{proof}

We recall that in a ring $R$ if $I \trianglelefteq R$ is a two-sided ideal and $p,q \in R$ are idempotents with $p \sim q$ then $p \in I$ if and only if $q \in I$. In a $C^*$-algebra $A$ an idempotent $e \in A$ is a projection if and only if $\Vert e \Vert \leq 1$.

The following lemma is straightforward, we omit its proof. 

\begin{lemma}\label{prelift1} \
\begin{enumerate}
\item Let $X$ be a Banach space and suppose $Q \in \mathcal{B}(X)$ is an idempotent such that $\Ran(Q)$ is isomorphic to its square. Then there exist mutually orthogonal idempotents $Q_1,Q_2 \in \mathcal{B}(X)$ with $Q_1,Q_2 \sim Q$ and $Q_1 +Q_2 = Q$.
\item Let $H$ be a Hilbert space and suppose $Q \in \mathcal{B}(H)$ is a projection with infinite-dimensional range. Then there exist mutually orthogonal projections $Q_1,Q_2 \in \mathcal{B}(H)$ with $Q_1,Q_2 \sim Q$ and $Q_1 +Q_2 = Q$.
\end{enumerate}
\end{lemma}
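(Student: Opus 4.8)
```latex
\textbf{Proof proposal.} The two statements are structurally identical, so I would prove (1) and (2) in parallel, using the same algebraic skeleton and only swapping ``idempotent'' for ``projection'' where the Hilbert space structure is needed.

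For part (1), the plan is to transport the decomposition of $\Ran(Q)$ into its square across the isomorphism. Write $W := \Ran(Q)$ and fix a linear homeomorphism $\Phi : W \rightarrow W \oplus W$. The two coordinate projections of $W \oplus W$ pull back through $\Phi$ to give two mutually orthogonal idempotents $E_1, E_2$ on $W$ with $E_1 + E_2 = I_W$ and $E_1 \sim E_2 \sim I_W$ (each $E_i$ being equivalent to $I_W$ because $W \simeq \Ran(E_i)$). Now extend these to all of $X$: since $Q$ is an idempotent, $X = W \oplus \Ker(Q)$, and I would define $Q_i := \iota \circ E_i \circ (Q\vert^{W})$, where $\iota : W \hookrightarrow X$ is the inclusion and $Q\vert^{W} \in \mathcal{B}(X, W)$ is $Q$ viewed as mapping onto its range. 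A direct computation then gives $Q_i^2 = Q_i$, $Q_1 Q_2 = 0 = Q_2 Q_1$, and $Q_1 + Q_2 = Q$. To verify $Q_i \sim Q$ one exhibits the factorisation witnesses explicitly: the equivalence $E_i \sim I_W$ inside $\mathcal{B}(W)$ supplies operators $a_i, b_i$ with $E_i = a_i b_i$ and $I_W = b_i a_i$, and conjugating by $Q\vert^{W}$ and $\iota$ produces elements of $\mathcal{B}(X)$ realising $Q_i \sim Q$ in the sense of the paper's definition.

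For part (2), I would follow the same route but inside the $C^*$-algebra $\mathcal{B}(H)$, so that the idempotents produced are genuine orthogonal projections. Here $\Ran(Q)$ is a closed infinite-dimensional subspace of $H$, hence itself a Hilbert space, and every infinite-dimensional Hilbert space is unitarily equivalent to its $\ell_2$-direct sum with itself. This yields an orthogonal decomposition $\Ran(Q) = H_1 \oplus H_2$ into two closed subspaces each isometrically isomorphic to $\Ran(Q)$; the associated orthogonal projections $P_1, P_2$ onto $H_1, H_2$ are self-adjoint, satisfy $P_1 + P_2 = $ the projection onto $\Ran(Q)$, and $P_1 \perp P_2$. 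Setting $Q_1 := P_1, Q_2 := P_2$ viewed inside $\mathcal{B}(H)$ (extended by zero on $\Ran(Q)^{\perp}$) gives mutually orthogonal projections summing to $Q$, and the Murray--von Neumann equivalence $Q_i \sim Q$ follows from a partial isometry implementing the isometric isomorphism $H_i \cong \Ran(Q)$.

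The only genuine obstacle is bookkeeping rather than substance: one must check that the equivalence $Q_i \sim Q$ holds in the precise ring-theoretic sense defined in the preliminaries (existence of $a, b$ with $Q_i = ab$ and $Q = ba$), and that conjugating the witnesses $a_i, b_i$ from $\mathcal{B}(W)$ up to $\mathcal{B}(X)$ really does realise $Q_i$ and $Q$ as the two products rather than something merely similar. This is a routine but slightly fiddly verification, which is presumably why the authors declare the lemma ``straightforward'' and omit the proof.
```
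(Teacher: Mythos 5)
Your proposal is correct: the Banach-space part (transporting the coordinate idempotents of $W \oplus W$ through the isomorphism $\Phi$ and conjugating the equivalence witnesses by $\iota$ and $Q\vert^{W}$, which indeed yields $A_iB_i = Q_i$ and $B_iA_i = Q$ since $Q\vert^{W}\circ\iota = I_W$) and the Hilbert-space part (orthogonal splitting of an orthonormal basis plus partial isometries) both check out against the paper's ring-theoretic definition of $\sim$. The paper itself omits the proof, declaring the lemma straightforward, so there is no argument to compare against; yours is exactly the routine verification the author intended.
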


\begin{corollary}\label{lift1} \
\begin{enumerate}
\item Let $X$ be a Banach space and let $\mathcal{J} \trianglelefteq \mathcal{B}(X)$ be a closed, two-sided ideal. Suppose $Q \in \mathcal{B}(X)$ is an idempotent such that $\Ran(Q)$ is isomorphic to its square and $Q \notin \mathcal{J}$. Then there exist mutually orthogonal idempotents $Q_1,Q_2 \in \mathcal{B}(X)$ with $Q_1,Q_2 \notin \mathcal{J}$ such that $\pi(Q_1), \pi(Q_2) < \pi(Q)$, where $\pi: \, \mathcal{B}(X) \rightarrow \mathcal{B}(X) / \mathcal{J}$ is the quotient map.
\item Let $H$ be a Hilbert space and let $\mathcal{J} \trianglelefteq \mathcal{B}(H)$ be a closed, two-sided ideal. Suppose $Q \in \mathcal{B}(H)$ projection such that $Q \notin \mathcal{J}$. Then there exist mutually orthogonal projections $Q_1,Q_2 \in \mathcal{B}(H)$ with $Q_1,Q_2 \notin \mathcal{J}$ such that $\pi(Q_1), \pi(Q_2) < \pi(Q)$, where $\pi: \, \mathcal{B}(H) \rightarrow \mathcal{B}(H) / \mathcal{J}$ is the quotient map.
\end{enumerate}
\end{corollary}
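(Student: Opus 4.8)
The plan is to reduce both statements directly to Lemma \ref{prelift1} together with the two ring-theoretic facts recalled immediately beforehand: that equivalent idempotents lie in a given two-sided ideal simultaneously, and that $\Ker(\pi) = \mathcal{J}$. I would treat part (1) in detail; part (2) is then verbatim with Lemma \ref{prelift1}(2) in place of Lemma \ref{prelift1}(1) and ``projection'' in place of ``idempotent'', noting that the closed two-sided ideal $\mathcal{J}$ in the $C^*$-algebra $\mathcal{B}(H)$ is automatically self-adjoint, so that $\mathcal{B}(H) / \mathcal{J}$ is again a $C^*$-algebra and $\pi(Q_1), \pi(Q_2)$ are genuine projections.

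First I would invoke Lemma \ref{prelift1}(1): since $\Ran(Q)$ is isomorphic to its square, there exist mutually orthogonal idempotents $Q_1, Q_2 \in \mathcal{B}(X)$ with $Q_1, Q_2 \sim Q$ and $Q_1 + Q_2 = Q$. Because $Q \notin \mathcal{J}$ and $Q_i \sim Q$, the recalled fact about equivalent idempotents forces $Q_1, Q_2 \notin \mathcal{J}$, which is the first assertion of the corollary.

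Next I would establish the order relations. Using $Q_1 Q_2 = 0 = Q_2 Q_1$ and $Q_i^2 = Q_i$, a one-line computation gives $Q Q_1 = (Q_1 + Q_2)Q_1 = Q_1 = Q_1(Q_1 + Q_2) = Q_1 Q$, so $Q_1 \leq Q$ by the definition of the partial order; applying the homomorphism $\pi$ yields $\pi(Q)\pi(Q_1) = \pi(Q_1) = \pi(Q_1)\pi(Q)$, that is $\pi(Q_1) \leq \pi(Q)$, and symmetrically $\pi(Q_2) \leq \pi(Q)$. For strictness I would use the relation $\pi(Q_1) + \pi(Q_2) = \pi(Q)$: if $\pi(Q_1) = \pi(Q)$ held, then $\pi(Q_2) = 0$, i.e.\ $Q_2 \in \Ker(\pi) = \mathcal{J}$, contradicting $Q_2 \notin \mathcal{J}$; hence $\pi(Q_1) \neq \pi(Q)$ and therefore $\pi(Q_1) < \pi(Q)$. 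The same argument with the roles of $Q_1$ and $Q_2$ exchanged gives $\pi(Q_2) < \pi(Q)$, completing the proof.

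There is no substantial obstacle here: the entire combinatorial content has already been packaged into Lemma \ref{prelift1}, and what remains is the bookkeeping verifying non-membership in $\mathcal{J}$, the order inequalities, and their strictness. The only point requiring a moment's care is the strictness, where one must remember that $Q_1$ and $Q_2$ play symmetric roles and that it is precisely the hypothesis $Q_2 \notin \mathcal{J}$ (available because $Q_2 \sim Q \notin \mathcal{J}$) that rules out the degenerate case $\pi(Q_1) = \pi(Q)$.
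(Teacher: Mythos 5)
Your proof is correct and follows essentially the same route as the paper's: Lemma \ref{prelift1} provides the orthogonal decomposition $Q = Q_1 + Q_2$ with $Q_1, Q_2 \sim Q$, equivalence of idempotents gives $Q_1, Q_2 \notin \mathcal{J}$, and strictness of $\pi(Q_i) < \pi(Q)$ follows because $\pi(Q_i) = \pi(Q)$ would force $\pi(Q_j) = 0$, i.e.\ $Q_j \in \Ker(\pi) = \mathcal{J}$, a contradiction. The paper's proof is a terser version of exactly this bookkeeping, and it likewise disposes of part (2) by substituting Lemma \ref{prelift1}(2), so your added remarks (the explicit computation $QQ_1 = Q_1 = Q_1Q$ and the observation that the quotient is again a $C^*$-algebra) are harmless elaborations rather than a different argument.
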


\begin{proof}
$(1)$ By Lemma \ref{prelift1} $(1)$ there exist mutually orthogonal idempotents $Q_1,Q_2 \in \mathcal{B}(X)$ with $Q_1 + Q_2 = Q$ and $Q_1,Q_2 \sim Q$. For $i \in \lbrace 1,2 \rbrace$ we immediately get $Q_i \leq Q$ and thus $\pi(Q_i) \leq \pi(Q)$. Since $Q_i \sim Q$, the condition $Q \notin \mathcal{J}$ is equivalent to $Q_i \notin \mathcal{J}$. Also, for $i,j \in \lbrace 1,2 \rbrace$ if $i \neq j$ then $Q_j = Q - Q_i$ thus $\pi(Q_i) \neq \pi(Q)$.

$(2)$ Immediate from the first part of this corollary and Lemma \ref{prelift1} $(2)$.
\end{proof}

We recall a folklore lifting result for ``Calkin'' algebras of Banach spaces, this will be essential in the proof of Theorem \ref{sumfindimshi}. A convenient reference for the proof of this lemma is \cite[Lemma~2.6]{boejohn}. It also follows from the much more general result \cite[Theorem C]{analyticlift}.

\begin{lemma}\label{liftcalkin}
Let $X$ be a Banach space and let $p \in \mathcal{B}(X)/ \mathcal{K}(X)$ be an idempotent. Then there exists an idempotent $P \in \mathcal{B}(X)$ with $p = \pi(P)$ where $\pi: \; \mathcal{B}(X) \rightarrow \mathcal{B}(X)/ \mathcal{K}(X)$ is the quotient map.
\end{lemma}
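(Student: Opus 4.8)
The plan is to lift $p$ to an arbitrary operator and then correct it to a genuine idempotent by means of the holomorphic (Riesz) functional calculus. First I would choose any $T \in \mathcal{B}(X)$ with $\pi(T) = p$; since $p$ is idempotent, its spectrum in the Calkin algebra $\mathcal{B}(X)/\mathcal{K}(X)$ satisfies $\sigma(p) \subseteq \lbrace 0,1 \rbrace$, and $T^2 - T \in \mathcal{K}(X)$. The goal is to produce an idempotent of the form $P = h(T)$, where $h$ is holomorphic and locally constant (equal to $0$ or $1$) on a neighbourhood of $\sigma(T)$ and satisfies $h \equiv 1$ near $1$ and $h \equiv 0$ near $0$. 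For such an $h$ we automatically have $P^2 = P$ because $h^2 = h$, and $\pi(P) = h(\pi(T)) = h(p) = p$ by naturality of the holomorphic functional calculus under the continuous unital homomorphism $\pi$, using that the Riesz idempotent of the idempotent $p$ at the spectral value $1$ equals $p$ itself.

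The construction of such an $h$ hinges on understanding $\sigma(T)$, and this is the step I expect to be the crux. For every $\lambda \in \mathbb{C} \setminus \lbrace 0,1 \rbrace$ the element $p - \lambda$ is invertible in the Calkin algebra (an explicit inverse is a linear combination of $p$ and the identity), so by Atkinson's theorem $T - \lambda I_X$ is a Fredholm operator. Since $\mathbb{C} \setminus \lbrace 0,1 \rbrace$ is connected and $T - \lambda I_X$ is invertible for $|\lambda|$ large, continuity and integer-valuedness of the Fredholm index force $\mathrm{ind}(T - \lambda I_X) = 0$ throughout $\mathbb{C} \setminus \lbrace 0,1 \rbrace$. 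The analytic family $\lambda \mapsto T - \lambda I_X$ is thus Fredholm of index zero on this connected open set and invertible somewhere, so by the analytic Fredholm theorem (equivalently, Riesz--Schauder theory away from the essential spectrum) the set of $\lambda \in \mathbb{C} \setminus \lbrace 0,1 \rbrace$ at which $T - \lambda I_X$ fails to be invertible is discrete and consists of eigenvalues of finite algebraic multiplicity. Consequently every accumulation point of $\sigma(T)$ lies in $\lbrace 0,1 \rbrace$, and since $\sigma(T)$ is compact it is a countable set clustering only at $0$ and $1$.

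With this in hand the separation is routine: all but finitely many points of $\sigma(T)$ lie in prescribed small disks about $0$ and $1$, so $\sigma(T)$ can be covered by finitely many pairwise disjoint open disks, one about $0$, one about $1$, and one about each of the finitely many remaining points. Setting $\Omega$ equal to their union and letting $h$ be $1$ on the disk about $1$ and $0$ on every other disk yields a locally constant (hence holomorphic) function on the neighbourhood $\Omega$ of $\sigma(T)$ with $h \equiv 1$ near $1$ and $h \equiv 0$ near $0$; the values assigned on the stray disks do not affect $\pi(P)$, since those points do not meet $\sigma(p) \subseteq \lbrace 0,1 \rbrace$. Defining $P := h(T)$ as the associated Riesz idempotent then completes the argument.

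The main obstacle is precisely the spectral-separation step: without knowing that the extra spectrum of the lift $T$ accumulates only at $0$ and $1$, one cannot guarantee a holomorphic $h$ separating the two spectral values, and it is Fredholm theory in the Calkin algebra --- constancy of the index on the connected region $\mathbb{C} \setminus \lbrace 0,1 \rbrace$ together with the analytic Fredholm theorem --- that supplies this. Everything else (the choice of lift, the verification that $P$ is idempotent, and the identity $\pi(P) = p$) is formal once the functional calculus is available.
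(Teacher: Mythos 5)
Your proof is correct, and it is essentially the argument behind the sources the paper cites: the paper itself gives no proof of this lemma, recording it as a folklore fact with references to \cite[Lemma~2.6]{boejohn} and \cite[Theorem~C]{analyticlift}. Your three ingredients --- Atkinson's theorem placing the essential spectrum of a lift $T$ inside $\lbrace 0,1 \rbrace$, index-constancy on the connected set $\mathbb{C} \setminus \lbrace 0,1 \rbrace$ plus the punctured-neighbourhood (analytic Fredholm) theorem forcing $\sigma(T)$ to cluster only at $0$ and $1$, and the Riesz functional calculus with $P := h(T)$, $P^2 = P$ and $\pi(P) = h(p) = p$ by naturality of the calculus and the resolvent formula for idempotents --- are exactly the ingredients of the proof in \cite[Lemma~2.6]{boejohn}. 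The only cosmetic difference is that the multi-disk covering can be avoided: since only finitely many spectral points of $T$ lie in the annulus $\lbrace \lambda : 1/4 \leq \vert \lambda - 1 \vert \leq 1/2 \rbrace$, one may choose a single circle centred at $1$ with radius in $(1/4,1/2)$ missing $\sigma(T)$ and take $P$ to be the associated Riesz projection; the finitely many stray Riesz points then fall on whichever side of this contour they happen to lie, without affecting either $P^2 = P$ or the computation $\pi(P) = \frac{1}{2\pi i}\int_{\Gamma} (\lambda - p)^{-1}\, d\lambda = p$.
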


\begin{proposition}\label{hilbnominproj} \
\begin{enumerate}
\item Let $X$ be a Banach space such that every infinite-dimensional complemented subspace of $X$ is isomorphic to its square. \\ Then $\mathcal{B}(X) / \mathcal{K}(X)$ does not have minimal idempotents.
\item Let $H$ be a Hilbert space and let $\mathcal{J} \trianglelefteq \mathcal{B}(H)$ a non-zero, closed, two-sided ideal. Then $\mathcal{B}(H)/ \mathcal{J}$ does not have minimal projections.	
\end{enumerate}
\end{proposition}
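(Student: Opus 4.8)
The plan is to prove both parts by contradiction, exploiting the machinery built in Corollary \ref{lift1} together with the lifting lemmas. Suppose that $\mathcal{B}(X)/\mathcal{K}(X)$ (respectively $\mathcal{B}(H)/\mathcal{J}$) has a minimal idempotent (respectively minimal projection) $p$. The strategy is to lift $p$ to an idempotent $P$ in the ambient algebra $\mathcal{B}(X)$ (respectively $\mathcal{B}(H)$), then split $P$ into two mutually orthogonal idempotents lying strictly below $p$ in the quotient, thereby contradicting minimality of $p$.

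For part (2), I would first observe that since $\mathcal{J}$ is a non-zero closed two-sided ideal of $\mathcal{B}(H)$, it contains the compacts, and so $\mathcal{B}(H)/\mathcal{J}$ is a $C^*$-algebra in which every projection lifts to a projection $P \in \mathcal{B}(H)$ by Lemma \ref{hilblem}. The key point is that $P$ must have infinite-dimensional range: if $\Ran(P)$ were finite-dimensional then $P$ would be compact, hence $P \in \mathcal{J}$, forcing $p = \pi(P) = 0$, contradicting that $p$ is a non-zero projection. With $P \notin \mathcal{J}$ a projection of infinite-dimensional range, Corollary \ref{lift1}(2) supplies mutually orthogonal projections $Q_1, Q_2 \notin \mathcal{J}$ with $\pi(Q_1), \pi(Q_2) < \pi(P) = p$. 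Since $Q_i \notin \mathcal{J}$ means $\pi(Q_i) \neq 0$, each $\pi(Q_i)$ is a non-zero projection strictly below $p$, contradicting minimality of $p$.

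For part (1), the argument runs in parallel but with idempotents in place of projections. Given a minimal idempotent $p \in \mathcal{B}(X)/\mathcal{K}(X)$, lift it to an idempotent $P \in \mathcal{B}(X)$ via Lemma \ref{liftcalkin}. Again $P$ cannot have finite-dimensional range, for otherwise $P \in \mathcal{K}(X)$ and $p = 0$. Thus $\Ran(P)$ is an infinite-dimensional complemented subspace of $X$ (being the range of an idempotent), and by hypothesis it is isomorphic to its square. This is precisely the hypothesis needed to invoke Corollary \ref{lift1}(1), which yields mutually orthogonal idempotents $Q_1, Q_2 \notin \mathcal{K}(X)$ with $\pi(Q_1), \pi(Q_2) < \pi(P) = p$; as before each $\pi(Q_i)$ is a non-zero idempotent strictly below $p$, contradicting minimality.

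The main obstacle, and the only place requiring genuine care, is verifying that the lifted idempotent $P$ has infinite-dimensional range so that the splitting results of Corollary \ref{lift1} actually apply. I expect this to hinge cleanly on the observation that a finite-rank idempotent is compact and hence lands in the ideal, trivialising $p$ in the quotient; this uses $\mathcal{K}(X) \subseteq \mathcal{J}$ in part (2), which follows because any non-zero closed two-sided ideal of $\mathcal{B}(H)$ contains the compact operators. The remaining steps—translating $\pi(Q_i) \neq \pi(P)$ together with $\pi(Q_i) \leq \pi(P)$ into the strict inequality $\pi(Q_i) < p$, and reading off that $\pi(Q_i) \neq 0$ from $Q_i \notin \mathcal{J}$—are immediate from the definitions of the order relation and of the ideal, and should not present any difficulty.
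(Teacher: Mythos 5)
Your proposal is correct and follows essentially the same route as the paper: lift the idempotent (resp.\ projection) via Lemma \ref{liftcalkin} (resp.\ Lemma \ref{hilblem}) and then split it below using Corollary \ref{lift1}, the contradiction framing being equivalent to the paper's direct argument. If anything, you are slightly more careful than the paper in part (2), where your observation that $\mathcal{K}(H) \subseteq \mathcal{J}$ forces the lifted projection to have infinite-dimensional range is exactly what justifies applying the splitting result (whose underlying Lemma \ref{prelift1}(2) needs infinite-dimensional range), a point the paper passes over with ``clearly''.
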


\begin{proof}
$(1)$ Let $p \in \mathcal{B}(X) / \mathcal{K}(X)$ be a non-zero idempotent. By Lemma \ref{liftcalkin} there exists an idempotent $P \in \mathcal{B}(X)$ with $p = \pi(P)$, where $\pi: \, \mathcal{B}(X) \rightarrow \mathcal{B}(X) / \mathcal{K}(X)$ is the quotient map. Clearly $P \notin \mathcal{K}(X)$, equivalently $\Ran(P)$ is infinite-dimensional. Thus by the hypothesis it is isomorphic to its square, consequently Corollary \ref{lift1} $(1)$ implies that there exists an idempotent $Q \in \mathcal{B}(X)$ such that $Q \notin I$ and $\pi(Q) < \pi(P)$.

$(2)$ Let $p \in \mathcal{B}(H) / \mathcal{J}$ be a non-zero projection. By Lemma \ref{hilblem} there exists a projection $P \in \mathcal{B}(H)$ with $p = \pi(P)$, where $\pi: \, \mathcal{B}(H) \rightarrow \mathcal{B}(H) / \mathcal{J}$ is the quotient map. Clearly $P \notin \mathcal{J}$ thus by Corollary \ref{lift1} $(2)$ there exists a projection $Q \in \mathcal{B}(H)$ such that $Q \notin \mathcal{J}$ and $\pi(Q) < \pi(P)$.
\end{proof}

We show that Proposition \ref{hilbnominproj} $(2)$ can be strengthened with the aid of the following simple observation. It is certainly well known among experts, however, we could not locate its proof in the literature, thus we include it here for the convenience of the reader.

\begin{lemma}
If a $C^*$-algebra has minimal idempotents then it has minimal projections.
\end{lemma}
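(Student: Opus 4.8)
The plan is to reduce the existence of a minimal projection to the well-known fact that in a $C^*$-algebra every idempotent is \emph{similar} to a projection, together with the observation that similarity is implemented by an inner automorphism and hence preserves the order $\leq$ on idempotents. Concretely, I would start from a minimal idempotent $e$, which is necessarily non-zero, and first produce an invertible $s$ and a projection $p$ with $p = s e s^{-1}$. The inner automorphism $\Phi \colon x \mapsto s x s^{-1}$ is an algebra automorphism; it sends idempotents to idempotents and, since the relation $q \leq e$ is expressed purely by the equations $e q = q = q e$, it preserves $\leq$ in both directions. Consequently $\Phi$ restricts to an order-isomorphism from the non-zero idempotents dominated by $e$ onto those dominated by $p = \Phi(e)$, so $p$ is again a minimal idempotent (and $p \neq 0$ since $e \neq 0$). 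Finally, any non-zero projection $q$ with $q < p$ would in particular be a non-zero idempotent strictly below $p$, so minimality of $p$ among idempotents forces $p$ to be minimal among projections; this yields the desired minimal projection.

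The technical heart is the construction of $s$ and $p$, which is where the $C^*$-structure (the involution) is used; in a general Banach algebra an idempotent need not be similar to a self-adjoint one. Working in the unitization $\widetilde{A}$ if $A$ is non-unital, I would set
\[
h \;=\; 1 + (e - e^*)(e^* - e).
\]
Since $e - e^*$ is skew-adjoint, $(e-e^*)(e^*-e) = -(e-e^*)^2$ is positive, so $h \geq 1$ is positive and invertible. A direct computation gives $he = e e^* e = eh$, so $h$ (and hence $h^{-1}$) commutes with $e$ and, taking adjoints, with $e^*$. Then $p := e e^* h^{-1} = h^{-1} e e^*$ is self-adjoint, and using $e e^* e = he$ one checks $p^2 = p$, so $p$ is a projection, together with the relations $p e = e$ and $e p = p$. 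From these last two identities the element $s := 1 - p + e$ is invertible with inverse $1 - e + p$ and satisfies $s e = p s$, that is, $p = s e s^{-1}$. When $A$ is non-unital all of this takes place in $\widetilde{A}$, but $e, p \in A$ and, as $A$ is an ideal of $\widetilde{A}$, the automorphism $\Phi$ maps $A$ onto $A$, so minimality is tested inside $A$ throughout.

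I expect the main obstacle to be the bookkeeping in the second paragraph: verifying that $h$ commutes with $e$, that $p$ is idempotent, and that $s$ is invertible with $s e s^{-1} = p$. These computations are short but must be done carefully, since the whole argument hinges on passing from an idempotent to a genuinely self-adjoint representative within the \emph{same} similarity class. It is crucial that one uses similarity (an inner automorphism), and not merely the coarser equivalence $\sim$ appearing elsewhere in the paper, because it is similarity that transports the partial order $\leq$ and therefore the notion of minimality.
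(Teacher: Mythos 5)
Your proof is correct, and it takes a genuinely different route from the paper. The paper obtains a projection $p$ with $p \sim e$ (algebraic equivalence: $ab = p$, $ba = e$) by citing an exercise in R{\o}rdam--Larsen--Laustsen, and then must work to transfer minimality across $\sim$, since equivalence does not interact well with the order $\leq$: given a non-zero projection $q \leq p$ it forms $f := bqa$, checks that $f$ is a non-zero idempotent with $f \leq e$, invokes minimality of $e$ to get $f = e$, and then pushes back to conclude $q = p$. You instead construct the projection explicitly via $h = 1 + (e - e^*)(e^* - e)$ and $p = ee^*h^{-1}$, which yields the strictly stronger relation of \emph{similarity} $p = ses^{-1}$ with $s = 1 - p + e$; all of your computations check out ($he = eh = ee^*e$, $p^2 = p = p^*$, $pe = e$, $ep = p$, $s^{-1} = 1 - e + p$, $se = e = ps$), and your handling of the non-unital case is sound since $A$ is an ideal in $\widetilde{A}$, so $p \in A$ and $\Phi$ restricts to an automorphism of $A$. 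The payoff of your approach is that the transfer of minimality becomes trivial: an inner automorphism preserves the defining equations $eq = q = qe$ of the order $\leq$ in both directions, so it is an order isomorphism on non-zero idempotents, whereas the paper's transfer step is the clever (and less transparent) part of its argument. What the paper's route buys is brevity modulo the citation; what yours buys is a self-contained proof (you essentially re-prove the cited exercise with a stronger conclusion) and a conceptually cleaner reduction, at the cost of the explicit $C^*$-computations. One small remark: your closing claim that in a general Banach algebra an idempotent need not be similar to a "self-adjoint" element is slightly misphrased (there is no involution in a general Banach algebra), but this is a side comment that does not affect the proof.
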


\begin{proof}
Let $A$ be a $C^*$-algebra and suppose $e \in A$ is a minimal idempotent. By \cite[Exercise~3.11(i)]{rordam} there exists a projection $p \in A$ with $p \sim e$. Thus there exist $a,b \in A$ such that $ab=p$ and $ba=e$, consequently $ae=pa$ and $bp=eb$. We show that $p \in A$ is a minimal projection. Since $e \neq 0$ it is clear that $p \neq 0$. Let $q \in A$ be a non-zero projection with $q \leq p$, this is, $pq=q$ and $qp=q$. We define $f:=bqa$, and observe that $f \in A$ is a non-zero idempotent. Indeed, $f^2 = bqabqa = bqpqa = bqa =f$ and $f \neq 0$ otherwise $0 = afb = abqab = pqp = q$ which is impossible. Let us observe that $f \leq e$. Indeed, $ef =ebqa = bpqa = bqa =f$ and similarly $fe=f$ holds. Since $e \in A$ is a minimal idempotent it follows that $e=f$ and consequently $aeb=afb$ holds, equivalently $pab = abqab$ equivalently $p =pqp$ which is just $p=q$. This shows that $p \in A$ is a minimal projection.
\end{proof}

\begin{corollary}\label{hilbnoidemp}
Let $H$ be a Hilbert space and let $\mathcal{J} \trianglelefteq \mathcal{B}(H)$ be a non-zero, closed, two-sided ideal. Then $\mathcal{B}(H)/ \mathcal{J}$ does not have minimal idempotents.
\end{corollary}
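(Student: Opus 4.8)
The plan is to obtain this corollary as an immediate consequence of the two results that directly precede it, combined via contraposition. The quotient $\mathcal{B}(H) / \mathcal{J}$ is a $C^*$-algebra (a closed two-sided ideal in a $C^*$-algebra is automatically self-adjoint, so the quotient carries an involution making it a $C^*$-algebra), and for $C^*$-algebras we have just established the implication ``minimal idempotents $\Rightarrow$ minimal projections''. Since Proposition \ref{hilbnominproj} $(2)$ tells us that $\mathcal{B}(H) / \mathcal{J}$ has \emph{no} minimal projections, the contrapositive of the preceding lemma forces it to have no minimal idempotents.

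Concretely, I would proceed as follows. First I would record that $\mathcal{B}(H) / \mathcal{J}$ is a $C^*$-algebra, so that the preceding lemma applies to it. Second, I would invoke Proposition \ref{hilbnominproj} $(2)$ to conclude that $\mathcal{B}(H) / \mathcal{J}$ has no minimal projections. Third, I would argue by contradiction: if $\mathcal{B}(H) / \mathcal{J}$ possessed a minimal idempotent, then by the preceding lemma it would also possess a minimal projection, contradicting the previous step. Hence $\mathcal{B}(H) / \mathcal{J}$ has no minimal idempotents.

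There is essentially no obstacle here; the corollary is a formal chaining of the two prior statements. The only point requiring a word of justification is that the quotient is genuinely a $C^*$-algebra, which is standard and needs only the self-adjointness of closed ideals in $C^*$-algebras. Everything substantial has already been done in the lifting of projections (Lemma \ref{hilblem}) underpinning Proposition \ref{hilbnominproj} $(2)$ and in the idempotent-to-projection passage of the preceding lemma, so the proof is a one-line deduction.
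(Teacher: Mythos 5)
Your proposal is correct and is exactly the paper's intended argument: the corollary is stated without proof precisely because it follows by chaining Proposition~\ref{hilbnominproj}~(2) with the contrapositive of the immediately preceding lemma, applied to the $C^*$-algebra $\mathcal{B}(H)/\mathcal{J}$. Your one point of care --- that the quotient of a $C^*$-algebra by a closed two-sided ideal is again a $C^*$-algebra, since such ideals are automatically self-adjoint --- is the only justification needed and is the right one.
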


Before we prove Theorem \ref{hilbshi}, let us remark here that the case where $H$ is separable immediately follows from well-known facts. Indeed, let $Y$ be a non-zero Banach space and let $\psi: \, \mathcal{B}(H) \rightarrow \mathcal{B}(Y)$ be a continuous, surjective algebra homomorphism. Since $\Ker(\psi)$ is a non-trivial, closed, two-sided ideal in $\mathcal{B}(H)$, by the ideal classification result due to Calkin (\cite{calkin}), $\Ker(\psi) = \lbrace 0 \rbrace$ or $\Ker(\psi) = \mathcal{K}(H)$ must hold. In the latter case, $\text{Cal}(H) := \mathcal{B}(H) / \mathcal{K}(H) \simeq \mathcal{B}(Y)$. (We remark in passing that the ideal of compact operators $\mathcal{K}(H)$ coincides with the operator norm-closure of the ideal of finite-rank operators on $H$, since $H$ has a Schauder basis.) Clearly $\text{Cal}(H)$ is simple and infinite-dimensional. If $Y$ is infinite-dimensional, then $\mathcal{B}(Y)$ is not simple, which is impossible; if $Y$ is finite-dimensional then so is $\mathcal{B}(Y)$, a contradiction. Thus $\psi$ must be injective.

\begin{proof}[Proof of Theorem \ref{hilbshi}]
Let $H$ be a Hilbert space. Let $Y$ be a Banach space and assume towards a contradiction that there exists a surjective, non-injective algebra homomorphism $\psi: \, \mathcal{B}(H) \rightarrow \mathcal{B}(Y)$. Then $\Ker(\psi)$ is non-zero and $\mathcal{B}(H) / \Ker(\psi)$ is isomorphic to $\mathcal{B}(Y)$. This is a contradiction since $\mathcal{B}(H) / \Ker(\psi)$ has no minimal idempotents by Corollary \ref{hilbnoidemp}, whereas $\mathcal{B}(Y)$ clearly does.
\end{proof}

\begin{remark}
	In the proof of Theorem \ref{hilbshi} the spectral theorem played a key role, hence the use of \textit{complex} Hilbert spaces was essential. We show now that the theorem remains true for \textit{real} Hilbert spaces. 

In order to to this, we shall need the notion of the \textit{complexification} of real Banach and Hilbert spaces, and real Banach algebras. We refer the interested reader to \cite[Section~13]{Bonsall} and \cite[Chapter~I, Section~3]{Rickart} for the necessary background information. 

Let $H$ be a real Hilbert space of arbitrary density character, we show that $H$ has the SHAI property. Assume towards a contradiction that there is a non-zero real Banach space $Y$ and a surjective, non-injective homomorphism $\varphi: \, \mathcal{B}(H) \rightarrow \mathcal{B}(Y)$ of real Banach algebras. Let $\widehat{\mathcal{B}(H)}$ and $\widehat{\mathcal{B}(Y)}$ denote the complexifications of the real Banach algebras $\mathcal{B}(H)$ and $\mathcal{B}(Y)$, respectively. We define the map
\begin{align}
\psi: \; \widehat{\mathcal{B}(H)} \rightarrow \widehat{\mathcal{B}(Y)}; \quad (T,S) \mapsto (\varphi(T), \varphi(S)),
\end{align}
this is easily seen to be a surjective homomorphism of complex Banach algebras. Since $\varphi$ is not injective, there is a non-zero $S \in \mathcal{B}(H)$ with $\varphi(S) =0$. Thus $\psi(S,S) = (\varphi(S), \varphi(S))=(0,0)$, hence $\psi$ is not injective. However, $\widehat{\mathcal{B}(H)} \simeq \mathcal{B}(\hat{H})$ and $\widehat{\mathcal{B}(Y)} \simeq \mathcal{B}(\hat{Y})$ as complex Banach algebras, thus there is a surjective, non-injective algebra homomorphism  $\theta: \, \mathcal{B}(\hat{H}) \rightarrow \mathcal{B}(\hat{Y})$ of complex Banach algebras. Since $\hat{H}$ is a complex Hibert space this is impossible in view of Theorem \ref{hilbshi}. Therefore $H$ has the SHAI property, as required.
\end{remark}

We recall the following piece of notation: If $\ell_2^n$ denotes the $n$-dimensional Banach space $\mathbb{C}^n$ with the $\ell_2$-norm, then
\begin{align}
\left( \bigoplus\limits_{n \in \mathbb{N}} \ell_2^n \right)_{\ell_1} := \left\lbrace (x_n)_{n \in \mathbb{N}} : \; (\forall n \in \mathbb{N})(x_n \in \ell_2^n), \; \sum\limits_{n \in \mathbb{N}} \Vert x_n \Vert < \infty\right\rbrace
\end{align}
is a Banach space with the norm $\left\Vert (x_n)_{n \in \mathbb{N}} \right\Vert := \sum\limits_{n \in \mathbb{N}} \Vert x_n \Vert$.

Similarly,
\begin{align}
\left( \bigoplus\limits_{n \in \mathbb{N}} \ell_2^n \right)_{c_0} := \left\lbrace (x_n)_{n \in \mathbb{N}} : \; (\forall n \in \mathbb{N})(x_n \in \ell_2^n), \; \lim\limits_{n \rightarrow \infty} \Vert x_n \Vert =0 \right\rbrace
\end{align}
is a Banach space with the norm $\left\Vert (x_n)_{n \in \mathbb{N}} \right\Vert := \sup\limits_{n \in \mathbb{N}} \Vert x_n \Vert$.

\begin{example}\label{shiex}
For the following (non-Hilbertian) Banach spaces $X$ every infinite-dimensional complemented subspace of $X$ is isomorphic to its square therefore by Proposition \ref{hilbnominproj} $(1)$ the Calkin algebra $\mathcal{B}(X) / \mathcal{K}(X)$ does not have minimal idempotents:
\begin{enumerate}
\item $X= c_0(\lambda)$, where $\lambda$ is an infinite cardinal, since by \cite[Proposition~2.8]{acgjm} every infinite-dimensional complemented subspace of $c_0(\lambda)$ is isomorphic to $c_0(\kappa)$ for some infinite cardinal $\kappa \leq \lambda$, and $c_0(\kappa) \simeq c_0(\kappa) \oplus c_0(\kappa)$,
\item $X= \ell_p$  where $p \in [1, \infty) \backslash \lbrace 2 \rbrace$, since by Pe\l{}czy\'{n}ski's theorem (\cite{pelczynski1}) every infinite-dimensional complemented subspace of $\ell_p$ is isomorphic to $\ell_p$ and $\ell_p \simeq \ell_p \oplus \ell_p$,
\item $X= \ell_{\infty}$, since every infinite-dimensional complemented subspace of $\ell_{\infty}$ is isomorphic to $\ell_{\infty}$ by Lindenstrauss' theorem (\cite{lindenstrauss1}) and $\ell_{\infty} \simeq \ell_{\infty} \oplus \ell_{\infty}$,
\item $X= \ell_{\infty}^c(\lambda)$, where $\lambda$ is an infinite cardinal, since by \cite[Theorem~1.4]{johnkaniaschecht} every infinite-dimensional complemented subspace of $\ell_{\infty}^c(\lambda)$ is isomorphic to $\ell_{\infty}$ or $\ell_{\infty}^c(\kappa)$ for some infinite cardinal $\kappa \leq \lambda$, and $\ell_{\infty}^c(\kappa) \simeq \ell_{\infty}^c(\kappa) \oplus \ell_{\infty}^c(\kappa)$,
\item $X= C[0, \omega^{\omega}]$, where $\omega$ is the first infinite ordinal, since by \cite[Theorem~3]{benyamini1} every infinite-dimensional complemented subspace of $C[0, \omega^{\omega}]$ is isomorphic to $c_0$ or $C[0, \omega^{\omega}]$ and $C[0, \omega^{\omega}] \simeq C[0, \omega^{\omega}] \oplus C[0, \omega^{\omega}]$ by \cite[Remark 2.25 and Lemma 2.26]{rosenthal},
\item $X= \textstyle( \bigoplus_{n \in \mathbb{N}} \ell_2^{n} )_Y$ where $Y$ is $c_0$ or $\ell_1$, since by \cite[Corollary~8.4 and Theorem~8.3]{bclt} every infinite-dimensional complemented subspace of $X$ is isomorphic to $Y$ or $X$ and $X \simeq X \oplus X$ by \cite[Corollary~7(i)]{ckl}.
\end{enumerate}
\end{example}

Before we recall two important results of Laustsen--Loy--Read and Laustsen--Schlumprecht--Zs\'ak, let us remind the reader of the following terminology. For Banach spaces $X$ and $Y$ the symbol $\overline{\mathscr{G}}_Y(X)$ denotes the closed, two-sided ideal of operators on $X$ which \textit{factor through $Y$ approximately}, that is, the closed linear span of the set $\lbrace ST : \, S \in \mathcal{B}(Y,X), T \in \mathcal{B}(X,Y) \rbrace$.

\begin{theorem}\cite[Corollary~5.6]{llr}, \cite[Theorem~2.12]{ltzs}\label{idcharsumfin}
Let $X = \textstyle( \bigoplus_{n \in \mathbb{N}} \ell_2^{n} )_Y$ where $Y$ is $c_0$ or $\ell_1$. Then the lattice of closed, two-sided ideals in $\mathcal{B}(X)$ is given by
\begin{align}
\lbrace 0 \rbrace \subsetneq \mathcal{K}(X) \subsetneq \overline{\mathscr{G}}_Y(X) \subsetneq \mathcal{B}(X).
\end{align}
\end{theorem}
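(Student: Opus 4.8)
The plan is to prove that the four displayed ideals are pairwise distinct and linearly ordered, and then that every closed two-sided ideal coincides with one of them. The key external input I would use throughout is the complemented-subspace dichotomy recorded in Example~\ref{shiex}$(6)$: up to isomorphism the only infinite-dimensional complemented subspaces of $X$ are $Y$ and $X$, and moreover $X \simeq X \oplus X$ and $Y \simeq Y \oplus Y$. I would also use that $X$ has a Schauder basis, obtained by concatenating orthonormal bases of the blocks $\ell_2^n$, hence the approximation property, so that $\mathcal{A}(X) = \mathcal{K}(X)$; consequently every nonzero closed two-sided ideal contains the finite-rank operators and therefore contains $\mathcal{K}(X)$.

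First I would settle the properness of the chain. Since $X$ is infinite-dimensional, $\lbrace 0 \rbrace \subsetneq \mathcal{K}(X)$. Next, taking the canonical complemented copy of $Y$ in $X$ — the closed span of one distinguished unit vector from each block, with inclusion $J \colon Y \to X$ and projection $Q \colon X \to Y$ satisfying $QJ = I_Y$ — the idempotent $P_Y := JQ$ lies in $\overline{\mathscr{G}}_Y(X)$ and is non-compact, so $\mathcal{K}(X) \subsetneq \overline{\mathscr{G}}_Y(X)$. For the top inclusion I would show $I_X \notin \overline{\mathscr{G}}_Y(X)$: a finite linear combination of operators factoring through $Y$ itself factors through a finite direct sum of copies of $Y$, which is isomorphic to $Y$ because $Y \simeq Y \oplus Y$; so if $I_X$ lay in the closure there would be an operator $R$ factoring through $Y$ with $\Vert I_X - R \Vert < 1$, forcing $R$ to be invertible and hence $I_X = R^{-1}R$ to factor through $Y$. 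This would realise $X$ as a complemented subspace of $Y$, which is impossible, since the complemented subspaces of $c_0$ (respectively $\ell_1$) are, up to isomorphism, only $c_0$ (respectively $\ell_1$) and the finite-dimensional spaces, whereas $X \not\simeq Y$. Thus $\overline{\mathscr{G}}_Y(X) \subsetneq \mathcal{B}(X)$.

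The substantive part is completeness, which I would reduce to two ideal-generation statements, writing $\langle T \rangle$ for the closed two-sided ideal generated by $T \in \mathcal{B}(X)$. (L2): if $T \in \overline{\mathscr{G}}_Y(X)$ is non-compact then $\langle T \rangle = \overline{\mathscr{G}}_Y(X)$. (L3): if $T \notin \overline{\mathscr{G}}_Y(X)$ then $\langle T \rangle = \mathcal{B}(X)$. Granting these, an arbitrary closed two-sided ideal $\mathcal{J}$ is handled by cases: if $\mathcal{J} \in \lbrace \lbrace 0 \rbrace, \mathcal{K}(X) \rbrace$ we are done; otherwise $\mathcal{J} \supseteq \mathcal{K}(X)$ and $\mathcal{J}$ contains a non-compact operator, so either $\mathcal{J} \subseteq \overline{\mathscr{G}}_Y(X)$, whence (L2) gives $\mathcal{J} = \overline{\mathscr{G}}_Y(X)$, or $\mathcal{J}$ contains some $T \notin \overline{\mathscr{G}}_Y(X)$, whence (L3) gives $\mathcal{J} = \mathcal{B}(X)$. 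For the algebra behind (L2) I would use that a non-compact operator in $\overline{\mathscr{G}}_Y(X)$ fixes a copy of $Y$, i.e.\ there are $A \in \mathcal{B}(Y,X)$ and $B \in \mathcal{B}(X,Y)$ with $BTA = I_Y$; then, setting $\iota := TA$ and $\rho := B$, one has $\rho \iota = I_Y$ and the idempotent $E := \iota \rho = T(AB) \in T \, \mathcal{B}(X) \subseteq \langle T \rangle$ has range isomorphic to $Y$, and inserting $E$ into an arbitrary generator $SU$ of $\overline{\mathscr{G}}_Y(X)$ via the identity $SU = (S\rho) \, E \, (\iota U)$, with $S\rho, \iota U \in \mathcal{B}(X)$, yields $\overline{\mathscr{G}}_Y(X) \subseteq \langle E \rangle \subseteq \langle T \rangle$; as also $T \in \overline{\mathscr{G}}_Y(X)$, equality follows.

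The main obstacle, and the reason the theorem is genuinely deep, is (L3) — equivalently the maximality of $\overline{\mathscr{G}}_Y(X)$ — which rests on the operator-theoretic strengthening of Example~\ref{shiex}$(6)$ established in \cite{bclt}, \cite{llr}, and \cite{ltzs}: every operator that does not factor through $Y$ must fix a copy of $X$, i.e.\ there exist $A, B \in \mathcal{B}(X)$ with $BTA = I_X$. Granting this, $I_X \in \langle T \rangle$ at once gives $\langle T \rangle = \mathcal{B}(X)$; the companion (and easier) structural fact, that every non-compact operator fixes a copy of $Y$, supplies the input to (L2). Passing from the subspace dichotomy to this operator dichotomy is the hard step: one must show that an operator which is ``spread out'' across the finite-dimensional summands $\ell_2^n$ — that is, not approximable by maps compressing $X$ into $Y$ — reproduces a complemented copy of the whole space $X$ on a suitable block subspace. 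This gliding-hump and block-basis analysis across the $\ell_2^n$, combined with the complemented-subspace classification, is precisely the technical content imported from the cited papers, and it is where essentially all the difficulty resides; the surrounding steps above are then routine Banach-algebra manipulations.
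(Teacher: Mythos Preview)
The paper does not supply a proof of this theorem at all: it is quoted verbatim as an external result from \cite[Corollary~5.6]{llr} and \cite[Theorem~2.12]{ltzs}, and is then \emph{used} as a black box in the proof of Theorem~\ref{sumfindimshi}. So there is no ``paper's own proof'' to compare against.

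That said, your outline is a reasonable reconstruction of how the argument in \cite{llr} and \cite{ltzs} actually runs, and you correctly isolate the genuine content: the reduction to the two generation statements (L2) and (L3) is straightforward Banach-algebra bookkeeping, the strictness of the chain is easy, and all the depth sits in the operator dichotomy (every $T \in \mathcal{B}(X)$ either factors approximately through $Y$ or fixes a complemented copy of $X$), which in turn rests on the complemented-subspace classification from \cite{bclt}. Your honest acknowledgement that this last step is precisely what is imported from the cited papers, and that it is where the work lies, is exactly right. One small point worth tightening: in (L2) you assert that a non-compact $T \in \overline{\mathscr{G}}_Y(X)$ admits $A,B$ with $BTA = I_Y$; this is true but is itself a nontrivial structural fact about $X$ (essentially that non-compact operators on $X$ preserve a complemented copy of $Y$), and deserves the same ``imported from \cite{llr}/\cite{ltzs}'' caveat you attach to (L3).
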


\begin{proof}[Proof of Theorem \ref{sumfindimshi}]
Let $Z$ be a Banach space and let $\psi: \, \mathcal{B}(X) \rightarrow \mathcal{B}(Z)$ be a surjective algebra homomorphism. Since $X \simeq X \oplus X$, by Lemma \ref{suffshi} we may suppose that $Z$ is infinite-dimensional. Since $\mathcal{B}(X) / \Ker(\psi) \simeq \mathcal{B}(Z)$, by Theorem \ref{idcharsumfin} it is enough to show that neither $\Ker(\psi) = \mathcal{K}(X)$ nor $\Ker(\psi) = \overline{\mathscr{G}}_Y(X)$ can hold. The case $\Ker(\psi) = \overline{\mathscr{G}}_Y(X)$ is not possible, since $\overline{\mathscr{G}}_Y(X)$ is a maximal two-sided ideal in $\mathcal{B}(X)$ by Theorem \ref{idcharsumfin} and therefore $\mathcal{B}(X) / \overline{\mathscr{G}}_Y(X)$ is simple as a Banach algebra whereas $\mathcal{B}(Z)$ is not, since $Z$ is infinite-dimensional. To see that $\Ker(\psi) = \mathcal{K}(X)$ cannot hold we observe that $\mathcal{B}(X) / \mathcal{K}(X)$ does not have minimal idempotents by Example \ref{shiex} $(6)$ whereas $\mathcal{B}(Z)$ has continuum many. Consequently  $\Ker(\psi) = \lbrace 0 \rbrace$ must hold, thus proving the claim.
\end{proof}

Finally in this section we shall establish some permanence properties of Banach spaces with the SHAI property. We recall a trivial observation:

\begin{remark}\label{idealsquarezero}
If $X$ is an infinite-dimensional Banach space and $J$ is a closed, two-sided ideal of $\mathcal{B}(X)$ such that $A^2 = 0$ for all $A \in J$ then $J= \lbrace 0 \rbrace$. This follows from the fact that $\mathcal{A}(X)$ is the smallest non-trivial, closed, two-sided ideal in $\mathcal{B}(X)$ and $\mathcal{A}(X)$ has an abundance of non-zero rank-one idempotents.
\end{remark}

\begin{proof}[Proof of Proposition \ref{directsumshi}]
Let $P,Q \in \mathcal{B}(E)$ be idempotents with $F= \Ran(P)$ and $G= \Ran(Q)$. Then $P+Q = I_E$ and $PQ = 0= QP$. Now let $X$ be a non-zero Banach space and let $\psi: \, \mathcal{B}(E) \rightarrow \mathcal{B}(X)$ be a surjective algebra homomorphism. Then $Y:= \Ran(\psi(P))$ and $Z:= \Ran(\psi(Q))$ are closed (complemented) subspaces of $X$. Let us fix $T \in \mathcal{B}(F)$, we observe that $\psi(P \vert_F \circ T \circ P \vert^F) \vert_Y \in \mathcal{B}(Y)$ holds. The only thing we need to check is that the range of $\psi(P \vert_F \circ T \circ P \vert^F) \vert_Y$ is contained in $Y$ which is clearly true since $\psi(P) \circ \psi(P \vert_F \circ T \circ P \vert^F) \circ \psi(P) = \psi(P \vert_F \circ T \circ P \vert^F)$. Consequently the map
\begin{align}
\varphi: \; \mathcal{B}(F) \rightarrow \mathcal{B}(Y); \quad T \mapsto \psi(P \vert_F \circ T \circ P \vert^F) \vert_Y
\end{align}
is well-defined. It is immediate to see that $\varphi$ is a linear map. To see that it is multiplicative, it is enough to observe that $P \vert^F \circ P \vert_F = I_F$ thus by multiplicativity of $\psi$, for any $T,S \in \mathcal{B}(F)$ we obtain $\varphi
(T) \circ \varphi(S) = \varphi(T \circ S)$.

We show that $\varphi$ is surjective. To see this we fix an $R \in \mathcal{B}(Y)$. Then $\psi(P) \vert_Y \circ R \circ \psi(P) \vert^Y \in \mathcal{B}(X)$ so by surjectivity of $\psi$ it follows that there exists $A \in \mathcal{B}(E)$ such that $\psi(A) = \psi(P) \vert_Y \circ R \circ \psi(P) \vert^Y$. Consequently $\psi(P \circ A \circ P) = \psi(P) \circ \psi(A) \circ \psi(P) = \psi(P) \vert_Y \circ R \circ \psi(P) \vert^Y$ and thus by the definition of $\varphi$ we obtain 
\begin{align}
\varphi(P \vert^F \circ A \circ P \vert_F) &=  \psi(P \vert_F \circ P \vert^F \circ A \circ P \vert_F \circ P \vert^F) \vert_Y = \psi(P \circ A \circ P) \vert_Y \notag \\
&= \left( \psi(P) \vert_Y \circ R \circ \psi(P) \vert^Y \right) \Big\vert_Y = R.
\end{align}
This proves that $\varphi$ is a surjective algebra homomorphism. Similarly we can show that
\begin{align}\label{thetahom}
\theta: \; \mathcal{B}(G) \rightarrow \mathcal{B}(Z); \quad T \mapsto \psi(Q \vert_G \circ T \circ Q \vert^G) \big\vert_Z
\end{align}
is a well-defined, surjective algebra homomorphism. Assume first that $Y$ and $Z$ are both non-trivial subspaces of $X$. Since both $F$ and $G$ have the SHAI property it follows that $\varphi$ and $\theta$ are injective. Now let $A \in \Ker(\psi)$ be arbitrary. Then $\psi(A)=0$ implies 
\begin{align}
\varphi (P \vert^F \circ A \circ P \vert_F) &= \psi(P \vert_F \circ  P \vert^F \circ A \circ P \vert_F \circ P \vert^F) \big\vert_Y = \psi (P \circ A \circ P) \vert_Y \notag \\
&= \psi(P) \circ \psi(A) \circ  \psi(P) \vert_Y = 0.
\end{align}
Since $\varphi$ is injective it follows that $P \vert^F \circ A \circ P \vert_F =0$. Using the injectivity of $\theta$ a similar argument shows that $Q \vert^G \circ A \circ Q \vert_G = 0$. We recall that $E \simeq F \oplus G$ and thus every $A \in \mathcal{B}(E)$ can be represented as the $(2 \times 2)$-matrix
\begin{align}
\begin{bmatrix}
P \vert^F \circ A \circ P \vert_F & P \vert^F \circ A \circ Q \vert_G \\
Q \vert^G \circ A \circ P \vert_F& Q \vert^G \circ A \circ Q \vert_G \end{bmatrix}.
\end{align}
From the previous we obtain that whenever $A \in \Ker(\psi)$ then $A$ has the off-diagonal matrix form 
\begin{align}\label{offdiag}
A =
\begin{bmatrix}
0& P \vert^F \circ A \circ Q \vert_G \\
Q \vert^G \circ A \circ P \vert_F& 0
\end{bmatrix}.
\end{align}
On the one hand, since $\Ker(\psi)$ is an ideal in $\mathcal{B}(X)$, we obviously have that $A^2 \in \Ker(\psi)$ whenever $A \in \Ker(\psi)$, thus $A^2$ also has the off-diagonal form
\begin{align}
A^2 =
\begin{bmatrix}
0& P \vert^F \circ A^2 \circ Q \vert_G \\
Q \vert^G \circ A^2 \circ P \vert_F& 0
\end{bmatrix}.
\end{align}
On the other hand, the product of two $(2 \times 2)$ off-diagonal matrices is diagonal and therefore by Equation (\ref{offdiag})
\begin{align}
P \vert^F \circ A^2 \circ Q \vert_G &= 0, \notag \\
Q \vert^G \circ A^2 \circ P \vert_F  &= 0
\end{align}
must also hold. Consequently $A^2 = 0$, thus by Remark \ref{idealsquarezero} the equality $\Ker(\psi) = \lbrace 0 \rbrace$ must hold, equivalently, $\psi$ is injective.

Let us observe that both $Y= \lbrace 0 \rbrace$ and $Z = \lbrace 0 \rbrace$ cannot hold. Indeed, if both $\psi(Q)$ and $\psi(P)$ were zero, then we had $0 = \psi(P+Q) = \psi(I_E) = I_X$, contradicting that $X$ is non-zero. Thus without loss of generality we may assume $Y = \lbrace 0 \rbrace$ and $Z \neq \lbrace 0 \rbrace$. Hence $\psi(P) =0$, thus $\psi(Q) = \psi(P)+ \psi(Q) = \psi(P+Q) = \psi(I_E) = I_X$. This is equivalent to $Z = \Ran(\psi(Q)) = X$, and thus $\mathcal{B}(Z) = \mathcal{B}(X)$. Therefore $\theta : \, \mathcal{B}(G) \rightarrow \mathcal{B}(X)$, defined in Equation (\ref{thetahom}) is a surjective algebra homomorphism. Since $G$ has the SHAI property and $X$ is non-zero, it follows that $\theta$ is injective. Let $A \in \mathcal{B}(E)$ be such that $A \in \Ker(\psi)$. Then
\begin{align}
\theta(Q \vert^G \circ A \circ Q \vert_G) &= \psi (Q \vert_G \circ Q \vert^G \circ A \circ Q \vert_G \circ Q \vert^G) \notag \\
&= \psi (Q \circ A \circ Q) = \psi(Q) \circ \psi(A) \circ \psi(Q) =0.
\end{align}
Since $\theta$ is injective, this is equivalent to $Q \vert^G \circ A \circ Q \vert_G =0$ which in turn is equivalent to $Q \circ A \circ Q =0$. We observe that $Q \neq 0$, otherwise $I_X = \psi(Q) = 0$ which contradicts the fact that $X$ is non-zero. Hence we can choose $x \in \Ran(Q)$ and $\xi \in  E^*$ norm one vectors with $\langle x, \xi \rangle =1$. Assume towards a contradiction that $\psi$ is not injective. Then in particular $x \otimes \xi \in \mathcal{F}(E) \subseteq \Ker(\psi)$, consequently $Q \circ (x \otimes \xi) \circ Q =0$. Thus $0 = (Q \circ (x \otimes \xi) \circ Q)x = \langle Qx, \xi \rangle Qx = \langle x, \xi \rangle x =x$, a contradiction. Hence $\psi$ is injective, and therefore we conclude that $E$ has the SHAI property.
\end{proof}

From Proposition \ref{directsumshi} we immediately obtain the following corollary.

\begin{corollary}\label{permshi1}
If $N \in \mathbb{N}$ and $\lbrace E_i \rbrace_{i =1}^N$ is set of Banach spaces with the SHAI property then $\bigoplus_{i=1}^N E_i$ has the SHAI property.
\end{corollary}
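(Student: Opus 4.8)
The plan is to prove the statement by a routine induction on $N$, using Proposition \ref{directsumshi} as the single engine. The base case $N = 1$ requires nothing: the space $\bigoplus_{i=1}^{1} E_i = E_1$ has the SHAI property by hypothesis. All the content is carried by the inductive step, which is an immediate application of the two-summand case.

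For the inductive step I would assume the claim holds for some $N \in \mathbb{N}$ and establish it for $N+1$. Set $E := \bigoplus_{i=1}^{N+1} E_i$ and regroup the direct sum by separating off the last summand, namely $E = F \oplus G$ where $F := \bigoplus_{i=1}^{N} E_i$ and $G := E_{N+1}$. Here $F$ and $G$ are closed subspaces of $E$ and $E$ is their internal direct sum, since a finite direct sum of Banach spaces splits in this way; this is exactly the hypothesis required by Proposition \ref{directsumshi}. By the inductive hypothesis, $F$ has the SHAI property, and $G = E_{N+1}$ has the SHAI property by assumption.

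Applying Proposition \ref{directsumshi} to the decomposition $E = F \oplus G$ then yields that $E = \bigoplus_{i=1}^{N+1} E_i$ has the SHAI property, completing the induction. There is no genuine obstacle here: the only point to keep in mind is that Proposition \ref{directsumshi} is stated for two summands, so one must peel off a single factor at each stage rather than invoke it for all $N$ summands at once. Consequently $\bigoplus_{i=1}^{N} E_i$ has the SHAI property for every $N \in \mathbb{N}$, as claimed.
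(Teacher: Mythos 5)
Your proof is correct and follows exactly the route the paper intends: the paper derives Corollary \ref{permshi1} as an immediate consequence of Proposition \ref{directsumshi}, and the implicit argument is precisely your induction on $N$, peeling off one summand at each step. Nothing is missing.
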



\section{Constructing surjective, non-injective homomorphisms from $\mathcal{B}(Y_X)$ to $\mathcal{B}(X)$}
	
\subsection{First remarks}

\subsubsection{Ordinals as topological spaces and spaces of continuous functions thereof}
If $\alpha$ is an ordinal, then $\alpha^+$ denotes its ordinal successor. Equipped with the order topology,  $\alpha$ and $\alpha^+$ are locally compact and compact Hausdorff spaces, respectively. It is well-known that the one-point (or Alexandroff) compactification of $\alpha$ is $\alpha^+$. In line with the general convention, we let $[0, \alpha):= \alpha$ and $[0, \alpha]:= \alpha^+$. We recall that the first uncountable ordinal is denoted by $\omega_1$.

If $K$ is a compact Hausdorff space then $C(K)$ denotes Banach space of complex-valued functions on $K$, with respect to the supremum norm. The Banach space $C[0, \omega_1]$ is called the \textit{Semadeni space}, since he showed in \cite{semadeni} that $C[0, \omega_1]$ is not isomorphic to its square. If $L$ is a locally compact Hausdorff space, and $\tilde{L}:= L \cup \lbrace \infty \rbrace$ is its one-point compactification, then we introduce $C_0(L):= \lbrace g \in C(\tilde{L}) : \, g(\infty) =0 \rbrace$, the Banach space of continuous functions vanishing at infinity, with respect to the supremum norm. In this notation
\begin{align}
C_0[0, \omega_1) = \lbrace g \in C[0, \omega_1] : \, g(\omega_1)= 0 \rbrace. 
\end{align}
For a countable ordinal $\alpha$ let $\mathbf{1}_{[0, \alpha]}$ denote indicator function of the interval $[0, \alpha]$. Since $[0, \alpha]$ is clopen, it follows that $\mathbf{1}_{[0, \alpha]} \in C_0[0, \omega_1)$.
Also, by a theorem of Rudin \cite[Theorem~6]{Rudinatom}, the Banach space $C[0, \omega_1]^*$ is isometrically isomorphic to the Banach space
\begin{align}
\ell_1(\omega_1^+)= \left\lbrace f: \, [0, \omega_1] \rightarrow \mathbb{C} \, : \, \sum\limits_{\alpha \leq \omega_1} \vert f(\alpha) \vert < \infty \right\rbrace.
\end{align}
The following definition is essential for our purposes: 

A subset $D \subseteq [0, \omega_1)$ is called a \textit{club subset} if $D$ is a closed and unbounded subset of $[0, \omega_1)$. 

The following elementary lemma plays a crucial role in the main theorem of this section, it can be found for example in \cite[Lemma~3.4]{jech}.

\begin{lemma}\label{countableinterclub}
A countable intersection of club subsets is a club subset.
\end{lemma}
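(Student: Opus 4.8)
The statement to prove is Lemma \ref{countableinterclub}: a countable intersection of club subsets of $[0,\omega_1)$ is again a club subset.

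\medskip

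The plan is to verify the two defining properties of a club separately: closedness and unboundedness. Let $\{D_n\}_{n \in \mathbb{N}}$ be a countable family of club subsets of $[0,\omega_1)$ and set $D := \bigcap_{n \in \mathbb{N}} D_n$. Closedness is the routine half: an arbitrary intersection of closed sets is closed in the order topology, so $D$ is automatically closed, and I would dispose of this in one line. Concretely, if $\alpha < \omega_1$ is a limit point of $D$, then $\alpha$ is a limit point of each $D_n$, hence $\alpha \in D_n$ for every $n$ by closedness of each $D_n$, so $\alpha \in D$. The entire difficulty is concentrated in showing that $D$ is \emph{unbounded}.

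\medskip

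For unboundedness, the key idea is a diagonal/interleaving construction exploiting that $\omega_1$ is regular, so that the supremum of any countable set of countable ordinals is again countable. Given an arbitrary $\beta < \omega_1$, I must produce an element of $D$ strictly above $\beta$. First I would build, by simultaneous recursion, an increasing sequence of countable ordinals. Start by choosing, for each $n$, an element $\gamma_0^{(n)} \in D_n$ with $\gamma_0^{(n)} > \beta$, using unboundedness of $D_n$; let $\alpha_0 := \sup_n \gamma_0^{(n)}$, which is a countable ordinal. Having defined $\alpha_k$, choose for each $n$ some $\gamma_{k+1}^{(n)} \in D_n$ with $\gamma_{k+1}^{(n)} > \alpha_k$, and set $\alpha_{k+1} := \sup_n \gamma_{k+1}^{(n)}$. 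This produces a strictly increasing sequence $\alpha_0 < \alpha_1 < \cdots$ of countable ordinals. Let $\alpha := \sup_k \alpha_k$; again $\alpha < \omega_1$ by regularity, and clearly $\alpha > \beta$.

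\medskip

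It remains to check $\alpha \in D_n$ for each fixed $n$. The point is that the subsequence $\gamma_1^{(n)}, \gamma_2^{(n)}, \dots$ lies in $D_n$ and is cofinal in $\alpha$: indeed $\gamma_{k+1}^{(n)} > \alpha_k$ and $\gamma_{k+1}^{(n)} \leq \alpha_{k+1} \leq \alpha$, so $\sup_k \gamma_{k}^{(n)} = \sup_k \alpha_k = \alpha$. Hence $\alpha$ is a limit point of $D_n$, and since $D_n$ is closed, $\alpha \in D_n$. As this holds for every $n$, we get $\alpha \in D$, and since $\alpha > \beta$ with $\beta$ arbitrary, $D$ is unbounded. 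Combined with closedness, $D$ is a club subset, completing the proof. The main obstacle, if any, is purely bookkeeping: one must interleave the $\omega$ sequences coming from the $D_n$ correctly so that the common supremum $\alpha$ is simultaneously a limit of points of each $D_n$; the regularity of $\omega_1$ (every countable supremum of countable ordinals is countable) is exactly what guarantees the construction stays below $\omega_1$ and is the one nontrivial set-theoretic input.
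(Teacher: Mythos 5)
Your proof is correct: the closedness half is indeed immediate, and your diagonal interleaving argument for unboundedness, resting on the regularity of $\omega_1$, is exactly the standard proof that the club filter on $\omega_1$ is countably complete. The paper itself offers no proof of Lemma \ref{countableinterclub} --- it simply cites \cite[Lemma~3.4]{jech} --- and your argument is essentially the one found in that reference, so there is nothing to reconcile between the two.
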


We recall that for Banach spaces $X$ and $Y$, whenever $u \in E \otimes F$
\begin{align}
\Vert u \Vert_{\epsilon} := \sup \left\lbrace \left\Vert \sum\limits_{i=1}^n \langle x_i, \varphi \rangle  y_i \right\Vert \, : \, u = \sum\limits_{i=1}^n x_i \otimes y_i, \, \varphi \in X^*, \, \Vert \varphi \Vert \leq 1 \right\rbrace
\end{align}
denotes the \textit{injective tensor norm on $X \otimes Y$}. The vector space $X \otimes Y$ endowed with the norm $\Vert \cdot \Vert_{\epsilon}$ is denoted by $X \otimes_{\epsilon} Y$. The completion of $X \otimes_{\epsilon} Y$ with respect to $\Vert \cdot \Vert_{\epsilon}$ is called the \textit{injective tensor product of $X$ and $Y$} and it is denoted by $X \hat{\otimes}_{\epsilon} Y$. It is well-known (see e.g. \cite[Proposition~3.2]{Ryan}) that for Banach spaces $X$, $Y$, $W$, $Z$ if $S \in \mathcal{B}(X,W)$ and $T \in \mathcal{B}(Y,Z)$ then there exists a unique $S \otimes_{\epsilon} T \in \mathcal{B}(X \hat{\otimes}_{\epsilon} Y, W \hat{\otimes}_{\epsilon} Z)$ such that for every $x \in X$, $y \in Y$ the identity $(S \otimes_{\epsilon} T)(x \otimes y) = (Sx) \otimes (Ty)$ holds. Then $\Vert S \otimes_{\epsilon} T \Vert = \Vert S \Vert \Vert T \Vert$.

It follows from \cite[Section~3.2]{Ryan} that for any Banach space $X$ the Banach space $C([0, \omega_1]; X)$ of continuous functions on $[0, \omega_1]$ with values in $X$ is isometrically isomorphic to the Banach space $C[0, \omega_1] \hat{\otimes}_{\epsilon} X$. The isometric isomorphism
\begin{align}
J : \, C[0, \omega_1] \hat{\otimes}_{\epsilon} X \rightarrow C([0, \omega_1]; X)
\end{align}
is given by
\begin{align}
(J(f \otimes x))(\alpha) = f(\alpha)x \quad (f \in C[0, \omega_1], \, x \in X, \, \alpha \in [0, \omega_1]).
\end{align}

\begin{definition}
Let $X$ be a non-zero Banach space. We define
\begin{align}
Y_X:= \lbrace F \in C([0, \omega_1];X) : \; F(\omega_1)=0 \rbrace. 
\end{align}
\end{definition}

Although we shall not need this, we remark in passing that it follows from the Hahn--Banach Separation Theorem that $C_0[0, \omega_1) \hat{\otimes}_{\epsilon} X$ and $Y_X$ are isometrically isomorphic.

\begin{lemma}\label{ycompl}
Let $X$ be a non-zero Banach space. Then $Y_X$ is a complemented subspace of $C([0, \omega_1];X)$.
\end{lemma}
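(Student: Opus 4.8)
The plan is to exhibit an explicit bounded projection from $C([0,\omega_1];X)$ onto $Y_X$. The subspace $Y_X$ consists precisely of those continuous functions which vanish at the endpoint $\omega_1$, so the natural candidate is the map that subtracts off the value at $\omega_1$, namely
\begin{align}
\Pi : \; C([0,\omega_1];X) \rightarrow C([0,\omega_1];X); \quad (\Pi F)(\alpha) = F(\alpha) - F(\omega_1).
\end{align}
First I would verify that $\Pi F$ is genuinely an element of $C([0,\omega_1];X)$: it is continuous as the difference of the continuous function $F$ and the constant function with value $F(\omega_1)$, and moreover $(\Pi F)(\omega_1) = F(\omega_1) - F(\omega_1) = 0$, so $\Pi F \in Y_X$. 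Thus $\Pi$ maps into $Y_X$.

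Next I would check that $\Pi$ is a bounded linear idempotent whose range is exactly $Y_X$. Linearity is immediate from the pointwise definition. For idempotency, if $G \in Y_X$ then $G(\omega_1) = 0$, so $(\Pi G)(\alpha) = G(\alpha) - G(\omega_1) = G(\alpha)$, giving $\Pi G = G$; applying this with $G = \Pi F$ (which lies in $Y_X$ by the previous paragraph) yields $\Pi^2 = \Pi$. This simultaneously shows $\Ran(\Pi) = Y_X$, since every element of $Y_X$ is fixed by $\Pi$ and conversely $\Ran(\Pi) \subseteq Y_X$ was just established. For boundedness, the estimate
\begin{align}
\Vert \Pi F \Vert = \sup_{\alpha \leq \omega_1} \Vert F(\alpha) - F(\omega_1) \Vert \leq \sup_{\alpha \leq \omega_1} \Vert F(\alpha) \Vert + \Vert F(\omega_1) \Vert \leq 2 \Vert F \Vert
\end{align}
shows $\Vert \Pi \Vert \leq 2$. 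Since $\Pi$ is a bounded idempotent with range $Y_X$, it follows that $Y_X$ is complemented in $C([0,\omega_1];X)$, with complementary subspace $\Ker(\Pi)$ consisting of the constant functions.

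I do not expect any serious obstacle here; the argument is entirely elementary once the right projection is written down. The only point requiring a small amount of care is confirming that $Y_X$ is closed (so that ``complemented subspace'' is meaningful in the Banach-space sense), but this is automatic: $Y_X = \Ran(\Pi)$ is the range of a bounded idempotent, hence closed, and it equals the kernel of the bounded evaluation functional $F \mapsto F(\omega_1)$. The essential insight is simply to recognise $Y_X$ as the kernel of evaluation at $\omega_1$ and to split off that value via a constant function.
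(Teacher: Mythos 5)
Your proof is correct and coincides with the paper's own argument: the projection $\Pi F = F - c_{F(\omega_1)}$ (subtracting the constant function with value $F(\omega_1)$) is exactly the idempotent $Q$ used there, with the same norm bound $\Vert \Pi \Vert \leq 2$ and the same verification that it fixes $Y_X$ pointwise. No gaps.
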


\begin{proof}
For a fixed $x_0 \in X$ let us define the constant function
\begin{align}
c_{x_0} : \, [0, \omega_1] \rightarrow X; \quad \alpha \mapsto x_0,
\end{align}
obviously $c_{x_0} \in C([0, \omega_1]; X)$. Thus we can define the map
\begin{align}
Q: \, C([0, \omega_1];X) \rightarrow C([0, \omega_1];X);  \quad F \mapsto F - c_{F(\omega_1)}.
\end{align}
It is clear that $Q$ is a bounded linear map with $\Vert Q(F) \Vert \leq 2 \Vert F \Vert$. Now we observe that for any $F \in C([0, \omega_1];X)$ we clearly have $Q(F)(\omega_1)=0$, showing that $Q(F) \in Y_X$. Also, for any $F \in Y_X$ and any $\alpha \in [0, \omega_1]$ we have $(Q(F))(\alpha)=F(\alpha)$, consequently $Q$ is an idempotent with $\Ran(Q)= Y_X$ thus proving the claim.
\end{proof}

With the notations of the proof of Lemma \ref{ycompl}, we define
\begin{align}\label{complczero}
P: \; C[0,\omega_1] \rightarrow C[0, \omega_1], \quad g \mapsto g - c_{g(\omega_1)}.
\end{align}
In particular, $\Ran(P) = C_0[0, \omega_1)$.

\begin{remark}\label{idonyx}
Clearly for any $g \in C[0, \omega_1]$, $x \in X$ and $\alpha \in [0, \omega_1]$ we have $(Q(g \otimes x))(\alpha) = (Pg \otimes x)(\alpha)$. From this it follows that $(P \otimes_{\epsilon} I_X)Q(g \otimes x) = Pg \otimes x = Q(g \otimes x)$, thus by linearity and continuity we obtain
\begin{align}\label{identityonxy}
I_{Y_X} = (P \otimes_{\epsilon} I_X) \vert_{Y_X}.
\end{align}
\end{remark}

\begin{lemma}\label{suff}
Let $X$ be a non-zero Banach space and suppose $\mu, \xi \in (Y_X)^*$ satisfy $\langle f \otimes x, \xi \rangle = \langle f \otimes x, \mu \rangle$ for all $f \in C_0[0, \omega_1)$ and $x \in X$. Then $\xi = \mu$. 
\end{lemma}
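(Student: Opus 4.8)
The plan is to prove that the linear span
\[
V := \operatorname{span}\{\, f \otimes x : f \in C_0[0,\omega_1),\ x \in X \,\}
\]
is a \emph{dense} subspace of $Y_X$. Granting this, the conclusion is immediate: by bilinearity of the tensor map the hypothesis $\langle f \otimes x, \xi \rangle = \langle f \otimes x, \mu \rangle$ extends from elementary tensors to all of $V$, so $\xi$ and $\mu$ are two continuous functionals on $Y_X$ agreeing on a dense subspace, whence $\xi = \mu$. Thus the entire content of the lemma is the density statement, and the functional-analytic finish is routine.

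To establish density, I would first note that each $f \otimes x$ with $f \in C_0[0,\omega_1)$ really lies in $Y_X$, since under the identification $J$ one has $(f \otimes x)(\omega_1) = f(\omega_1)x = 0$. Next I would use that $C([0,\omega_1];X) \cong C[0,\omega_1] \hat{\otimes}_{\epsilon} X$ and that, by definition of the injective tensor product as a completion, the algebraic tensor product $C[0,\omega_1] \otimes X$ is dense in it. Hence an arbitrary $F \in Y_X \subseteq C([0,\omega_1];X)$ can be approximated in norm by finite sums $v_n = \sum_i g_i^{(n)} \otimes x_i^{(n)}$ with $g_i^{(n)} \in C[0,\omega_1]$ and $x_i^{(n)} \in X$.

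The key step is then to apply the bounded projection $Q$ of Lemma \ref{ycompl}, which satisfies $\Ran(Q) = Y_X$ and hence $Q\vert_{Y_X} = \id_{Y_X}$. Since $Q$ is continuous and $F \in Y_X$, we get $Q v_n \to Q F = F$. On the other hand, Remark \ref{idonyx} gives $Q(g \otimes x) = (Pg) \otimes x$, and $\Ran(P) = C_0[0,\omega_1)$, so each $Q v_n = \sum_i (P g_i^{(n)}) \otimes x_i^{(n)}$ is a finite sum of elementary tensors of exactly the form $f \otimes x$ with $f \in C_0[0,\omega_1)$; that is, $Q v_n \in V$. Therefore $F$ is a limit of elements of $V$, proving $\overline{V} = Y_X$.

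The main obstacle is precisely this density argument: one must transfer density of the full algebraic tensor product in $C([0,\omega_1];X)$ down to density of the ``vanishing at $\omega_1$'' tensors in $Y_X$, and the projection $Q$ together with the identity $Q(g \otimes x) = (Pg)\otimes x$ is the device that makes this work cleanly. I note that one could alternatively bypass $Q$ by invoking the isometric isomorphism $Y_X \cong C_0[0,\omega_1) \hat{\otimes}_{\epsilon} X$ recorded just before Lemma \ref{ycompl}, under which $V$ is literally the dense algebraic tensor product; but the route through $Q$ and Remark \ref{idonyx} is self-contained and avoids that identification.
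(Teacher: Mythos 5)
Your proof is correct and is essentially the paper's own argument, recast in primal form: the paper pulls the two functionals back along $Q$ to $C([0,\omega_1];X)$, checks agreement on elementary tensors $g \otimes x$ (using $Q(g\otimes x) = Pg \otimes x$ from Remark \ref{idonyx} plus the hypothesis), and concludes by density of the algebraic tensor product and surjectivity of $Q$ onto $Y_X$, whereas you push approximating tensors forward through $Q$ to prove density of $V$ in $Y_X$ and then let the functionals agree on that dense subspace. The ingredients (Lemma \ref{ycompl}, Remark \ref{idonyx}, density of $C[0,\omega_1] \otimes X$ in the injective tensor product, continuity) and their roles are identical, so the two proofs are dual phrasings of one and the same argument.
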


\begin{proof}
The definition of $P$ and the hypothesis of the lemma ensure that for any $x \in X$ and $g \in C[0, \omega_1]$ the equality $\langle Pg \otimes x, \xi \rangle = \langle Pg \otimes x, \mu \rangle$ holds. By Remark \ref{idonyx} we have $\langle Q(g \otimes x), \xi \rangle = \langle Q(g \otimes x), \mu \rangle$, equivalently, $\langle g \otimes x, (Q \vert^{Y_X})^* \xi \rangle = \langle g \otimes x, (Q \vert^{Y_X})^* \mu \rangle$ and thus by linearity and continuity of $(Q \vert^{Y_X})^* \mu$ and $(Q \vert^{Y_X})^* \xi$ we obtain that for all $u \in C([0, \omega_1];X)$ the identity $\langle u, (Q \vert^{Y_X})^* \xi \rangle = \langle u, (Q \vert^{Y_X})^* \mu \rangle$ holds. Thus for any $u \in C([0, \omega_1];X)$ we have $\langle Qu, \xi \rangle = \langle Qu, \mu \rangle$ consequently by Lemma \ref{ycompl} for all $v \in Y_X$ we have that $\langle v , \xi \rangle = \langle v, \mu \rangle$, proving the claim.
\end{proof}

\begin{remark}\label{nonsep}
Let $X$ be a non-zero Banach space. It is easy to see that $Y_X$ is not separable. Indeed, let $x_0 \in X$ be such that $\Vert x_0 \Vert = 1$ and let us define the map
\begin{align}
\iota: \; C_0[0, \omega_1) \rightarrow Y_X; \quad f \mapsto f \otimes x_0.
\end{align}
This is clearly a linear isometry, thus, since separability passes to subsets 
it follows that $Y_X$ cannot be separable.
\end{remark}

In the following, if $\alpha \leq \omega_1$ is an ordinal, then $\delta_{\alpha} \in C[0, \omega_1]^*$ denotes the \textit{Dirac measure} centred at $\alpha$; that is, the bounded linear functional defined by $\delta_{\alpha}(g):=g(\alpha)$ for $g \in C[0, \omega_1]$.

\begin{remark}
Let $X$ be a non-zero Banach space and let $\alpha \in [0, \omega_1]$ and $\psi \in X^*$ be fixed. We can define a map by
\begin{align}
\delta_{\alpha} \otimes \psi : \; C([0, \omega_1];X) \rightarrow \mathbb{C}; \quad u \mapsto \langle u(\alpha), \psi \rangle,
\end{align}
clearly $\delta_{\alpha} \otimes \psi \in C([0, \omega_1];X)^*$.

Let us observe that $C[0, \omega_1]$ has the approximation property. By \cite[Theorem~6]{Rudinatom} we know that $C[0, \omega_1]^*$ is isometrically isomorphic to $\ell_1(\omega_1^+)$, which has the Radon-Nikod\'ym property, consequently by \cite[Theorem~5.33]{Ryan}, the Banach space $(C[0, \omega_1] \hat{\otimes}_{\epsilon} X)^*$ is isometrically isomorphic to $C[0, \omega_1]^* \hat{\otimes}_{\pi} X^*$, the projective tensor product of $C[0, \omega_1]^*$ and $X^*$ (see for example \cite[Section~2.1]{Ryan}). Equivalently, $C([0, \omega_1];X)^*$ is isometrically isomorphic to $\ell_1(\omega_1^+; X^*)$, the Banach space of summable transfinite sequences on $\omega_1^+$ with entries in $X^*$. This justifies the tensor notation in the definition of the functional $\delta_{\alpha} \otimes \psi$.
\end{remark}
	
\subsection{The construction}
	
Our main theorem relies on the following result of Kania, Koszmider, and Laustsen:

\begin{theorem}\cite[Theorem~1.5]{topdich}\label{lkk} 
For every $T \in \mathcal{B}(C_0[0, \omega_1))$ there exists a unique \\ $\varphi(T) \in \mathbb{C}$ such that there exists a club subset $D \subseteq [0, \omega_1)$ such that for all $f \in C[0, \omega_1)$ and $\alpha \in D$:
\begin{align}
(Tf)(\alpha) = \varphi(T)f(\alpha).
\end{align}
Moreover, $\varphi: \, \mathcal{B}(C_0[0, \omega_1)) \rightarrow \mathbb{C}; \; T \mapsto \varphi(T)$ is a character.
\end{theorem}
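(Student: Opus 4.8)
The plan is to study $T$ through the transfinite family of functionals $\mu_\alpha := T^*\delta_\alpha \in C_0[0,\omega_1)^*$ for $\alpha < \omega_1$, so that $(Tf)(\alpha) = \langle f, \mu_\alpha \rangle$ for every $f \in C_0[0,\omega_1)$. By Rudin's theorem each $\mu_\alpha$ is a purely atomic, summable measure supported on a countable subset of $[0,\omega_1)$ (an atom at $\omega_1$ would act as zero on $C_0[0,\omega_1)$), with $\Vert \mu_\alpha \Vert \le \Vert T \Vert$; write $s(\alpha) := \sup(\operatorname{supp} \mu_\alpha) < \omega_1$. Two observations drive the argument. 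First, since $Tf$ is continuous on the compact space $[0,\omega_1]$, at every limit ordinal $\alpha$ one has $\langle f, \mu_\alpha \rangle = \lim_{\beta \uparrow \alpha} \langle f, \mu_\beta \rangle$, i.e. $\mu_\alpha$ is the weak$^*$-limit of $(\mu_\beta)_{\beta < \alpha}$. Second, for each countable $\gamma$ the function $T\mathbf{1}_{[0,\gamma]}$ lies in $C_0[0,\omega_1)$ and is therefore eventually zero; let $r(\gamma) < \omega_1$ be a bound past which it vanishes.

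First I would close off under the map $\gamma \mapsto \max\{ r(\gamma), s(\gamma) \}$: the set of $\alpha$ for which $r(\gamma), s(\gamma) < \alpha$ whenever $\gamma < \alpha$ is a club, and by Lemma \ref{countableinterclub} its intersection with the club of limit ordinals is a club $D$. Fix $\alpha \in D$. On the one hand, $\alpha > r(\gamma)$ for every $\gamma < \alpha$ gives $\mu_\alpha([0,\gamma]) = (T\mathbf{1}_{[0,\gamma]})(\alpha) = 0$, so $\mu_\alpha$ carries no mass on $[0,\alpha)$. On the other hand --- and this is the crux --- for $\delta$ with $\alpha < \delta < \omega_1$ the indicator $\mathbf{1}_{(\alpha,\delta]} = \mathbf{1}_{[0,\delta]} - \mathbf{1}_{[0,\alpha]}$ is continuous, and for $\beta < \alpha$ in $D$ we have $\operatorname{supp} \mu_\beta \subseteq [0, s(\beta)] \subseteq [0,\alpha)$, whence $\mu_\beta((\alpha,\delta]) = 0$; weak$^*$-continuity then forces $\mu_\alpha((\alpha,\delta]) = \lim_{\beta \uparrow \alpha} \mu_\beta((\alpha,\delta]) = 0$. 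Thus $\mu_\alpha$ carries no mass strictly above $\alpha$ either, so $\mu_\alpha = \lambda_\alpha \delta_\alpha$ where $\lambda_\alpha := \mu_\alpha(\{\alpha\}) = (T\mathbf{1}_{[0,\gamma]})(\alpha)$ for any $\gamma \ge \alpha$.

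Next I would show that $\alpha \mapsto \lambda_\alpha$ is eventually constant on $D$. For $\beta \le \alpha$ in $D$ one has $\lambda_\beta = (T\mathbf{1}_{[0,\alpha]})(\beta)$, so continuity of $T\mathbf{1}_{[0,\alpha]}$ at the limit ordinal $\alpha$ gives $\lambda_\beta \to \lambda_\alpha$ as $\beta \uparrow \alpha$ in $D$; hence $\lambda$ is continuous on $D$. Since every continuous scalar function on a club (order-isomorphic to $[0,\omega_1)$) is eventually constant by the regularity of $\omega_1$, there are $\alpha_0 < \omega_1$ and $c \in \mathbb{C}$ with $\lambda_\alpha = c$ for all $\alpha \in D$ with $\alpha \ge \alpha_0$. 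Setting $\varphi(T) := c$ and passing to the club $D \cap (\alpha_0, \omega_1)$ yields $(Tf)(\alpha) = \varphi(T) f(\alpha)$ for all such $\alpha$ and all $f$. Uniqueness of $\varphi(T)$ is immediate, since any two admissible scalars must agree at any $\alpha$ lying in the intersection of their witnessing clubs, as tested against $\mathbf{1}_{[0,\alpha]}$, whose value at $\alpha$ is $1$.

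Finally, that $\varphi$ is a character follows by intersecting clubs: linearity is clear; multiplicativity comes from applying the defining identity of $S$ to the function $Tf$ and that of $T$ to $f$ on the common club $D_S \cap D_T$, giving $(STf)(\alpha) = \varphi(S)\varphi(T) f(\alpha)$; and $\varphi(I_{C_0[0,\omega_1)}) = 1$ is obvious. I expect the genuinely delicate point to be precisely the weak$^*$-continuity step that annihilates the mass of $\mu_\alpha$ \emph{above} the diagonal, as the localisation below the diagonal is comparatively soft; everything else is bookkeeping with clubs and the standard eventual-constancy of continuous functions on $[0,\omega_1)$.
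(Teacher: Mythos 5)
This theorem is never proved in the paper: it is imported verbatim from Kania--Koszmider--Laustsen \cite[Theorem~1.5]{topdich}, where it is deduced from their general weak$^*$-topological dichotomy for transfinite sequences in duals of Banach spaces. Your argument is therefore necessarily a different route, and it is essentially correct: you analyse the transfinite matrix of $T$ directly, using Rudin's theorem to represent each $\mu_\alpha = T^*\delta_\alpha$ as a countably supported $\ell_1$-measure on $[0,\omega_1)$, closing off under the two ordinal functions $s$ (support bound) and $r$ (vanishing bound of $T\mathbf{1}_{[0,\gamma]}$) to obtain a club of limit ordinals on which $\mu_\alpha$ is concentrated at $\lbrace \alpha \rbrace$, and then invoking the classical fact that a continuous scalar function on a club (which, being closed and of order type $\omega_1$, is homeomorphic to $[0,\omega_1)$ via the order isomorphism) is eventually constant. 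This is in the spirit of the original Loy--Willis/Alspach--Benyamini matrix arguments; it buys a self-contained, elementary proof, whereas the route in \cite{topdich} buys generality, since the dichotomy there applies to arbitrary bounded weak$^*$-convergent transfinite sequences in dual spaces and has further applications. One point to tidy: in your ``crux'' step you establish $\mu_\beta((\alpha,\delta])=0$ only for $\beta<\alpha$ with $\beta\in D$ and then let $\beta\uparrow\alpha$; as written this needs $D\cap\alpha$ to be cofinal in $\alpha$, which can fail (for instance at $\alpha=\min D$, or at any point of $D$ isolated in $D$ from below). The restriction to $D$ is, however, unnecessary: membership of $\alpha$ in $D$ already gives $s(\beta)<\alpha$ for \emph{every} $\beta<\alpha$, so $\mu_\beta((\alpha,\delta])=0$ for all $\beta<\alpha$, and continuity of $T\mathbf{1}_{(\alpha,\delta]}$ at the limit ordinal $\alpha$ finishes the step; alternatively, pass from $D$ to the club of its limit points. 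With that rewording the proof is complete.
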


In \cite{topdich} the character $\varphi: \, \mathcal{B}(C_0[0, \omega_1)) \rightarrow \mathbb{C}$ of the previous theorem is termed the \textit{Alspach--Benyamini character} and its kernel the \textit{Loy--Willis ideal} of $\mathcal{B}(C_0[0, \omega_1))$, and is denoted by $\mathcal{M}_{LW}$. Partial structure of the lattice of closed two-sided ideals of $\mathcal{B}(C_0[0, \omega_1))$ is given in \cite{kl2}, in particular $\mathcal{E}(C_0[0, \omega_1)) = \mathcal{K}(C_0[0, \omega_1)) \subsetneq \mathcal{M}_{LW}$.

\begin{proof}[Proof of Theorem \ref{ourmain}]
Fix $S \in \mathcal{B}(Y_X)$, $x \in X$ and $\psi \in X^*$. For any $f \in C_0[0, \omega_1)$ we can define the map
\begin{align}\label{sectionsofop}
S_x^{\psi}f: \; [0, \omega_1] \rightarrow \mathbb{C}; \quad \alpha \mapsto \langle (S(f \otimes x))(\alpha), \psi \rangle.
\end{align}
It is clear that $S_x^{\psi}f$ is a continuous map, moreover by $S(f \otimes x) \in Y_X$ we also have $(S_x^{\psi}f)(\omega_1)=0$, consequently $S_x^{\psi}f \in C_0[0, \omega_1)$. This allows us to define the map
\begin{align}
S_x^{\psi}: \; C_0[0, \omega_1) \rightarrow C_0[0, \omega_1); \quad f \mapsto S_x^{\psi}f.
\end{align}
It is clear that $S_x^{\psi}$ is a linear map with $\Vert S_x^{\psi} \Vert \leq \Vert S \Vert \Vert x \Vert \Vert \psi \Vert$. Consequently, by Theorem \ref{lkk} there exists a club subset $D_{x, \psi} \subseteq [0, \omega_1)$ such that for all $\alpha \in D_{x, \psi}$ the equality 
\begin{align}
(S_x^{\psi})^* \delta_{\alpha} = \varphi(S_x^{\psi}) \delta_{\alpha}
\end{align}
holds. We also have $\vert \varphi(S_x^{\psi}) \vert \leq \Vert S \Vert \Vert x \Vert \Vert \psi \Vert$, since $\Vert \varphi \Vert = 1$.
This allows us to define the map
\begin{align}
\tilde{\Theta}_S: X \times X^* \rightarrow \mathbb{C}; \quad (x, \psi) \mapsto \varphi(S_x^{\psi}),
\end{align}
and we have for any $x \in X$ and $\psi \in X^*$ that $\vert \tilde{\Theta}_S(x, \psi) \vert \leq \Vert S \Vert \Vert x \Vert \Vert \psi \Vert$. Now we show that $\tilde{\Theta}_S$ is bilinear. We only check that it is linear in the first variable, linearity in the second variable follows by an analogous argument. Let $x,y \in X$, $\psi \in X^*$ and $\lambda \in \mathbb{C}$ be arbitrary. Fix $f \in C_0[0, \omega_1)$ and $\alpha \in [0, \omega_1]$, then using linearity of the tensor product in the second variable, of $S$ and of the functional $\psi$ it follows that 
\begin{align}
(S_{x + \lambda y}^{\psi} f)(\alpha) &= \langle (S(f \otimes (x + \lambda y)))(\alpha), \psi \rangle \notag \\
&= \langle (S(f \otimes x))(\alpha), \psi \rangle + \lambda \langle (S(f \otimes y))(\alpha), \psi \rangle \notag \\
&= (S_x^{\psi} f)(\alpha) + \lambda (S_y^{\psi} f)(\alpha),
\end{align}
proving $S_{x + \lambda y}^{\psi} = S_x + \lambda S_y^{\psi}$. Since $\varphi$ is linear, $\tilde{\Theta}_S(x+ \lambda y, \psi) = \varphi(S_{x + \lambda y}^{\psi}) =  \varphi(S_x^{\psi}) + \lambda \varphi(S_y^{\psi}) = \tilde{\Theta}_S(x, \psi) + \lambda \tilde{\Theta}_S(y, \psi)$ readily follows, proving linearity of $\tilde{\Theta}_S$ in the first variable. Consequently $\tilde{\Theta}_S$ is a bounded bilinear form on $X \times X^*$. If $\kappa_X: \, X \rightarrow X^{**}$ denotes the canonical embedding then by reflexivity of $X$ the map
\begin{align}
\Theta_S: \; X \rightarrow X; \quad x \mapsto \kappa_X^{-1}(\tilde{\Theta}_S(x, \cdot))
\end{align}
defines a bounded linear operator on $X$ with $\Vert \Theta_S \Vert = \Vert \tilde{\Theta}_S \Vert$ and $\langle \Theta_S (x), \psi \rangle = \tilde{\Theta}_S(x, \psi) = \varphi(S_x^{\psi})$ for all $x \in X$, $\psi \in X^*$.
Thus we can define the map
\begin{align}
\Theta: \; \mathcal{B}(Y_X) \rightarrow \mathcal{B}(X); \quad S \mapsto \Theta_S.
\end{align}
Since $X$ is separable and reflexive it follows that $X^*$ is separable too. Let $\mathcal{Q} \subseteq X$ and $\mathcal{R} \subseteq X^*$ be countable dense subsets. Let us fix $S \in \mathcal{B}(Y_X)$, $x \in \mathcal{Q}$ and $\psi \in \mathcal{R}$. As above, there exists a club subset $D^S_{x, \psi} \subseteq [0, \omega_1)$ such that for any $\alpha \in D^S_{x,\psi}$ and any $f \in C_0[0, \omega_1)$: $(S_x^{\psi}f)(\alpha) = \varphi(S_x^{\psi})f(\alpha)$ and hence
\begin{align}\label{randomeq1}
\langle S(f \otimes x), \delta_{\alpha} \otimes \psi \rangle &= \langle (S(f \otimes x))(\alpha), \psi \rangle = (S_x^{\psi}f)(\alpha) \notag \\
&= f(\alpha) \varphi(S_x^{\psi}) = \langle f(\alpha) \Theta(S)x, \psi \rangle \notag \\
&= \langle f \otimes (\Theta(S)x), \delta_{\alpha} \otimes \psi \rangle.
\end{align}
By Lemma \ref{countableinterclub} it follows that 
\begin{align}
D^S:= \bigcap\limits_{(x,\psi) \in \mathcal{Q} \times \mathcal{R}} D^S_{x, \psi}
\end{align}
is a club subset of $[0, \omega_1)$. Consequently for any $\alpha \in D^S$, any $f \in C_0[0, \omega_1)$ and any $x \in \mathcal{Q}$, $\psi \in \mathcal{R}$, Equation (\ref{randomeq1}) holds.
It is clear that for a fixed $S \in \mathcal{B}(Y_X)$, $f \in C_0[0, \omega_1)$ and $\alpha \in D^S$ the maps
\begin{align}
X& \times X^* \rightarrow \mathbb{C}; \quad (x, \psi) \mapsto \langle S(f \otimes x), \delta_{\alpha} \otimes \psi \rangle, \notag \\
X& \times X^* \rightarrow \mathbb{C}; \quad (x, \psi) \mapsto \langle f \otimes (\Theta(S)x), \delta_{\alpha} \otimes \psi \rangle
\end{align}
are continuous functions between metric spaces and thus by density of $\mathcal{Q} \times \mathcal{R}$ in $X \times X^*$, Equation (\ref{randomeq1}) holds everywhere on $X \times X^*$. In other words, for any $S \in \mathcal{B}(Y_X)$ there exists a club subset $D^S \subseteq [0, \omega_1)$ such that for any $\alpha \in D^S$, $f \in C_0[0, \omega_1)$ and $x \in X$, $\psi \in X^*$
\begin{align}\label{almostfinalformula}
\langle f \otimes x, S^*(\delta_{\alpha} \otimes \psi) \rangle = \langle f \otimes x, \delta_{\alpha} \otimes (\Theta(S)^* \psi) \rangle
\end{align}
holds. Therefore by Lemma \ref{suff} we obtain that for all $\alpha \in D^S$ and $\psi \in X^*$:
\begin{align}\label{finalformula}
S^*(\delta_{\alpha} \otimes \psi) = \delta_{\alpha} \otimes (\Theta(S)^* \psi).
\end{align}
We show that for any $S \in \mathcal{B}(Y_X)$ the operator $\Theta(S)$ is determined by this property. Indeed, suppose $\Theta_1(S),\Theta_2(S) \in \mathcal{B}(X)$ are such that there exist club subsets $D^S_1, D^S_2 \subseteq [0, \omega_1)$ such that for $i \in \lbrace 1,2 \rbrace$, all $\alpha \in D^S_i$ and all $\psi \in X^*$
\begin{align}
S^*(\delta_{\alpha} \otimes \psi) = \delta_{\alpha} \otimes (\Theta_i(S)^* \psi).
\end{align}
Let $\alpha \in D^S_1 \cap D^S_2$, $x \in X$ and $\psi \in X^*$ be fixed. Then
\begin{align}
\langle \Theta_1(S)x, \psi \rangle &= \langle \mathbf{1}_{[0, \alpha]} \otimes x, \delta_{\alpha} \otimes (\Theta_1(S)^* \psi) \rangle \notag \\
&= \langle \mathbf{1}_{[0, \alpha]} \otimes x, S^*(\delta_{\alpha} \otimes \psi) \rangle \notag \\
&= \langle \mathbf{1}_{[0, \alpha]} \otimes x, \delta_{\alpha} \otimes (\Theta_2(S)^* \psi) \rangle \notag \\
&= \langle \Theta_2(S)x, \psi \rangle
\end{align}
and thus $\Theta_1(S)= \Theta_2(S)$.
We are now prepared to prove that $\Theta$ is an algebra homomorphism. To see this let $S,T \in \mathcal{B}(Y_X)$ be fixed. Let $D^T, D^S, D^{TS} \subseteq [0, \omega_1)$ be club subsets satisfying Equation (\ref{finalformula}). To see multiplicativity, let $\alpha \in D^T \cap D^S \cap D^{TS}$, $x \in X$ and $\psi \in X^*$ be arbitrary. Then we obtain:
\begin{align}
\delta_{\alpha} \otimes (\Theta(TS)^* \psi) &=
(TS)^* (\delta_{\alpha} \otimes \psi) = S^*T^* (\delta_{\alpha} \otimes \psi) \notag \\
&= S^* (\delta_{\alpha} \otimes (\Theta(T)^* \psi)) \notag \\
&= \delta_{\alpha} \otimes (\Theta(S)^* \Theta(T)^* \psi) \notag \\
&= \delta_{\alpha} \otimes ((\Theta(T) \Theta(S))^* \psi),
\end{align}
hence $\Theta(TS)^* \psi = (\Theta(T) \Theta(S))^* \psi$, so $\Theta(TS)^* = (\Theta(T) \Theta(S))^*$, equivalently \\
$\Theta(TS)= \Theta(T) \Theta(S)$.

Linearity can be shown with analogous reasoning.

For any $S \in \mathcal{B}(Y_X)$ we have $\Vert \Theta(S) \Vert = \Vert \tilde{\Theta}_S \Vert \leq \Vert S \Vert$, thus $\Vert \Theta \Vert \leq 1$. \\
We now show that $\Theta$ has a right inverse. Let $P \in \mathcal{B}(C[0, \omega_1])$ be the idempotent operator as in Equation (\ref{complczero}). Let us fix an $A \in \mathcal{B}(X)$. We observe that $S:= (P \otimes_{\epsilon} A) \vert_{Y_X}$ belongs to $\mathcal{B}(Y_X)$. Indeed, for any $g \in C[0, \omega_1]$ and $x \in X$ the identity $((P \otimes_{\epsilon} A)(g \otimes x))(\omega_1)=(Pg)(\omega_1)Ax=0$ holds plainly because $Pg \in C_0[0, \omega_1)$; thus by linearity and continuity of $P \otimes_{\epsilon} A$ in fact $((P \otimes_{\epsilon} A)u)(\omega_1)=0$ for all $u \in C[0, \omega_1] \hat{\otimes}_{\epsilon} X$. This shows that $S \in \mathcal{B}(Y_X)$ and therefore there exists a club subset $D^S \subseteq [0, \omega_1)$ such that Equation (\ref{finalformula}) is satisfied for all $\alpha \in D^S$ and all $\psi \in X^*$. Fix $\alpha \in D^S$. For any $x \in X$ and $\psi \in X^*$ 
\begin{align}
\langle Ax, \psi \rangle &= \langle \mathbf{1}_{[0, \alpha]} \otimes (Ax), \delta_{\alpha} \otimes \psi \rangle = \langle (P \otimes_{\epsilon} A)(\mathbf{1}_{[0, \alpha]} \otimes x), \delta_{\alpha} \otimes \psi \rangle \notag \\
&= \langle \mathbf{1}_{[0, \alpha]} \otimes x, S^*(\delta_{\alpha} \otimes \psi) \rangle \notag \\
&= \langle \mathbf{1}_{[0, \alpha]} \otimes x, \delta_{\alpha} \otimes (\Theta(S)^* \psi) \rangle \notag \\
&= \langle x, \Theta(S)^* \psi \rangle \notag \\
&= \langle \Theta(S)x, \psi \rangle,
\end{align}
and thus $\Theta(S) = A$. In particular, we obtain $\Theta(I_{Y_X}) = I_X$, with $\Vert \Theta \Vert \leq 1$ this yields $\Vert \Theta \Vert =1$. Also, the above shows that the map
\begin{align}
\Lambda : \; \mathcal{B}(X) \rightarrow \mathcal{B}(Y_X); \quad A \mapsto (P \otimes_{\epsilon} A) \vert_{Y_X}
\end{align}
satisfies $\Theta \circ \Lambda = \id_{\mathcal{B}(X)}$. It is immediate that $\Lambda$ is linear with $\Vert \Lambda \Vert \leq 1$. Also, $\Lambda(I_X) = I_{Y_X}$ holds by Equation (\ref{identityonxy}), consequently $\Vert \Lambda \Vert =1$. The map $\Lambda$ is an algebra homomorphism plainly because $P \in \mathcal{B}(C_0[0, \omega_1))$ is an idempotent, therefore $(P \otimes_{\epsilon} A)(P \otimes_{\epsilon}B) = P \otimes_{\epsilon} (AB)$ for every $A,B \in \mathcal{B}(X)$.

It remains to prove that $\Theta$ is not injective. For assume towards a contradiction it is; then $\mathcal{B}(Y_X)$ and $\mathcal{B}(X)$ are isomorphic as Banach algebras. By Eidelheit's Theorem this is equivalent to saying that $Y_X$ and $X$ are isomorphic as Banach spaces. This is clearly nonsense, since for example, $X$ is separable whereas by Remark \ref{nonsep} the Banach space $Y_X$ is not.
\end{proof}

\begin{remark}\label{lwideal}
With the notations established in the proof of Theorem \ref{ourmain} we clearly have in fact
\begin{align}
\Ker(\Theta)= \lbrace S \in \mathcal{B}(Y_X) \, : \, (\forall x \in X)(\forall \psi \in X^*)(S_x^{\psi} \in \mathcal{M}_{LW}) \rbrace,
\end{align}
where $S_x^{\psi}$ is defined by (\ref{sectionsofop}).
\end{remark}

If $X$ is an infinite-dimensional Banach space then $\Ker(\Theta)$ is of course not maximal in $\mathcal{B}(Y_X)$, however, it is not the smallest possible ideal in $\mathcal{B}(Y_X)$. To see this, we need some preliminary observations.

In the following, let $P\in \mathcal{B}(C[0, \omega_1])$ be as in Equation (\ref{complczero}). If $X$ is a non-zero Banach space, we fix $x_0 \in X$ and $\xi \in X^*$ such that $\Vert x_0 \Vert = \Vert \xi \Vert = \langle x_0, \xi \rangle = 1$ and consider the linear isometry
\begin{align}
\iota: \; C_0[0, \omega_1) \rightarrow Y_X; \quad f \mapsto f \otimes x_0.
\end{align}
We also consider the norm one linear map
\begin{align}
\rho: \; C[0, \omega_1] \hat{\otimes}_{\epsilon} X \rightarrow C[0, \omega_1]
\end{align}
which is unique with the property that for any $g \in C[0, \omega_1]$ and $x \in X$ the identity \\ $\rho(g \otimes x) = \langle x, \xi \rangle g$ holds. With this we obtain the following:

\begin{lemma}\label{admiss}
Let $X$ be a non-zero Banach space. Then
\begin{align}
&\Xi: \; \mathcal{B}(C_0[0, \omega_1)) \rightarrow \mathcal{B}(Y_X); \quad S \mapsto \big( (P \vert_{C_0[0, \omega_1)} \circ S \circ P \vert^{C_0[0, \omega_1)}) \otimes_{\epsilon} I_X \big) \vert_{Y_X} \\
&\Upsilon: \; \mathcal{B}(Y_X) \rightarrow \mathcal{B}(C_0[0, \omega_1)); \quad T \mapsto P \vert^{C_0[0, \omega_1)} \circ \rho \vert_{Y_X} \circ T \circ \iota
\end{align}
define norm one linear maps with $\Upsilon \circ \Xi = \id_{\mathcal{B}(C_0[0, \omega_1))}$. Moreover, $\Xi$ is an algebra homomorphism such that $(\Xi(S))_x^{\psi} = \langle x , \psi \rangle S$ for every $x \in X$ and $\psi \in X^*$.
\end{lemma}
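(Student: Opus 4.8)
The plan is to recognise $\Xi$ as a conjugate of the amplification map $S \mapsto S \otimes_{\epsilon} I_X$ by an isometric isomorphism, and to treat $\Upsilon$ by exploiting that $\rho$ already annihilates the ``constant part'' that $P$ is designed to remove. Write $C_0 := C_0[0, \omega_1)$ for brevity. The structural fact underpinning everything is that $V := (P\vert_{C_0} \otimes_{\epsilon} I_X)$, viewed as a map into $Y_X$, is an isometric isomorphism $C_0 \hat{\otimes}_{\epsilon} X \to Y_X$: indeed $P\vert_{C_0}$ is the isometric inclusion $C_0 \hookrightarrow C[0,\omega_1]$, so $V$ is precisely the isometric isomorphism recorded in the remark following the definition of $Y_X$, and its inverse is $(P\vert^{C_0} \otimes_{\epsilon} I_X)\vert_{Y_X}$ (the composite $V \circ (P\vert^{C_0}\otimes_\epsilon I_X)\vert_{Y_X}$ equals $I_{Y_X}$ by Equation \eqref{identityonxy}, while $(P\vert^{C_0}\otimes_\epsilon I_X)\vert_{Y_X} \circ V$ is the identity on $C_0 \hat{\otimes}_{\epsilon} X$ since $P\vert^{C_0}\circ P\vert_{C_0} = I_{C_0}$).

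Granting this, I would first note, via functoriality of $\otimes_{\epsilon}$ (namely $(AB)\otimes_\epsilon I_X = (A\otimes_\epsilon I_X)(B\otimes_\epsilon I_X)$), that the operator $\big((P\vert_{C_0}\circ S \circ P\vert^{C_0})\otimes_\epsilon I_X\big)\vert_{Y_X}$ defining $\Xi(S)$ equals $V\,(S \otimes_{\epsilon} I_X)\,V^{-1}$. This single identity does most of the work: it shows at once that $\Xi(S) \in \mathcal{B}(Y_X)$ is well defined, that $\|\Xi(S)\| = \|V(S\otimes_\epsilon I_X)V^{-1}\| \le \|S \otimes_{\epsilon} I_X\| = \|S\|$ (as $V$ is an isometric isomorphism), so $\|\Xi\| \le 1$, and — since $S \mapsto S \otimes_{\epsilon} I_X$ is a unital algebra homomorphism $\mathcal{B}(C_0) \to \mathcal{B}(C_0 \hat{\otimes}_{\epsilon} X)$ and conjugation by $V$ is an algebra isomorphism — that $\Xi$ is a unital algebra homomorphism.

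For $\Upsilon$ the naive estimate only gives $\|\Upsilon\| \le 2$, because $\|P\| = 2$. The point I would isolate is that $\rho$ maps $Y_X$ into $C_0$: checking on elementary tensors gives $\rho(u)(\omega_1) = \langle u(\omega_1), \xi\rangle$, which vanishes for $u \in Y_X$, and this extends by linearity and continuity. Hence $P\vert^{C_0} \circ \rho\vert_{Y_X}$ is just $\rho\vert_{Y_X}$ with codomain corestricted to $C_0$ (as $P$ fixes $C_0$ pointwise), of norm at most $\|\rho\| = 1$; since $\iota$ is an isometry, $\|\Upsilon\| \le 1$. The relation $\Upsilon \circ \Xi = \id_{\mathcal{B}(C_0)}$ is then a direct computation on $f \in C_0$: one has $\iota(f) = f \otimes x_0$, then $\Xi(S)(f\otimes x_0) = (Sf)\otimes x_0$ (here $P\vert^{C_0}f = f$ because $f \in \Ran(P)$, and $Sf \in C_0$), then $\rho((Sf)\otimes x_0) = \langle x_0,\xi\rangle Sf = Sf$, and finally $P\vert^{C_0}(Sf) = Sf$. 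With $\Upsilon \circ \Xi = \id$ together with $\|\Xi\|\le 1$ and $\|\Upsilon\| \le 1$ in hand, the estimate $\|S\| = \|\Upsilon\Xi(S)\| \le \|\Upsilon\|\,\|\Xi\|\,\|S\|$ forces $\|\Xi\| = \|\Upsilon\| = 1$.

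It remains to verify $(\Xi(S))_x^{\psi} = \langle x,\psi\rangle S$, with $(\cdot)_x^{\psi}$ as in Equation \eqref{sectionsofop} of the proof of Theorem \ref{ourmain}. The computation above gives $\Xi(S)(f \otimes x) = (Sf) \otimes x$ for every $f \in C_0$ and $x \in X$, whence for $\alpha \in [0,\omega_1]$ one has $((\Xi(S))_x^{\psi} f)(\alpha) = \langle (Sf)(\alpha)\,x, \psi\rangle = \langle x,\psi\rangle\,(Sf)(\alpha)$; as this holds for all $\alpha$ and all $f$, the claimed identity follows. The only genuine obstacle in the whole argument is pinning down the two norm-one bounds: both naive estimates lose a factor of $2$ coming from $\|P\| = 2$, and the proof hinges on the two observations that $V$ is isometric onto $Y_X$ (for $\Xi$) and that $\rho$ lands in $C_0$ (for $\Upsilon$), each of which removes the spurious factor.
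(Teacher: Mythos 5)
Your proof is correct, but it is organized around a different pivot than the paper's. The paper verifies every property of $\Xi$ and $\Upsilon$ directly on elementary tensors and obtains both norm-one statements from unitality: $\Xi(I_{C_0[0,\omega_1)}) = I_{Y_X}$ via Equation (\ref{identityonxy}), and $\Upsilon(I_{Y_X}) = I_{C_0[0,\omega_1)}$ from the definitions. You instead package well-definedness, the bound $\Vert \Xi \Vert \leq 1$, and multiplicativity into the single conjugation identity $\Xi(S) = V (S \otimes_{\epsilon} I_X) V^{-1}$, where (writing $C_0 := C_0[0,\omega_1)$) the operator $V = P\vert_{C_0} \otimes_{\epsilon} I_X$ is an isometric isomorphism of $C_0 \hat{\otimes}_{\epsilon} X$ onto $Y_X$ with inverse $(P\vert^{C_0} \otimes_{\epsilon} I_X)\vert_{Y_X}$, and you then recover $\Vert \Xi \Vert = \Vert \Upsilon \Vert = 1$ from the retraction identity $\Upsilon \circ \Xi = \id$ rather than from unitality. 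Notably, you thereby use --- and, via Equation (\ref{identityonxy}) together with $P\vert^{C_0} \circ P\vert_{C_0} = I_{C_0}$, in effect re-prove --- the isometric identification $C_0 \hat{\otimes}_{\epsilon} X \simeq Y_X$, which the paper records in a remark but explicitly declines to use. What your route buys is precision exactly where the paper says ``easy to see'': both naive norm estimates threaten to lose a factor of $2$ coming from $\Vert P \Vert = 2$, and your two observations --- that $V$ is isometric onto $Y_X$, and that $\rho$ maps $Y_X$ into $C_0$ so that the outer $P\vert^{C_0}$ in the definition of $\Upsilon$ acts as the identity --- are precisely what eliminate that factor. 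What the paper's route buys is brevity: it needs neither the surjectivity of $V$ nor the subspace-isometry (injectivity) property of the $\epsilon$-norm, only the two evaluations at the identity operators. The remaining computations --- $\Upsilon(\Xi(S))f = Sf$ on elementary tensors and $((\Xi(S))_x^{\psi} f)(\alpha) = (Sf)(\alpha)\langle x, \psi \rangle$ --- coincide in both arguments.
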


\begin{proof}
It is clear that $\big( (P \vert_{C_0[0, \omega_1)} \circ S \circ P \vert^{C_0[0, \omega_1)}) \otimes_{\epsilon} I_X \big) \vert_{Y_X} \in \mathcal{B}(Y_X)$ holds for any $S \in \mathcal{B}(C_0[0, \omega_1))$, thus $\Xi$ is well-defined. It is easy to see that $\Xi$ is linear with $\Vert \Xi \Vert \leq 1$. From Equation (\ref{identityonxy}) it follows that $\Xi(I_{C_0[0, \omega_1)}) = I_{Y_X}$, thus $\Vert \Xi \Vert =1$. The map $\Xi$ is multiplicative simply by the defining property of injective tensor products of operators. Let $S \in \mathcal{B}(C_0[0, \omega_1))$, $x \in X$ and $\psi \in X^*$ be fixed. Then for any $f \in C_0[0, \omega_1)$ and $\alpha \leq \omega_1$ ordinal
\begin{align}
\big( (\Xi(S))_x^{\psi}f \big)(\alpha) = \langle (\Xi(S)(f \otimes x))(\alpha), \psi \rangle = \langle (Sf)(\alpha)x, \psi \rangle = (Sf)(\alpha) \langle x, \psi \rangle,
\end{align}
thus $(\Xi(S))_x^{\psi} = \langle x , \psi \rangle S$ indeed.

Linearity of $\Upsilon$ is immediate, so is $\Vert \Upsilon \Vert \leq 1$. Since $\Upsilon(I_{Y_X})= I_{C_0[0, \omega_1)}$ follows from the definition of $\Upsilon$, we obtain $\Vert \Upsilon \Vert =1$ as required.

It remains to show that $\Upsilon \circ \Xi = \id_{\mathcal{B}(C_0[0, \omega_1))}$. For any $S \in \mathcal{B}(C_0[0, \omega_1))$  and $f \in C_0[0, \omega_1)$
\begin{align}
\Upsilon(\Xi(S))f &=(P \vert^{C_0[0, \omega_1)} \circ \rho \vert_{Y_X} \circ \Xi(S) \circ \iota)f \notag \\
&= (P \vert^{C_0[0, \omega_1)} \circ \rho \vert_{Y_X} \circ \Xi(S))(f \otimes x_0) \notag \\
&= (P \vert^{C_0[0, \omega_1)} \circ \rho \vert_{Y_X})(Sf \otimes x_0) \notag \\
&= P \vert^{C_0[0, \omega_1)} (\langle x_0, \xi \rangle Sf) \notag \\
&= Sf,
\end{align}
consequently $\Upsilon(\Xi(S)) = S$, which proves the claim.
\end{proof}

\begin{corollary} 
The containment $\mathcal{E}(Y_X) \subsetneq \Ker(\Theta)$ holds.
\end{corollary}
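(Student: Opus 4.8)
The plan is to prove the two assertions separately: first the inclusion $\mathcal{E}(Y_X) \subseteq \Ker(\Theta)$, and then its strictness.

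For the inclusion, recall from Theorem~\ref{ourmain} that $\Theta: \, \mathcal{B}(Y_X) \rightarrow \mathcal{B}(X)$ is a surjective algebra homomorphism of norm one (hence continuous) which is not injective, and that $\mathcal{B}(X)$ is semisimple. Thus Lemma~\ref{noninjbigkernel} applies verbatim with $B = \mathcal{B}(X)$ and yields $\mathcal{E}(Y_X) \subseteq \Ker(\Theta)$ immediately.

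To see that the inclusion is strict I would produce an operator lying in $\Ker(\Theta)$ but outside $\mathcal{E}(Y_X)$, and the natural candidate comes from the section map $\Xi$ of Lemma~\ref{admiss}. Recall that $\mathcal{E}(C_0[0, \omega_1)) = \mathcal{K}(C_0[0, \omega_1)) \subsetneq \mathcal{M}_{LW}$, so I may fix an operator $S \in \mathcal{M}_{LW} \setminus \mathcal{K}(C_0[0, \omega_1))$. By Lemma~\ref{admiss} we have $(\Xi(S))_x^{\psi} = \langle x, \psi \rangle S$ for all $x \in X$ and $\psi \in X^*$; since $\mathcal{M}_{LW} = \Ker(\varphi)$ is a linear subspace, each $(\Xi(S))_x^{\psi}$ lies in $\mathcal{M}_{LW}$, whence the description of $\Ker(\Theta)$ in Remark~\ref{lwideal} gives $\Xi(S) \in \Ker(\Theta)$.

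It remains to argue that $\Xi(S) \notin \mathcal{E}(Y_X)$, and this is the crux of the matter. The key leverage is the identity $\Upsilon \circ \Xi = \id_{\mathcal{B}(C_0[0, \omega_1))}$ from Lemma~\ref{admiss}: writing $A := \iota$ and $B := P \vert^{C_0[0, \omega_1)} \circ \rho \vert_{Y_X}$, we have $S = \Upsilon(\Xi(S)) = B\, \Xi(S)\, A$. Now if $\Xi(S)$ were inessential on $Y_X$, then, because the inessential operators form a closed operator ideal (in the sense of \cite{pi}, with $\mathcal{E}(Z)$ its single-space component), the composite $B\, \Xi(S)\, A$ would be an inessential operator on $C_0[0, \omega_1)$; that is, $S \in \mathcal{E}(C_0[0, \omega_1)) = \mathcal{K}(C_0[0, \omega_1))$, contradicting the choice of $S$. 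I expect this step to be the main obstacle, since one must transport inessentiality through the pair $(\Xi, \Upsilon)$ rather than through an algebra homomorphism ($\Upsilon$ is merely linear). Should one wish to avoid invoking the cross-space operator ideal, the same conclusion follows by a corner argument: a direct check using $\langle x_0, \xi \rangle = 1$ and $Pf = f$ for $f \in C_0[0, \omega_1)$ shows $BA = \id_{C_0[0, \omega_1)}$, so $E := AB$ is an idempotent of $\mathcal{B}(Y_X)$ with $A$ an isomorphism onto the complemented subspace $F := \Ran(E)$; then $E\, \Xi(S)\, E = A\, S\, B$ is inessential (as $\mathcal{E}(Y_X)$ is an ideal), and a block-triangular Fredholm computation shows that the inessentiality of a corner operator supported on $F$ is equivalent to that of its compression to $F$, which the isomorphism $A$ identifies with $S$ on $C_0[0, \omega_1)$. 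Either way $\Xi(S) \notin \mathcal{E}(Y_X)$, completing the proof that $\mathcal{E}(Y_X) \subsetneq \Ker(\Theta)$.
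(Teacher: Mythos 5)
Your proposal is correct and follows essentially the same route as the paper: the inclusion via Lemma~\ref{noninjbigkernel}, and strictness by transporting an operator $S \in \mathcal{M}_{LW} \setminus \mathcal{E}(C_0[0,\omega_1))$ through $\Xi$ into $\Ker(\Theta)$ and pulling inessentiality back via $\Upsilon \circ \Xi = \id$ together with the Pietsch operator-ideal property of $\mathcal{E}$. The only cosmetic difference is that you exhibit an explicit witness while the paper phrases the same computation as a proof by contradiction from the assumption $\Ker(\Theta) = \mathcal{E}(Y_X)$.
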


\begin{proof}
By Lemma \ref{noninjbigkernel} it follows that $\mathcal{E}(Y_X) \subseteq \Ker(\Theta)$, we show that the containment is proper. For assume towards a contradiction that $\Ker(\Theta)= \mathcal{E}(Y_X)$. If $S \in \mathcal{M}_{LW}$ then by Lemma \ref{admiss} for all $x \in X$ and $\psi \in X^*$ in fact $(\Xi(S))_x^{\psi} = \langle x, \psi \rangle S \in \mathcal{M}_{LW}$, thus by Remark \ref{lwideal} then $\Xi(S) \in \Ker(\Theta)$ follows. Thus $\Xi(S) \in \mathcal{E}(Y_X)$ by the indirect assumption and since $\mathcal{E}$ is an operator ideal in the sense of Pietsch, it follows from Lemma \ref{admiss} that
\begin{align}
S= \Upsilon(\Xi(S)) = P \vert^{C_0[0, \omega_1)} \circ \rho \vert_{Y_X} \circ \Xi(S) \circ \iota \in \mathcal{E}(C_0[0, \omega_1)).
\end{align}
This yields $\mathcal{M}_{LW} = \mathcal{E}(C_0[0, \omega_1))$, which is a contradiction.
\end{proof}

\subsection*{Acknowledgements}
The majority of the research presented herein was carried out during the author's Ph.D. studies, he is grateful to his supervisors Dr Yemon Choi and Dr Niels J. Laustsen (Lancaster) for their invaluable advice during the preparation of this paper. He is indebted to Dr Saeed Ghasemi (Prague), Dr Gy\"orgy P\'al Geh\'er (Reading), Professor Lajos Moln\'ar (Szeged), Dr Tam\'as Titkos, and Dr Zsigmond Tarcsay (Budapest) for many enlightening conversations. We would like to thank the anonymous referee for his/her many insightful comments, which helped to improve the presentation of the paper a great deal, and for drawing our attention to \cite{schlackow}. The author acknowledges the financial support from the Lancaster University Faculty of Science and Technology and acknowledges with thanks the partial funding received from GA\v{C}R project 19-07129Y; RVO 67985840 (Czech Republic).


\normalsize

\end{document}